\newcommand{\mathscripty}{\mathscr}
\newcommand{\rs}{\mathord{\upharpoonright}}
\newcommand{\e}{\epsilon}
\newcommand{\N}{\mathbb{N}}
\newcommand{\bA}{A}
\newcommand{\cQ}{\mathcal{Q}}
\newcommand{\eps}{\epsilon}
\newtheorem*{OCAT}{$\OCA$}
\newtheorem*{MAkappa}{$\mathbf{MA_\kappa}$}
\theoremstyle{Case1}
\theoremstyle{Case2}
\newcommand{\NN}{\mathbb{N}}
\newcommand{\PP}{\mathbb{P}}
\newcommand{\cstu}{\mathrm{C}^*_u}
\newcommand{\roeq}{\mathrm{Q}^*_u}
\newtheorem*{rigprob*}{Rigidity Problem for Uniform Roe Algebras}
\newtheorem*{rigprobcorona*}{Rigidity Problem for Uniform Roe Coronas}
\newcommand{\SI}{\mathscripty{I}}
\newcommand{\SJ}{\mathscripty{J}}
\newcommand{\Cstar}{\mathrm{C}^*}
\newcommand{\cst}{\mathrm{C}^*}
\newcommand{\cstar}{$\mathrm{C}^*$}
\newcommand{\cP}{\mathcal{P}}
\newcommand{\bbN}{\mathbb{N}}
\newcommand{\bbZ}{\mathbb{Z}}
\newcommand{\cB}{\mathcal{B}}
\newcommand{\cK}{\mathcal{K}}
\newcommand{\fS}{\mathfrak{S}}
\newcommand{\cPNF}{\mathcal{P}(\bbN)/\Fin}
\newcommand{\ZFC}{\mathrm{ZFC}}
\newcommand{\MA}{\mathrm{MA}}
\newcommand{\OCAinf}{\mathrm{OCA_\infty}}
\newcommand{\OCA}{\mathrm{OCA_T}}
\newtheorem{theorem}{Theorem}[section]
\newtheorem*{theorem*}{Theorem}
\newtheorem{proposition}[theorem]{Proposition}
\newtheorem{problem}[theorem]{Problem}
\newtheorem*{proposition*}{Proposition}
\newtheorem{lemma}[theorem]{Lemma}
\newtheorem*{lemma*}{Lemma}
\newtheorem{corollary}[theorem]{Corollary}
\newtheorem*{corollary*}{Corollar}
\newtheorem*{fact*}{Fact}
\theoremstyle{definition}
\newtheorem{definition}[theorem]{Definition}
\newtheorem*{definition*}{Definition}
\newtheorem{claim}[theorem]{Claim}
\newtheorem*{claim*}{Claim}
\newtheorem*{conjecture*}{Conjecture}
\newtheorem{assumption}[theorem]{Assumption}
\newtheorem{question}[theorem]{Question}
\theoremstyle{remark}
\newtheorem{example}[theorem]{Example}
\newtheorem*{example*}{Example}
\newtheorem*{remark*}{Remark}
\newtheorem*{note*}{Note}
\newtheorem*{question*}{Question}
\newtheorem{exmpl}[theorem]{Example}
\newcommand{\norm}[1]{\left\lVert #1 \right\rVert}
\DeclareMathOperator{\Sp}{Sp}
\DeclareMathOperator{\Id}{Id}
\DeclareMathOperator{\supp}{supp}
\DeclareMathOperator{\propg}{prop}
\DeclareMathOperator{\Fin}{Fin}
\DeclareMathOperator{\rank}{rank}
\DeclareMathOperator{\Ad}{Ad}
\DeclareMathOperator{\diam}{diam}
\newcommand{\calD}{\mathcal D} 
\newcommand{\cG}{\mathcal G} 
\newcommand{\cM}{\mathcal M} 
\newcommand{\cN}{\mathcal N} 
\newcommand{\bbC}{\mathbb C}
\newcounter{my_enumerate_counter}
\newcommand{\pushcounter}{\setcounter{my_enumerate_counter}{\value{enumi}}}
\newcommand{\popcounter}{\setcounter{enumi}{\value{my_enumerate_counter}}}
\begin{document}

\title{Uniform Roe coronas}%

\author[B. M. Braga]{Bruno M. Braga}
\address[B. M. Braga]{Department of Mathematics and Statistics,
York University,
4700 Keele Street,
Toronto, Ontario, Canada, M3J
1P3}
\email{demendoncabraga@gmail.com}
\urladdr{https://sites.google.com/site/demendoncabraga}

\author[I. Farah]{Ilijas Farah}
\address[I. Farah]{Department of Mathematics and Statistics,
York University,
4700 Keele Street,
Toronto, Ontario, Canada, M3J
1P3}
\email{ifarah@mathstat.yorku.ca}
\urladdr{http://www.math.yorku.ca/$\sim$ifarah}

\author[A. Vignati]{Alessandro Vignati}
\address[A. Vignati]{ 
Institut de Math\'ematiques de Jussieu - Paris Rive Gauche (IMJ-PRG)\\
Universit\'e de Paris\\
B\^atiment Sophie Germain\\
8 Place Aur\'elie Nemours \\ 75013 Paris, France}
\email{ale.vignati@gmail.com}
\urladdr{http://www.automorph.net/avignati}

\subjclass[2010]{}
\keywords{}
\thanks{}
\date{\today}%
\maketitle

\begin{abstract}
A uniform Roe corona is the quotient of the uniform Roe algebra of a metric space by the ideal of compact operators. Among other results, we show that it is consistent with ZFC that isomorphism between uniform Roe coronas implies coarse equivalence between the underlying spaces, for the class of uniformly locally finite metric spaces which coarsely embed into a Hilbert space. Moreover, for uniformly locally finite metric spaces with property A, it is consistent with ZFC that isomorphism between the uniform Roe coronas is equivalent to bijective coarse equivalence between some of their cofinite subsets. 
We also find locally finite metric spaces such that the isomorphism of their uniform Roe coronas is independent of ZFC. 
All set-theoretic considerations in this paper are relegated to two `black box' principles.
\end{abstract}

\setcounter{tocdepth}{1}
\tableofcontents
\section{Introduction}\label{SectionIntro}

Given 
 a metric space $(X,d)$, one can define a \cstar-subalgebra $\cstu(X)$ of the space of operators on $\ell_2(X)$ called the \emph{uniform Roe algebra of $X$}. More precisely, $\cstu(X)$ is defined as the norm closure of the 
 algebra of all operators on $\ell_2(X)$ of finite propagation with respect to the metric $d$ (we refer the reader to Section~\ref{SectionPrelim} for
precise definitions). 
For recent results on uniform Roe algebras, as well as Roe algebras, see 
\cite{RoeWillett2014,SpakulaWillett2013Crelle,SpakulaWillett2013AdvMath}. 
 The motivation for the study of these algebras comes from its intrinsic relation with the
coarse Baum-Connes conjecture and, consequently, with the Novikov conjecture
 \cite{Yu2000}. One of the main questions about uniform Roe algebras is whether this \cstar-algebra completely determines the large scale geometry of the underlying metric space.

 \begin{problem}\label{ProblemRigProm}\emph{\textbf{(Rigidity of Uniform Roe Algebras)}}
Let $X$ and $Y$ be metric spaces such that $\cstu(X)$ and $\cstu(Y)$ are isomorphic. Does it follow that $X$ and $Y$ are coarsely equivalent? 	
\end{problem}

Recently, much progress has been made on the rigidity problem within the class of uniformly locally finite metric spaces. Precisely, in \cite[Theorem~1.8]{SpakulaWillett2013AdvMath}, it was shown that Problem \ref{ProblemRigProm} has a positive answer for uniformly locally finite metric spaces with G. Yu's property A.\footnote{Since we do not make explicit use of it, we do not properly define property A here (see \cite[\S11.5]{RoeBook} for its definition).} The first two authors, improved this result in \cite[Corollary 1.2]{BragaFarah2018Trans}. They showed that Problem \ref{ProblemRigProm} has a positive answer for uniformly locally finite metric spaces which coarsely embed into a Hilbert space and for uniformly locally finite spaces such that all ghosts projections are compact. Recall, an operator $a\in \cstu(X)$ is called a \emph{ghost} if for all $\eps>0$ there exists a bounded 
 $F\subset X$ such that $|\langle a\delta_x,\delta_y\rangle|<\eps$ for all $x$ and $y$ in $X\setminus F$.\footnote{A uniformly locally finite metric space $X$ has property A if and only if all ghost operators in $\cstu(X)$ are compact, by \cite[Theorem 1.3]{RoeWillett2014}.}

 Our objective is to understand what information about the uniform Roe algebra is preserved after passing to the quotient by the ideal of compact operators. Towards this end we utilize the theory of liftings for corona \cstar-algebras (\cite{Farah.C,Gha:FDD,McKenney.UHF,MKV.FA,V.PhDThesis,vignati2018rigidity}).

 \begin{definition}
Let $X$ be a countable metric space and $\cstu(X)$ be the uniform Roe algebra of $X$. The \emph{uniform Roe corona} $\roeq(X)$ is defined by 
\[
\roeq(X)=\cstu(X)/\mathcal K(\ell_2(X)),
\]
where $\mathcal K(\ell_2(X))$ denotes the space of all compact operators on $\ell_2(X)$.
\end{definition}

The terminology is justified by the resemblance of these quotient structure to corona \cstar-algebras. The rigidity problem has a clear version for uniform Roe coronas.

\begin{problem}\label{ProblemRigProbCorona}\emph{\textbf{(Rigidity of Uniform Roe Coronas)}}
Let $X$ and $Y$ be metric spaces such that $\roeq(X)$ and $\roeq(Y)$ are isomorphic. Does it follow that $X$ and $Y$ are coarsely equivalent? 	
\end{problem}

The study of isomorphisms between quotient algebras is intrinsically related to the search for liftings of those isomorphisms. Our notes deal with the following notion of lift.

\begin{definition}\label{Def.LiftOnDiagIso}
Let $X$ and $Y$ be countable metric spaces. 
\begin{enumerate}
\item A $^*$-homomorphism $\Lambda\colon \roeq(X)\to \roeq(Y)$ is \emph{liftable on the diagonal} if there is a strongly continuous $^*$-homomorphism $\Phi\colon \ell_\infty(X)\to \cstu(Y)$ which lifts $\Lambda$ on $\ell_\infty(X)/c_0(X)$ (see Definition~\ref{def:lift}). 

\item If $\Lambda$ is an isomorphism, we say that it is \emph{liftable on diagonals} if both~$\Lambda$ and $\Lambda^{-1}$ are liftable on the diagonal. 
 If this is the case then we say that $\roeq(X)$ and $\roeq(Y)$ are \emph{liftable on diagonals isomorphic}.
\end{enumerate}
\end{definition}

In order to guarantee that an automorphism is liftable on diagonals it is often necessary to work within a theory that extends the standard Zermelo--Fraenkel axioms for set theory, ZFC (see \cite{Farah.C}, \cite{MKV.FA}, and Example~\ref{Ex:RudinShelah}). 
 Our rigidity results are relatively consistent with ZFC and hold under the Open Coloring Axiom, $\OCA$, and Martin's Axiom, $\mathrm{MA}_{\aleph_1}$ (see \S\ref{SubsectionSetTheoryForcingAxiom} for definitions). 
These axioms are used only indirectly, via a `black box' extracted from \cite{MKV.FA}. The following is proved in \S\ref{S:proof.blackbox}.

\begin{theorem} \label{T:blackbox}
Assume $\OCA$ and $\MA_{\aleph_1}$. Then every isomorphism 
between uniform Roe coronas of countable metric spaces is liftable on the diagonals. 
\end{theorem}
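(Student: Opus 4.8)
The plan is to reduce the statement to a known rigidity-type result for quotients by $c_0$-type ideals, specifically the `black box' of McKenney--Vignati \cite{MKV.FA}. The first step is to recall that $\cstu(X)$ contains, as a canonical masa, the algebra $\ell_\infty(X)$ of bounded functions on $X$, acting as multiplication operators on $\ell_2(X)$; its image in $\roeq(X)$ is $\ell_\infty(X)/c_0(X)$. The key structural observation is that $\cstu(X)$ carries a natural \emph{approximate unit of projections} indexed by finite subsets of $X$, and, crucially, that $\roeq(X)$ is a $\sigma$-unital, but non-separable, quotient of an algebra with an FDD-like structure. More precisely, partitioning $X$ into finite pieces $(E_n)_n$ with $\bigcup_n E_n = X$, one obtains finite-dimensional subalgebras $\prod_{i\le k}B(\ell_2(E_i))$ sitting inside $\cstu(X)$, and the compact ideal is exactly the norm-closure of operators eventually supported off these pieces. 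This exhibits $\cstu(X)/\mathcal K$ as a quotient that falls under the scope of the rigidity machinery of \cite{MKV.FA}, which under $\OCA_\infty + \MA_{\aleph_1}$ provides liftings of $^*$-homomorphisms between such coronas on suitable separable/abelian subalgebras.

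Next I would make precise what `black box' output is needed. Given an isomorphism $\Lambda\colon\roeq(X)\to\roeq(Y)$, apply the lifting theorem of \cite{MKV.FA} to obtain that the restriction of $\Lambda$ to the abelian subalgebra $\ell_\infty(X)/c_0(X)$ admits a lift that is, in an appropriate sense, `continuous' — and then upgrade this to a \emph{strongly continuous} $^*$-homomorphism $\Phi\colon\ell_\infty(X)\to\cstu(Y)$. The passage from a set-theoretically-produced lift on the Boolean algebra of projections $\mathcal P(\ell_\infty(X)/c_0(X))\cong\mathcal P(X)/\Fin$ to a strongly continuous $^*$-homomorphism on all of $\ell_\infty(X)$ is the standard move: a lift of the projection lattice that respects countable suprema (equivalently, is $\sigma$-additive modulo compacts in a coherent way) extends uniquely to a strongly continuous unital $^*$-homomorphism on the generated von Neumann algebra $\ell_\infty(X)$, and strong continuity is what guarantees that the extension takes values back in $\cstu(Y)$ rather than merely in $B(\ell_2(Y))$, because finite-propagation operators are strong limits of their `truncations' and $\cstu(Y)$ is strongly closed in the relevant restricted sense (or: the diagonal subalgebra $\ell_\infty(Y)\subseteq\cstu(Y)$ is a von Neumann algebra). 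Apply the same to $\Lambda^{-1}$ to get liftability on both diagonals, which is exactly Definition~\ref{Def.LiftOnDiagIso}(2).

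The main obstacle, and the place where the hypotheses $\OCA_\infty$ and $\MA_{\aleph_1}$ are genuinely used, is the production of the lift on the diagonal in the first place. Here one must check that the coronas $\roeq(X)$ do indeed satisfy the hypotheses of the relevant theorem in \cite{MKV.FA} — e.g. that they are coronas of algebras admitting the appropriate FDD structure, that $\Lambda$ is automatically `topologically trivial' or `has a Borel-measurable lift' on a large enough subalgebra, and that the axioms then force this approximate lift to be coherent enough to be straightened into an honest lift. In particular one must verify a rigidity/uniqueness-of-lifts statement robust enough that the lift of the abelian part can be chosen compatibly with $\Lambda$ on all of $\roeq(X)$, not just on the diagonal in isolation; the subtlety is that a priori $\Lambda$ need not respect the diagonal at all, so one first argues that any isomorphism of uniform Roe coronas must send the diagonal masa of $X$ (or a conjugate of it) to the diagonal masa of $Y$ — this uses a characterization of $\ell_\infty(X)/c_0(X)$ inside $\roeq(X)$ as (the corona of) a maximal abelian subalgebra with a specific lifting property — and only then invokes the black box on the resulting abelian isomorphism. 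I expect the bulk of \S\ref{S:proof.blackbox} to be devoted precisely to this verification that uniform Roe coronas are of the form to which \cite{MKV.FA} applies, together with the masa-preservation argument.
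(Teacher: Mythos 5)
There is a genuine gap, and it is located exactly where you declare the work to be essentially done. You treat the black box of \cite{MKV.FA} as if it delivered a lift of $\Lambda$ on the whole diagonal (or on the projection lattice $\mathcal P(X)/\Fin$, to be then extended by ``the standard move''), and you place the remaining difficulty in (a) verifying that $\roeq(X)$ has the right FDD-type form and (b) a masa-preservation step. In fact, applying \cite[Theorem~8.4]{MKV.FA} with $k(n)=1$ and $\roeq(Y)$ viewed inside the Calkin algebra of $\ell_2(Y)$ is immediate (this is Theorem~\ref{ThmAlessandroPaulHomomorphism}), but what it yields is only an \emph{almost} lift: a strongly continuous $^*$-homomorphism $\Phi\colon\ell_\infty(X)\to\cB(\ell_2(Y))$ lifting $\Lambda$ on $\{\pi_X(\chi_S)\colon S\in\SI\}$ for some nonmeager dense ideal $\SI\subseteq\mathcal P(X)$ --- not on all of $\ell_\infty(X)/c_0(X)$. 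The actual content of the theorem is the upgrade from ``almost liftable'' to ``liftable'' (Proposition~\ref{PropCofiniteSubRank1}): one shows that commutation of an element of $\roeq(Y)$ with $\pi_Y(\Phi(\chi_S))$ for all $S\in\SI$ forces commutation for all $S\subseteq X$ (via the Jalali--Naini/Talagrand characterization of nonmeager ideals, Proposition~\ref{T:J-NT}, together with Lemma~\ref{L:AA}), that $\pi_Y(\Phi(1))=1$ (using that $\ell_\infty(X)/c_0(X)$ is a masa, by Johnson--Parrott), and finally, for an arbitrary lift $F$ of $\Lambda$ and each fixed $S$, that the Borel ideal $\SI_S=\{T\subseteq X\colon \Phi(\chi_T)(\Phi(\chi_S)-F(\chi_S))\in\cK(\ell_2(Y))\}$ contains the nonmeager $\SI$ and hence cannot be proper, so $X\in\SI_S$ and $\Phi(\chi_S)-F(\chi_S)$ is compact. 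Your proposal contains no substitute for this extension from a nonmeager family of projections to the whole diagonal, and that is the heart of the proof.

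Two further points. The masa-preservation step you propose (that $\Lambda$ must carry $\ell_\infty(X)/c_0(X)$ onto the diagonal masa of $Y$) is neither needed nor justified: Definition~\ref{Def.LiftOnDiagIso} only requires a lift $\Phi\colon\ell_\infty(X)\to\cstu(Y)$ of $\Lambda$ restricted to the diagonal of $X$, with no constraint on where its image sits, and \cite[Theorem~8.4]{MKV.FA} applies to unital injective $^*$-homomorphisms into the Calkin algebra without any assumption that the diagonal is respected; so an argument in the direction you sketch would be extra work for a claim that is not required (and, as literally stated, not available). Moreover, your assertion that strong continuity of $\Phi$ automatically places its range inside $\cstu(Y)$ because ``$\cstu(Y)$ is strongly closed in the relevant restricted sense'' is false as stated: $\cstu(Y)$ is not strongly closed, and the containment $\Phi[\ell_\infty(X)]\subseteq\cstu(Y)$ is itself a theorem (Proposition~\ref{prop:stayinside}), proved by a coarse-like approximation argument that once again uses the nonmeagerness of the ideal on which $\Phi$ lifts $\Lambda$.
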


Before stating our rigidity results, we need two definitions regarding the geometry of metric spaces. 

\begin{definition}
Let $(X,d)$ be a metric space.
\begin{enumerate}
\item The space $X$ is \emph{sparse} if there exists a partition $X=\bigsqcup_nX_n$ of $X$ into finite subsets such that $d(X_n,X_m)\to \infty$ as $n+m\to \infty$. 
\item The space $X$ \emph{yields only compact ghost projections} if every ghost projection in $\cstu(X)$ is compact.
\end{enumerate} 
\end{definition}

Any locally finite metric space whose sparse subspaces yield only compact ghost projections is uniformly locally finite (Lemma~\ref{L.ulf.ghost}). 
Each of the conditions `$X$ yields only compact ghost projections' 
and `$X$ is coarsely embeddable into a Hilbert space' 
separately implies that 
sparse subspaces of $X$ yield only compact ghost projections 
(for the latter, see \cite[Lemma 7.3]{BragaFarah2018Trans}). 

The following is one of our main results, proved in \S\ref{SectionCoarseEquiv}.

\begin{theorem} \label{ThmRigidityRoeCorona}
Suppose $X$ and $Y$ are locally finite metric spaces such that all of their sparse subspaces yield only compact ghost projections. 
 If $\roeq(X)$ and $\roeq(Y)$ are liftable on diagonals isomorphic, then 
 $X$ and $Y$ are coarsely equivalent.
\end{theorem}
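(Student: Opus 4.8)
The plan is to exploit the hypothesis that $\Lambda\colon \roeq(X)\to\roeq(Y)$ is liftable on diagonals in order to transport the relevant geometric information from $X$ to $Y$ and back, and then recognize coarse equivalence from the resulting data. First I would fix a strongly continuous $^*$-homomorphism $\Phi\colon \ell_\infty(X)\to\cstu(Y)$ lifting $\Lambda$ on $\ell_\infty(X)/c_0(X)$, and symmetrically $\Psi\colon \ell_\infty(Y)\to\cstu(X)$ lifting $\Lambda^{-1}$. A strongly continuous unital $^*$-homomorphism out of $\ell_\infty(X)$ is determined by where it sends the minimal projections $\delta_x$, so $\Phi$ is given by a family of pairwise orthogonal projections $p_x=\Phi(\delta_x)\in\cstu(Y)$ with $\sum_x p_x=1$ (strong convergence), and likewise $\Psi$ by projections $q_y\in\cstu(X)$. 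The idea is that this assignment $x\mapsto p_x$ behaves, modulo compacts and modulo "ghostly" errors, like the graph of a coarse equivalence: since $\Lambda$ intertwines the diagonal subalgebras, for $A\subseteq X$ the projection $\sum_{x\in A}p_x$ should lie (mod $\mathcal K$) in the subalgebra $\cstu(Y)$ in a way that reflects the metric on $A$, and its ``support'' in $Y$ should have bounded propagation relative to the partition of $X$.

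The second step is the core geometric extraction. I would argue that the projections $p_x$ are, after discarding a small (compact) part, supported near single points: one uses that if the $p_x$ had supports of unbounded diameter or accumulated badly, one could build a sparse subspace of $Y$ (resp. $X$) on which a non-compact ghost projection lives — contradicting the hypothesis that all sparse subspaces yield only compact ghost projections. Concretely, partition $X$ into finite pieces that go to infinity (on a sparse subspace) and examine the pieces of the $p_x$'s living far from their ``center of mass''; finite propagation of elements of $\cstu(Y)$ forces these tails to be asymptotically small, i.e. ghostly, and since the ambient space restricted to the union of these pieces is sparse, such a ghost projection is compact, so the tails are genuinely negligible mod $\mathcal K$. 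This yields, for all but finitely many $x$, a point $f(x)\in Y$ with $p_x$ essentially equal to a rank-one-ish projection at $f(x)$ (more precisely $\|p_x-\delta_{f(x)}\|$ small, or $p_x$ supported in a ball of fixed radius about $f(x)$). Symmetrically one obtains $g\colon Y\to X$. That $\Lambda\circ\Lambda^{-1}=\id$ and $\Lambda^{-1}\circ\Lambda=\id$ then forces $f\circ g$ and $g\circ f$ to be close to the identity, so $f$ is a bijective coarse equivalence between cofinite subsets — in particular a coarse equivalence of $X$ and $Y$ (locally finite spaces differing by finitely many points are coarsely equivalent).

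The third step is to verify that $f$ is actually coarse (bornologous) and expanding. Here I would use that $\Lambda$ is a $^*$-\emph{homomorphism}, not just a map on diagonals: a partial translation / partial isometry $v\in\cstu(X)$ of propagation $\le r$ implementing a bijection between two subsets of $X$ has $\Lambda([v])$ a partial isometry in $\roeq(Y)$ between the corresponding diagonal projections, and its lift must again have finite propagation; chasing matrix coefficients, the propagation of a lift of $\Lambda([v])$ controls $d_Y(f(x),f(x'))$ in terms of $d_X(x,x')$. Running the same argument for $\Lambda^{-1}$ and $v^*$-type partial translations gives the reverse inequality, so $f$ and $g=f^{-1}$ are both bornologous: $f$ is a coarse equivalence.

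The main obstacle I expect is the ``support localization'' in step two: turning the qualitative statement ``$p_x$ is essentially a point mass'' into a quantitative one with uniform bounds, while correctly invoking the sparse-subspace/ghost-projection hypothesis. The delicate point is that a single projection $p_x$ need not have small propagation tails, but the \emph{family} $(p_x)_{x\in X}$, reorganized along a sparse partition of $X$ and projected onto far-away regions of $Y$, assembles into a ghost projection on a sparse subspace; making this assembly precise (choosing the partition of $X$, controlling finitely many exceptional $x$, and ensuring the resulting operator is a genuine projection mod $\mathcal K$ rather than just ``ghostly idempotent'') is where the real work lies, and is presumably what the authors' proof in \S\ref{S:proof.blackbox} carries out in detail, likely reusing the rigidity machinery of \cite{BragaFarah2018}.
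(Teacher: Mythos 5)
Your overall architecture matches the paper's: lift $\Lambda$ and $\Lambda^{-1}$ on the diagonals, use the sparse-subspace/ghost hypothesis to attach a point $f(x)\in Y$ to (cofinitely many) $x$, then use partial translations pushed through $\Lambda$ together with the fact that the image lives in $\cstu(Y)$ to prove coarseness and closeness of $f\circ g$, $g\circ f$ to the identities. But the core of your step two claims far more than the hypotheses can deliver, and this is a genuine gap. You assert that, after a compact error, $\Phi(e_{xx})$ is ``essentially a point mass'': $\|\Phi(e_{xx})-e_{f(x)f(x)}\|$ small, or $\Phi(e_{xx})$ supported in a ball of uniformly bounded radius around $f(x)$, and you deduce that $f$ is a bijective coarse equivalence between cofinite subsets. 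Neither localization statement follows from ``all sparse subspaces yield only compact ghost projections''; uniform support localization of the $\Phi(e_{xx})$ is exactly what the operator norm localization property buys in \S\ref{SectionBijCoarseEquiv} (Lemmas~\ref{LemmaUdelta} and~\ref{LemmaONL}), i.e.\ it belongs to the stronger Theorem~\ref{ThmRigidityRoeCoronaPropertyA}, where property~A is assumed and where bijectivity on cofinite subsets is the correct conclusion. Under the hypotheses of Theorem~\ref{ThmRigidityRoeCorona} the paper only extracts (Proposition~\ref{PropThankGodForNoNoncompactGhostProj}) a uniform $\delta>0$ and a point $f(x)$ with $\|\Phi(e_{xx})e_{f(x)f(x)}\|\geq\delta$ — a single matrix coefficient bounded below, nothing about supports — and this weaker datum is all the rest of the argument uses; in particular $f$ need not be close to injective, and one settles for a plain coarse equivalence.

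Two further steps you gloss over are where the remaining work sits. First, before Proposition~\ref{PropThankGodForNoNoncompactGhostProj} can be applied one must know $\Phi(e_{xx})$ has rank one for cofinitely many $x$ (Proposition~\ref{X:PropCofiniteSubRank1}); this requires the splitting argument of Lemma~\ref{LemmaToolFiniteRank} plus the fact that $\ell_\infty(X)/c_0(X)$ is a masa in $\roeq(X)$, and the rank-one property is what later makes the coarseness estimate quantitative (via the norm identity of Lemma~\ref{PropNormEqualityRank1}). Second, in your step three the assertion that a lift of $\Lambda([v])$ ``must again have finite propagation'' is false: the lift $e$ is only in $\cstu(Y)$, so all you get is that far-off-diagonal matrix coefficients of $e$ tend to zero. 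To reach a contradiction one also needs a uniform \emph{lower} bound on $\|a_n e b_n\|$ (where $a_n,b_n$ are rank-one projections close to $\Phi(e_{x^1_nx^1_n})$, $\Phi(e_{x^2_nx^2_n})$), which comes from the fact that the partial translation $\sum_n e_{x^1_nx^2_n}$ is noncompact and $\Lambda$ is injective; combining this with Lemma~\ref{PropNormEqualityRank1} and the $\delta$-lower bounds is precisely the content of Lemma~\ref{LemmaTheMapsAreCoarse}, and it is absent from your sketch. Finally, your plan to assemble a ghost from the ``tails'' of the $p_x$ runs into the projection-mod-$\mathcal K$ issue you yourself flag; the paper sidesteps it by perturbing whole projections onto a sparse subspace via functional calculus and summing those.
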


By Theorem~\ref{T:blackbox}, we have the following. 

\begin{corollary}\label{Cor.ThmRigidityRoeCorona} Assume  $\OCA$ and $\mathrm{MA}_{\aleph_1}$. 
Suppose $X$ and $Y$ are locally finite metric spaces all of whose sparse subspaces yield only compact ghost projections. If $\roeq(X)\cong \roeq(Y)$, then $X$ and $Y$ are coarsely equivalent.\qed 
\end{corollary}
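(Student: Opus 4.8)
The plan is to derive Corollary~\ref{Cor.ThmRigidityRoeCorona} as an immediate combination of Theorem~\ref{T:blackbox} and Theorem~\ref{ThmRigidityRoeCorona}, so that the work reduces to checking that the hypotheses line up. First I would observe that a locally finite metric space is automatically countable: fixing a basepoint $x_0$, every point lies at finite distance from $x_0$, so $X=\bigcup_{n\in\N}\{x\in X: d(x_0,x)\le n\}$ is a countable union of finite sets, and likewise for $Y$; hence $\roeq(X)$ and $\roeq(Y)$ are defined in the sense of the excerpt and Theorem~\ref{T:blackbox} applies to them.

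Next, starting from an isomorphism $\Lambda\colon\roeq(X)\to\roeq(Y)$, I would invoke Theorem~\ref{T:blackbox} — the only place where $\mathrm{OCA}_\infty$ and $\mathrm{MA}_{\aleph_1}$ are used — to conclude that both $\Lambda$ and $\Lambda^{-1}$ are liftable on the diagonal. By item (2) of Definition~\ref{Def.LiftOnDiagIso} this says precisely that $\Lambda$ is liftable on diagonals, i.e.\ that $\roeq(X)$ and $\roeq(Y)$ are liftable on diagonals isomorphic. Feeding this, together with the standing hypothesis that all sparse subspaces of $X$ and of $Y$ yield only compact ghost projections, into Theorem~\ref{ThmRigidityRoeCorona} then yields that $X$ and $Y$ are coarsely equivalent, which is the desired conclusion.

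There is no obstacle internal to this argument: the corollary is a bookkeeping statement, and all of the substantive difficulty is packaged into the two cited theorems, both established in \S\ref{S:proof.blackbox}. Theorem~\ref{T:blackbox} is the set-theoretic core, obtained by reducing the lifting problem to the `black box' of \cite{MKV.FA}; Theorem~\ref{ThmRigidityRoeCorona} is the coarse-geometric rigidity result that controls ghost projections along sparse decompositions. The only point deserving (minor) care is matching the terminology, namely confirming that ``liftable on the diagonals'' as delivered by Theorem~\ref{T:blackbox} coincides with the hypothesis ``liftable on diagonals isomorphic'' demanded by Theorem~\ref{ThmRigidityRoeCorona}; this is immediate from Definition~\ref{Def.LiftOnDiagIso}.
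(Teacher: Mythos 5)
Your proposal is correct and coincides with the paper's own (implicit) proof: the corollary is stated with the qed symbol precisely because it follows by combining Theorem~\ref{T:blackbox} with Theorem~\ref{ThmRigidityRoeCorona}, exactly as you do. The observation that local finiteness gives countability (so the black-box lifting theorem applies) is also the same remark the paper makes in \S\ref{SectionPrelim}.
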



It was proved in \cite[Corollary~6.13]{WhiteWillett2017} that the existence of an 
isomorphism between $\cstu(X)$ and $\cstu(Y)$ is equivalent to bijective coarse equivalence for uniformly locally finite metric spaces $X$ and $Y$ with property A. The next result is a version of that for isomorphisms between uniform Roe coronas which are liftable on the diagonals. 

\begin{theorem} \label{ThmRigidityRoeCoronaPropertyA} Let $X$ and $Y$ be uniformly locally finite metric spaces and assume that $X$ has property A.
The following are equivalent.
\begin{enumerate}
\item\label{ItemLiftIso} The uniform Roe coronas $\roeq(X)$ and $\roeq(Y)$ are liftable on diagonals isomorphic.
\item\label{ItemEquivv} There exist cofinite subsets $\tilde X\subseteq X$ and $\tilde Y\subseteq Y$ such that $\tilde X$ and $\tilde Y$ are bijectively coarsely equivalent.
\end{enumerate}
\end{theorem}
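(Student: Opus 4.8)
The plan is to prove Theorem~\ref{ThmRigidityRoeCoronaPropertyA} by establishing the two implications separately, with the direction $(\ref{ItemEquivv})\Rightarrow(\ref{ItemLiftIso})$ being the routine one and $(\ref{ItemLiftIso})\Rightarrow(\ref{ItemEquivv})$ requiring the real work.

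\textbf{The easy direction $(\ref{ItemEquivv})\Rightarrow(\ref{ItemLiftIso})$.} Suppose $f\colon \tilde X\to\tilde Y$ is a bijective coarse equivalence between cofinite subsets. First I would note that removing finitely many points does not change the uniform Roe corona: if $\tilde X\subseteq X$ is cofinite then $\cstu(\tilde X)$ sits inside $\cstu(X)$ as a corner by a compact projection, so $\roeq(\tilde X)\cong\roeq(X)$ canonically, and likewise for $Y$. Hence it suffices to produce a liftable-on-diagonals isomorphism $\roeq(\tilde X)\to\roeq(\tilde Y)$. A bijection $f$ induces a unitary $u_f\colon \ell_2(\tilde X)\to\ell_2(\tilde Y)$ sending $\delta_x\mapsto\delta_{f(x)}$, and conjugation by $u_f$ carries finite-propagation operators on $\ell_2(\tilde X)$ to finite-propagation operators on $\ell_2(\tilde Y)$ precisely because $f$ is a coarse equivalence (it and its inverse are both bornologous); thus $\Ad(u_f)$ is an isomorphism $\cstu(\tilde X)\to\cstu(\tilde Y)$ taking $\mathcal K(\ell_2(\tilde X))$ onto $\mathcal K(\ell_2(\tilde Y))$, so it descends to an isomorphism of the coronas. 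This isomorphism is manifestly liftable on the diagonal: $\Ad(u_f)$ itself maps $\ell_\infty(\tilde X)$ into $\cstu(\tilde Y)$ strongly continuously and lifts the induced map, and the same works for the inverse. Composing with the canonical identifications gives the desired isomorphism.

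\textbf{The hard direction $(\ref{ItemLiftIso})\Rightarrow(\ref{ItemEquivv})$.} Here I would first invoke Theorem~\ref{ThmRigidityRoeCorona}: since $X$ has property A, all its subspaces (in particular all sparse ones) have property A and hence yield only compact ghost projections; the same holds for $Y$ once we know $Y$ is coarsely embeddable, but more directly one argues that a liftable-on-diagonals isomorphism forces $Y$'s sparse subspaces to also yield only compact ghosts (this kind of transfer is implicit in the machinery, or one simply checks $Y$ has property A a posteriori after proving coarse equivalence; to be safe one can note that liftability-on-diagonals of the isomorphism already gives, via the proof of Theorem~\ref{ThmRigidityRoeCorona}, a coarse equivalence $X\sim Y$, whence $Y$ has property A). So at this stage we have a coarse equivalence $X\to Y$. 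The point of the strengthening is to upgrade ``coarsely equivalent'' to ``cofinite subsets are \emph{bijectively} coarsely equivalent.'' This is a purely coarse-geometric statement: for uniformly locally finite spaces $X,Y$ with property~A, a coarse equivalence can be promoted to a bijective coarse equivalence between cofinite subsets. I would prove this by the standard back-and-forth/marriage argument: given a coarse equivalence $f\colon X\to Y$ and coarse inverse $g\colon Y\to X$, the map $f$ is finite-to-one up to bounded multiplicity (by uniform local finiteness), so one builds a bijection between cofinite pieces using Hall's marriage theorem applied to the bipartite graph linking $x$ to the $y$'s within a fixed bounded distance, checking the Hall condition via the doubling-type bound that uniform local finiteness provides, and then verifying the resulting bijection and its inverse are bornologous because they stay within bounded distance of $f$ and $g$. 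The removal of a finite set absorbs the vertices where marriage momentarily fails.

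\textbf{Main obstacle.} I expect the genuine difficulty to be entirely in the direction $(\ref{ItemLiftIso})\Rightarrow(\ref{ItemEquivv})$, and within it, in the \emph{bijectivization} step rather than in producing some coarse equivalence — the latter is handed to us by Theorem~\ref{ThmRigidityRoeCorona}. Concretely, the subtlety is that Hall's theorem gives a matching saturating one side, but we need a bijection between \emph{cofinite} subsets of both sides simultaneously, and one must be careful that the finitely many unmatched vertices on each side can be discarded without destroying the coarse structure; handling this symmetrically, and confirming that property~A (or just uniform local finiteness plus the existence of a coarse inverse) is exactly what makes the relevant bipartite graphs satisfy a uniform expansion/Hall condition, is where the care is needed. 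A secondary, more bookkeeping-level obstacle is making precise and checking the ``cofinite change doesn't affect the corona'' reductions so that the two directions genuinely compose into an equivalence.
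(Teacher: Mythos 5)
The direction \eqref{ItemEquivv}$\Rightarrow$\eqref{ItemLiftIso} of your proposal is fine and is essentially the paper's argument. The problem is the other direction: your plan is to first extract a coarse equivalence from Theorem~\ref{ThmRigidityRoeCorona} and then ``bijectivize'' it by a purely coarse-geometric marriage argument, claiming that for uniformly locally finite property~A spaces a coarse equivalence can always be promoted to a bijective coarse equivalence between cofinite subsets. That claim is false. Take $X=\{n^2\colon n\in\N\}$ and $Y=X\cup(X+1)$ inside $\Z$: both are uniformly locally finite with property~A (asymptotic dimension $0$), and the inclusion $X\hookrightarrow Y$ is a coarse equivalence, but no cofinite subsets of $X$ and $Y$ are bijectively coarsely equivalent --- any bijection $h$ defined on a cofinite subset of $Y$ must send the infinitely many pairs $\{n^2,n^2+1\}$ (distance $1$) to infinitely many distinct pairs of distinct squares, which cannot all lie within a fixed distance $C$ of each other. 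This is the Whyte/Burago--Kleiner phenomenon for amenable spaces: a coarse equivalence between u.l.f.\ spaces need not be close to any bijection, and Hall's condition for the bipartite graph you describe genuinely fails; uniform local finiteness plus a coarse inverse does not supply it. (Note also that if your geometric upgrade were true, condition \eqref{ItemEquivv} would collapse to plain coarse equivalence and the theorem would follow from Theorem~\ref{ThmRigidityRoeCorona} alone, which is exactly what the extra property~A hypothesis and the work in \S\ref{SectionBijCoarseEquiv} are there to go beyond.)

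The paper instead builds the bijection directly from the operator-algebraic data, not from an abstract coarse equivalence. Property~A gives ONL (Sako), which yields the localization Lemma~\ref{LemmaUdelta}; combined with Lemmas~\ref{LemmaONL} and \ref{LemmaLimInf} one proves Lemma~\ref{LemmaCardinalityXBDelta}, a Hall condition $|B|\le|X_{B,\delta}|$ obtained by a rank/corank comparison of the projections $\Phi(\chi_A)$, $\Psi(\chi_B)$ coming from the lifts. Hall's theorem then gives injections $X'\to Y$ and $Y'\to X$, K\"onig's proof of Cantor--Schr\"oder--Bernstein merges them into a bijection between cofinite subsets, and Lemma~\ref{LemmaTheMapsAreCoarse} shows it is a coarse equivalence. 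A secondary gap in your write-up: you cannot get the needed information about $Y$ ``via the proof of Theorem~\ref{ThmRigidityRoeCorona},'' since that proof already requires the compact-ghost hypothesis on sparse subspaces of $Y$ as input; the paper transfers property~A from $X$ to $Y$ by nuclearity ($\cstu(X)$ nuclear $\Rightarrow\roeq(X)\cong\roeq(Y)$ nuclear $\Rightarrow\cstu(Y)$ nuclear $\Rightarrow Y$ has property~A), and you would need some such argument before any of the rigidity machinery can be applied to $Y$.
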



By Theorem~\ref{T:blackbox}, we have the following. 

\begin{corollary} \label{C:ThmRigidityRoeCoronaPropertyA} 
Assume $\OCA$ and $\mathrm{MA}_{\aleph_1}$.
Let $X$ and $Y$ be uniformly locally finite metric spaces and assume that $X$ has property A. The following are equivalent.
\begin{enumerate}
\item The uniform Roe coronas $\roeq(X)$ and $\roeq(Y)$ are isomorphic.
\item There exist cofinite subsets $\tilde X\subseteq X$ and $\tilde Y\subseteq Y$ such that $\tilde X$ and $\tilde Y$ are bijectively coarsely equivalent. \qed
\end{enumerate}
\end{corollary}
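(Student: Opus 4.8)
We sketch a proof of Theorem~\ref{ThmRigidityRoeCoronaPropertyA}; Corollary~\ref{C:ThmRigidityRoeCoronaPropertyA} is then immediate, since by Theorem~\ref{T:blackbox} every isomorphism between uniform Roe coronas of countable metric spaces, together with its inverse, is liftable on the diagonal, so under $\OCA_\infty$ and $\MA_{\aleph_1}$ an isomorphism $\roeq(X)\cong\roeq(Y)$ is precisely condition~\eqref{ItemLiftIso}. The implication \eqref{ItemEquivv}$\Rightarrow$\eqref{ItemLiftIso} of the theorem is routine and I would dispatch it first. Given a bijective coarse equivalence $f\colon\tilde X\to\tilde Y$ between cofinite subsets, let $U\colon\ell_2(\tilde X)\to\ell_2(\tilde Y)$ be the unitary with $U\delta_x=\delta_{f(x)}$. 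Since $f$ and $f^{-1}$ are coarse maps, $\Ad_U$ carries operators of propagation at most $R$ to operators of propagation at most $S(R)$, hence restricts to a $^*$-isomorphism $\cstu(\tilde X)\to\cstu(\tilde Y)$ taking $\ell_\infty(\tilde X)$ onto $\ell_\infty(\tilde Y)$, strongly continuously. As $X\setminus\tilde X$ is finite, extension by zero makes $\cstu(\tilde X)\hookrightarrow\cstu(X)$ surjective modulo the compacts, so it descends to an isomorphism $\roeq(\tilde X)\cong\roeq(X)$, and likewise $\roeq(\tilde Y)\cong\roeq(Y)$; composing the three isomorphisms gives an isomorphism $\Lambda\colon\roeq(X)\to\roeq(Y)$. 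Finally $a\mapsto U(\chi_{\tilde X}\,a\,\chi_{\tilde X})U^{*}\in\ell_\infty(\tilde Y)\subseteq\cstu(Y)$ is a strongly continuous $^*$-homomorphism $\ell_\infty(X)\to\cstu(Y)$ lifting $\Lambda$ on $\ell_\infty(X)/c_0(X)$; the same recipe applied to $f^{-1}$ lifts $\Lambda^{-1}$, so $\Lambda$ is liftable on diagonals.

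For \eqref{ItemLiftIso}$\Rightarrow$\eqref{ItemEquivv}, fix diagonal lifts $\Phi\colon\ell_\infty(X)\to\cstu(Y)$ of $\Lambda$ and $\Psi\colon\ell_\infty(Y)\to\cstu(X)$ of $\Lambda^{-1}$. By strong continuity these are determined by the pairwise orthogonal projections $p_x:=\Phi(\chi_{\{x\}})\in\cstu(Y)$ and $q_y:=\Psi(\chi_{\{y\}})\in\cstu(X)$, with $\sum_x p_x=1$ and $\sum_y q_y=1$ strongly. Since $\chi_{\{x\}}\in c_0(X)$ its image in $\ell_\infty(X)/c_0(X)$ is $0$, so $\pi_Y(p_x)=\Lambda(0)=0$ and $p_x$ is compact, hence \emph{finite rank}; likewise each $q_y$. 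Moreover $\cstu(X)$ is nuclear --- one of the standard equivalent formulations of property A for the u.l.f.\ space $X$ --- so its quotient $\roeq(X)$ is nuclear, hence so is the isomorphic \cstar-algebra $\roeq(Y)$, and then so is $\cstu(Y)$, which is an extension of $\mathcal K(\ell_2(Y))$ by $\roeq(Y)$; thus $Y$ also has property A. In particular every sparse subspace of $X$ and of $Y$ yields only compact ghost projections (property A is inherited by subspaces; cf.\ \cite{RoeWillett2014}), so Theorem~\ref{ThmRigidityRoeCorona} already yields \emph{some} coarse equivalence $X\sim Y$. The point now is to upgrade this to a bijective coarse equivalence between cofinite subsets, and here one must use the finite-rank projections $p_x,q_y$ --- which preserve the counting data that is lost on passing to the corona.

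The key step is a rigidification of the lifts: I would show that $\Phi$ is \emph{asymptotically diagonal}, i.e.\ $\Phi(\ell_\infty(X))\subseteq\ell_\infty(Y)+\mathcal K(\ell_2(Y))$ --- equivalently, $\Lambda$ carries $\ell_\infty(X)/c_0(X)$ into $\ell_\infty(Y)/c_0(Y)$ --- and symmetrically for $\Psi$. This is the first essential use of property A, together with the fact that $\Lambda$ is an isomorphism whose inverse is also liftable: were the range of $p_x$ spread over two points of $Y$ at unbounded distance for infinitely many $x$ (or $\rank p_x\ge2$ over unbounded distances), one could read off these matrix coefficients along a sparse subspace of $Y$ and, using that $\Lambda^{-1}$ is also liftable to close the loop, manufacture a non-compact ghost projection, contradicting the previous paragraph. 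Granting asymptotic diagonality, the rounded diagonals $x\mapsto\{y:\langle p_x\delta_y,\delta_y\rangle>1/2\}$ and $y\mapsto\{x:\langle q_y\delta_x,\delta_x\rangle>1/2\}$ are, off finite sets, partitions of $Y$ and of $X$ into singletons --- disjointness being automatic from $\sum_x p_x=1$ and $\sum_y q_y=1$, and the singleton and exhaustion properties following from asymptotic diagonality on both sides combined with $\Lambda^{-1}\Lambda=\id$ --- and matching these partitions via $\Lambda^{-1}\Lambda=\id$ on the diagonal produces a bijection $\theta\colon X\setminus F\to Y\setminus G$ between cofinite subsets.

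It remains to verify that $\theta$ and $\theta^{-1}$ are coarse; this is the only part that uses $\Lambda$ beyond the diagonal. Fix $R$ and infinite $A,A'\subseteq X$ with $d(A,A')\le R$, and a partial isometry $v\in\cstu(X)$ of propagation at most $R$ with $v^{*}v\le\chi_{A'}$ and $vv^{*}\le\chi_{A}$; then $\Lambda(\pi_X(v))$ is a partial isometry of $\roeq(Y)$ whose source and range projections are dominated by $\pi_Y(\chi_{\theta(A')})$ and $\pi_Y(\chi_{\theta(A)})$. Lifting $\Lambda(\pi_X(v))$ to $\cstu(Y)$ and approximating it by a finite-propagation operator yields, modulo a compact, a partial isometry in $\cstu(Y)$ of propagation at most $S$ joining most of $\theta(A')$ to most of $\theta(A)$, which forces $\theta(A)$ and $\theta(A')$ to lie within distance $S$ in $Y$ up to finite sets; hence $\theta$ is coarse, and the symmetric argument with $\Psi$ in place of $\Phi$ shows $\theta^{-1}$ is coarse. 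The delicate point --- and the second essential use of property A, now of $Y$ --- is that $S$ can be taken to depend only on $R$ and not on $v$, which is precisely a form of the operator-norm localization available for property-A spaces. With $\theta$ a bijective coarse equivalence of $X\setminus F$ and $Y\setminus G$, condition~\eqref{ItemEquivv} follows. I expect the main obstacle to be exactly these two property-A steps --- rigidifying $\Phi$ to an asymptotically diagonal map, and securing the \emph{uniformity} of the finite-propagation approximation; the rest is bookkeeping with finite sets and routine perturbation estimates.
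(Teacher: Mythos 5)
Your opening reduction — the corollary is immediate from Theorem~\ref{ThmRigidityRoeCoronaPropertyA} together with Theorem~\ref{T:blackbox} — is exactly the paper's proof of the corollary, and your argument for \eqref{ItemEquivv}$\Rightarrow$\eqref{ItemLiftIso} of the theorem matches the paper's. The gap is in your sketch of \eqref{ItemLiftIso}$\Rightarrow$\eqref{ItemEquivv}: the ``key step'' asserting that $\Phi$ is asymptotically diagonal, i.e.\ $\Phi(\ell_\infty(X))\subseteq \ell_\infty(Y)+\cK(\ell_2(Y))$ (equivalently, that $\Lambda$ maps $\ell_\infty(X)/c_0(X)$ into $\ell_\infty(Y)/c_0(Y)$), is false in general. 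Take $X=Y=\NN$ with the usual metric and let $u\in\cstu(\NN)$ be the block-diagonal unitary acting on each $\mathrm{span}\{\delta_{2n},\delta_{2n+1}\}$ by rotation by $\pi/4$; it has propagation $1$, so $\Ad_u$ is an automorphism of $\cstu(\NN)$ preserving the compacts, and the induced automorphism $\Lambda$ of $\roeq(\NN)$ is liftable on diagonals (lift $\Lambda$ by $a\mapsto uau^*$ on $\ell_\infty$, and $\Lambda^{-1}$ by $a\mapsto u^*au$). Here each $p_x=\Phi(e_{xx})$ is a rank-one projection spread over two adjacent points with both diagonal entries equal to $1/2$, and $\Phi(\chi_S)$ for $S$ the even numbers has off-diagonal entries $\pm 1/2$ that do not vanish at infinity, so it lies outside $\ell_\infty(\NN)+\cK(\ell_2(\NN))$. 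No noncompact ghost appears because the spreading is over \emph{bounded} distances — precisely the case your sparse-subspace/ghost argument does not address (it only rules out spreading over unbounded distances). Consequently your rounded diagonals $\{y\colon \langle p_x\delta_y,\delta_y\rangle>1/2\}$ can be empty for every $x$, and the bijection $\theta$ you propose to extract from them need not exist; the subsequent coarseness argument then has nothing to act on.

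The paper's route avoids this entirely: after Proposition~\ref{X:PropCofiniteSubRank1} gives that $\Phi(e_{xx})$ and $\Psi(e_{yy})$ are rank one off finite sets, it never tries to diagonalize $\Lambda$. Instead it works with the sets $Y_{x,\delta}=\{y\colon\|\Phi(e_{xx})e_{yy}\|\geq\delta\}$ and $X_{y,\delta}$, uses ONL (Lemmas~\ref{LemmaUdelta} and~\ref{LemmaONL}) and the interplay between $\Phi$ and $\Psi$ (Lemma~\ref{LemmaLimInf}) to prove the counting estimates $|B|\leq|X_{B,\delta}|$ and $|A|\leq|Y_{A,\delta}|$ (Lemma~\ref{LemmaCardinalityXBDelta}), and then invokes Hall's marriage theorem (Lemma~\ref{LemmaExistenceInjection}) to get injections $f\colon X'\to Y$ and $g\colon Y'\to X$ with $f(x)\in Y_{x,\delta}$, $g(y)\in X_{y,\delta}$; K\"onig's proof of Cantor--Schr\"oder--Bernstein then assembles a bijection $h$ between cofinite subsets, and coarseness of $h$ and $h^{-1}$ follows from Lemma~\ref{LemmaTheMapsAreCoarse} (via the rank-one norm identity of Lemma~\ref{PropNormEqualityRank1}), which is the uniform, property-A-flavored step you correctly anticipated at the end. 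In short: the missing idea is the marriage-theorem counting argument that replaces your (unobtainable) asymptotic diagonality as the source of the bijection.
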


By translating two results of S. Ghasemi (\cite[Theorem~1.2]{Ghasemi.FFV} and \cite{Gha:FDD}) into the language of uniform Roe coronas, and dropping uniform local finiteness, we obtain the following independence result.

\begin{theorem}\label{T.Independent}
There are locally finite metric spaces $X$ and $Y$ such that the assertion $\roeq(X)\cong \roeq(Y)$
is independent from ZFC. 
\end{theorem}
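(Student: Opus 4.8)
The plan is to reduce Theorem~\ref{T.Independent} to the two results of Ghasemi cited, by exhibiting a way of encoding an arbitrary ``reduced product'' or ``countable direct sum'' corona of the type treated in \cite{Ghasemi.FFV,Gha:FDD} as a uniform Roe corona $\roeq(X)$ for a suitable locally finite metric space $X$. First I would recall Ghasemi's framework: one is given a sequence of finite-dimensional \cstar-algebras $(\cstarx{F}_n)_n$ (for \cite{Ghasemi.FFV}, full matrix algebras $M_{k_n}(\mathbb C)$, while \cite{Gha:FDD} deals with more general finite-dimensional $\cstarx{F}_n$ via an FDD), forms $\prod_n \cstarx{F}_n / \bigoplus_n \cstarx{F}_n$, and shows that whether two such coronas are isomorphic can be made independent of ZFC---under $\mathrm{OCA}_\infty+\mathrm{MA}_{\aleph_1}$ an isomorphism forces a ``trivial'' matching of the building blocks, while under $\mathrm{CH}$ (or just via a nontrivial automorphism of $\cPNF$) extra isomorphisms appear. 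So it suffices to find locally finite metric spaces $X,Y$ whose uniform Roe coronas are exactly (or are isomorphic to) such reduced products, chosen so that the ``trivial matching'' of blocks is impossible but the ``CH-matching'' is possible.

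The key geometric input is the notion of a \emph{sparse} space from the excerpt: if $X=\bigsqcup_n X_n$ with $\card{X_n}=k_n$ and $d(X_n,X_m)\to\infty$, then any finite-propagation operator is, modulo $\mathcal K(\ell_2(X))$, block-diagonal along the $X_n$'s, so one gets a canonical identification
\[
\roeq(X)\ \cong\ \prod_n M_{k_n}(\mathbb C)\,\big/\,\bigoplus_n M_{k_n}(\mathbb C).
\]
(This is exactly the degenerate case where $\cstu(X)=\prod_n^{\mathrm{norm\text{-}bdd}} M_{k_n}$ and the compacts are $c_0$-sums; one should state and prove this as a preliminary lemma, being careful that propagation bounds really do collapse cross terms since the gaps grow to infinity.) With this identification, $X$ and $Y$ given by two sparse partitions with block-size sequences $(k_n)$ and $(l_n)$ have $\roeq(X)\cong\roeq(Y)$ if and only if the corresponding matrix-algebra reduced products are isomorphic. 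Now I would pick $(k_n)$ and $(l_n)$ to be the sequences Ghasemi uses in \cite{Ghasemi.FFV} to witness independence---e.g. sequences that agree ``along an ultrafilter'' or have the same tail behavior only after a reindexing not realized by any injection with finite-to-one inverse, so that a trivial (``continuous''/ground-model-coded) isomorphism is ruled out by a simple counting/dimension argument while a $\mathrm{CH}$-built isomorphism exists.

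Concretely the steps are: (1) prove the structural lemma $\roeq(X)\cong\prod_n M_{k_n}/\bigoplus_n M_{k_n}$ for $X$ sparse with $\card{X_n}=k_n$; (2) quote \cite[Theorem~1.2]{Ghasemi.FFV} (and, if one wants the extra generality with non-matrix blocks or to match \cite{Gha:FDD} verbatim, note that those building blocks can themselves be realized on a sparse space by putting on each $X_n$ a metric/graph making $\cstu(X_n)=\cstarx{F}_n$---but for a clean statement matrices suffice); (3) choose the two block-size sequences so that Ghasemi's independence applies, obtaining locally finite $X,Y$ with $\roeq(X)\cong\roeq(Y)$ independent of ZFC; (4) remark that these $X,Y$ are necessarily \emph{not} uniformly locally finite (the $k_n$ are unbounded), which is consistent with---indeed forced by---Corollary~\ref{Cor.ThmRigidityRoeCorona}, since two non-coarsely-equivalent u.l.f.\ spaces could not have consistently-isomorphic coronas. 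The main obstacle I anticipate is step~(1): one must verify carefully that the ideal being quotiented in $\cstu(X)$ is genuinely $\mathcal K(\ell_2(X))$ and that it corresponds to $\bigoplus_n M_{k_n}$ under the block-diagonalization (every compact operator is a norm-limit of finite-rank ones, hence lies in the $c_0$-sum, and conversely), together with checking that no finite-propagation operator has nonzero ``off-diagonal'' blocks in the corona---this uses $d(X_n,X_m)\to\infty$ as $n+m\to\infty$ in an essential way, and a sparse space with only $d(X_n,X_{n+1})\to\infty$ would not suffice. A secondary subtlety is matching the precise hypotheses of the cited Ghasemi theorems (which are stated for reduced products/FDD algebras) to the matrix-sequence form arising here; this is routine but should be spelled out.
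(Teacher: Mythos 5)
Your proposal is essentially the paper's proof: the paper realizes $\prod_n M_{g(n)}(\C)/\bigoplus_n M_{g(n)}(\C)$ as $\roeq(X(g))$ for the sparse space $X(g)\subseteq \N^2$ whose $n$-th column has $g(n)$ points and diameter $1$ (Lemma~\ref{L.X(g)} is exactly your step (1)), and then quotes \cite[Theorem~1.2]{Ghasemi.FFV} together with \cite{Gha:FDD} exactly as you do. The one point to make explicit in your structural lemma is that the blocks $X_n$ must have uniformly bounded diameter so that all of $\prod_n M_{k_n}(\C)$ (not merely its uniformly-finite-propagation part) lands inside $\cstu(X)$; the paper arranges this by giving each column diameter $1$.
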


This is a consequence of Theorem~\ref{T.Independent+}, where we construct a large family of spaces with this property. (We should note that the spaces constructed in Theorem~\ref{T.Independent} are 
not uniformly locally finite, and $\cstu(X)$ and $\cstu(Y)$ have noncompact ghost projections.)

Similarly, classical results of W. Rudin and S. Shelah imply that 
there exists a uniformly locally finite metric space~$X$ such that the 
assertion `Every automorphism of $\roeq(X)$ is liftable on the diagonals'
is independent from ZFC (Example~\ref{Ex:RudinShelah}).

The paper is organized as follows. In \S\ref{SectionPrelim}, we present all the notation and terminology needed for these notes. In particular, in \S\ref{SubsectionSetTheoryForcingAxiom}, we present the set theoretical axioms $\OCA$ and $\mathrm{MA}_{\aleph_1}$ as well as Theorem \ref{ThmAlessandroPaulHomomorphism}, which is our main tool in order to obtain Corollary \ref{Cor.ThmRigidityRoeCorona} and Corollary \ref{C:ThmRigidityRoeCoronaPropertyA}. In \S\ref{SectionCoarseLike}, we show that the liftings obtained by Theorem \ref{ThmAlessandroPaulHomomorphism} are coarse-like (see Definition~\ref{Def.CoarseLike+} below), and \S\ref{SectionGeomCond} is dedicated to the technical lemmas which depend on the geometric properties of our metric spaces. Theorem \ref{ThmRigidityRoeCorona} and Theorem \ref{ThmRigidityRoeCoronaPropertyA} are proved in \S\ref{SectionCoarseEquiv} and \S\ref{SectionBijCoarseEquiv}, respectively. At last, in \S\ref{sec:counterexamples}, we construct a class of locally finite metric spaces for which the existence of isomorphisms between their uniform Roe coronas is independent from ZFC.

\section{Preliminaries}\label{SectionPrelim}

\subsection{Uniform Roe algebras and uniform Roe coronas}
Given a complex Hilbert space $H$, $\cB(H)$ denotes the space of bounded operators on $H$, and $\cK(H)$ the space of compact operators on $H$. If $X$ is a set, $\ell_2(X)$ is the complex Hilbert space of square summable sequences indexed by $X$, with canonical basis $\{\delta_x\}_{x\in X}$. Denote by $\pi_X$ the canonical quotient map \allowbreak $\pi_X\colon \cB(\ell_2(X))\to \cB(\ell_2(X))/\cK(\ell_2(X))$. The \emph{support of} $a\in\cB(\ell_2(X))$ is defined as 
\[
\supp(a)=\{(x,y)\in X\times X\colon\langle a\delta_x, \delta_y\rangle\neq 0\}.
\] 
Writing $\Delta_X=\{(x,x)\colon x\in X\}$, 
the algebra $\ell_\infty(X)$ is naturally identified with the subalgebra $\{a\in\cB(\ell_2(X))\colon \supp(a)\subseteq \Delta_X)\}$. Given $x,y\in X$, denote by $e_{yx}$ the operator in $\cB(\ell_2(X))$ given by 
\[e_{yx}(\delta_z)=\langle \delta_z,\delta_x\rangle\delta_y,\]
for all $z\in X$. Given $A\subseteq X$, write $\chi_A=\sum_{x\in A}e_{xx}$, so $\chi_A\in \ell_\infty(X)$.

If $X$ is a set and $X'\subseteq X$, we identify $\cB(\ell_2(X'))$ with a subalgebra of $\cB(\ell_2(X))$ in the natural way. If $(X_n)_n$ is a sequence of disjoint subsets of $X$, $\prod_n\cB(\ell_2(X_n))$ is identified with a subalgebra of $\cB(\ell_2(X))$.

If $(X,d)$ is a metric space, we say that $a\in\cB(\ell_2(X))$ \emph{has propagation at most $r$}, and write $\propg(a)\leq r$, if $d(x,y)\leq r$ whenever $(x,y)\in \supp(a)$.

\begin{definition}
Let $X$ be a countable metric space. The \emph{algebraic uniform Roe algebra} $\cstu[X]$ is the subalgebra of $\cB(\ell_2(X))$ of all operators of finite propagation. The \emph{uniform Roe algebra of} $X$, $\cstu(X)$, is the norm closure of $\cstu[X]$ in $\cB(\ell_2(X))$. The \emph{uniform Roe corona} is 
\[
\roeq(X)=\cstu(X)/\mathcal K(\ell_2(X)).
\]
\end{definition}

It is clear that $\propg(a)=0$ for all metric spaces $X$ and all $a\in \ell_\infty(X)$, so $\ell_\infty(X)\subseteq \cstu(X)$. In particular, $\chi_A\in \cstu(X)$, for all $A\subseteq X$. Notice that $\propg(\chi_Ab\chi_A)\leq\propg(b)$ for all $A\subseteq X$ and $b\in\cB(\ell_2(X))$. We will use this fact without any further mention.

Given a countable set $X$, $F\subseteq X$, and $a=(a_x)_{x\in X}\in \ell_\infty(X)$, an element $a\rs F\in\ell_\infty(X)$ is defined by 
\[
(a\restriction F)_x=\begin{cases}
a_{x},& \text{if }x\in F,\\
0,&\text{ otherwise.}
\end{cases}
\]
We identify 
$\ell_\infty(F)$ with the \cstar-subalgebra $\{a\in \ell_\infty(X)\colon a\rs F=a\}$
of $\ell_\infty(X)$.

\subsection{Coarse geometry of metric spaces}

Let $(X,d)$, $(Y,\partial)$ be metric spaces and $f\colon X\to Y$. A function $f$ is \emph{coarse} if 
\[
\sup\{\partial(f(x),f(y))\colon d(x,y)\leq t\}<\infty,
\]
for all $t\geq 0$, and $f$ is called \emph{expanding} if 
\[\lim_{t\to \infty}\inf\{\partial(f(x),f(y))\colon d(x,y)\geq t\}=\infty.\]
We say that $f$ is a \emph{coarse embedding} if it is both coarse and expanding. If~$Z$ is a set and $f,g\colon Z\to X$ are maps, we say that $f$ and $g$ are \emph{close} if \[\sup_{z\in Z}d(f(z),g(z))<\infty.\]
Two metric spaces $(X,d)$ and $(Y,\partial)$ are said to be \emph{coarsely equivalent} if there exist coarse functions $f\colon X\to Y$ and $g\colon Y\to X$ such that $g\circ f$ is close to $\Id_X$ and $f\circ g$ is close to $\Id_Y$. Notice that this automatically implies that $f$ and $g$ are expanding. The maps $f$ and $g$ are called \emph{coarse inverses} of each other.

 If there exists a bijection $f\colon X\to Y$ such that both $f$ and $f^{-1}$ are coarse, then $(X,d)$ and $(Y,\partial)$ are said to be \emph{bijectively coarsely equivalent}.

 For a metric space $(X,d)$, $x\in X$ and $r\geq 0$, denote by $B_r(x)$ the $d$-ball centered at $x$ of radius $r$. The metric space $(X,d)$ is said to be \emph{locally finite} if, for all $r\geq 0$ and all $x\in X$, $|B_r(x)|<\infty$, and \emph{uniformly locally finite} if $\sup_{x\in X}|B_r(x)|<\infty$, for all $r\geq 0$. Clearly, every locally finite metric space is countable and, if infinite, unbounded. 

The following simple lemma was promised in the introduction. 

\begin{lemma} \label{L.ulf.ghost} 
If $X$ is locally finite and 
every sparse subspace of $X$ yields only compact ghost projections, then $X$ is
uniformly locally finite. 
\end{lemma}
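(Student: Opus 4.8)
The statement is the contrapositive of: if $X$ is locally finite but \emph{not} uniformly locally finite, then $X$ has a sparse subspace carrying a noncompact ghost projection. So I would start by assuming $X$ is locally finite and $\sup_{x\in X}|B_r(x)|=\infty$ for some fixed $r\geq 0$. Using local finiteness (each ball is finite) together with the failure of uniform local finiteness, I would recursively pick points $x_n\in X$ and radii along which the balls $B_r(x_n)$ grow in cardinality, while also ensuring the chosen balls are spread far apart in the metric. Concretely: having chosen $x_0,\dots,x_{n-1}$, the set $\bigcup_{k<n}B_{Rn}(x_k)$ is finite (local finiteness), so I can pick $x_n$ outside it with $|B_r(x_n)|\geq n$; doing this with the radii $Rn$ growing fast enough guarantees $d\big(B_r(x_n),B_r(x_m)\big)\to\infty$ as $n+m\to\infty$. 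Then the subspace $Z=\bigsqcup_n F_n$ with $F_n\subseteq B_r(x_n)$ a finite set of size exactly $n$ (say) witnesses that $Z$ is sparse, since the blocks $F_n$ are finite and pairwise distances diverge.

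Next I would produce a noncompact ghost projection in $\cstu(Z)$. On each finite block $F_n$ of size $n$, let $p_n\in\cB(\ell_2(F_n))$ be the rank-one projection onto the unit vector $\xi_n=\frac{1}{\sqrt n}\sum_{x\in F_n}\delta_x$. Set $p=\sum_n p_n\in\prod_n\cB(\ell_2(F_n))\subseteq\cB(\ell_2(Z))$. Since each $p_n$ has propagation at most $\diam(F_n)\leq 2r$, and the blocks are at mutually infinite distance, $p$ has propagation at most $2r$, hence $p\in\cstu(Z)$; it is clearly a projection. It is noncompact because it has infinite rank (one dimension per block). Finally, $p$ is a ghost: for $x\in F_n$, $y\in F_m$ one computes $\langle p\delta_x,\delta_y\rangle=\frac1n$ if $m=n$ and $0$ otherwise, so $|\langle p\delta_x,\delta_y\rangle|\leq \frac1n$; given $\eps>0$, taking $F=\bigcup_{n\leq 1/\eps}F_n$ (a bounded set, as it is a finite union of finite sets) shows $|\langle p\delta_x,\delta_y\rangle|<\eps$ for all $x,y\notin F$. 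Thus $Z$ is a sparse subspace of $X$ with a noncompact ghost projection, contradicting the hypothesis.

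The one point requiring a little care — and the main (minor) obstacle — is the simultaneous bookkeeping in the recursion: I must interleave the choice of centers $x_n$ (growing ball sizes) with the choice of separation radii so that both ``$|F_n|\to\infty$'' and ``$d(F_n,F_m)\to\infty$ as $n+m\to\infty$'' hold, and this uses local finiteness precisely to know that each finite union of balls $\bigcup_{k<n}B_{R_n}(x_k)$ is finite so that a suitable $x_n$ exists outside it. Everything else (propagation estimate, ghost estimate, noncompactness) is a direct computation with the explicitly defined $p_n$. Note also that $Z$ inherits the metric from $X$, so a ghost projection in $\cstu(Z)\subseteq\cstu(X)$ that is noncompact on $\ell_2(Z)$ does the job for the definition applied to the subspace $Z$.
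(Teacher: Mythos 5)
Your proposal is correct and follows essentially the same route as the paper: extract widely separated balls $B_r(x_n)$ of growing cardinality (the paper fixes the $x_n$ first and passes to a subsequence, you run an explicit recursion — an inessential difference), observe the resulting union is sparse, and exhibit the noncompact ghost projection $\sum_n p_n$ where $p_n$ is exactly the paper's rank-one projection onto the constant functions on the $n$-th block. Your explicit verification of the ghost and propagation estimates matches what the paper leaves as a one-line computation.
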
 

\begin{proof} Suppose $X$ is not uniformly locally finite. Fix $r<\infty$ such that 
there are $x_n\in X$ satisfying $|B_r(x_n)|\geq n$. Let $X_n=B_r(x_n)$. 
Since $X$ is locally finite, by going to a subsequence we can assure that $d(X_m,X_n)\geq m+n$ for all 
$m\neq n$. Then $\tilde X=\bigcup_n X_n$ is a sparse subspace of $X$. 

Consider the rank 1 projection $p_n$ onto the constant functions in $\ell_2(X_n)$. 
 Its propagation is at most $r$ and $\langle p_n\delta_x, \delta_{x'}\rangle =1/|X_n|$ for all $x$ and $x'$ in $X_n$. 
Therefore $\tilde X$ yields a noncompact ghost projection $\sum_n p_n$. 
\end{proof} 

\subsection{Set theory: forcing axioms}\label{SubsectionSetTheoryForcingAxiom}
In this subsection we state the additional set-theoretic 
axioms used in the proof of Theorem~\ref{T:blackbox}. 
These axioms will not be used directly in the present paper, and the reader can skip ahead to the 
next subsection. 

We will now state Todor\v{c}evi\'c' Open Colouring Axiom ($\OCA$), introduced in \cite{Todorcevic.PPIT}. 
If $\mathcal X$ is a set, $[\mathcal X]^2$ denotes the set of unordered pairs of elements of $\mathcal X$.
Subsets of $[\mathcal X]^2$ are identified with symmetric subsets of $\mathcal X^2\setminus \Delta_{\mathcal X}$, thus giving meaning to the phrase `an open subset of $[\mathcal X]^2$' in case $\mathcal X$ is a topological space.

\begin{OCAT}
Let $\mathcal X$ be a separable metric space. If $[X]^2=K_0\sqcup K_1$ where $K_0$ is open then one of the following applies. 
\begin{enumerate}
 \item there is an uncountable $Y\subseteq X$ such that $[Y]^2\subseteq K_0$, or
 \item there are sets $X_n\subseteq X$, for $n\in\N$, such that $X=\bigcup_n X_n$ and $[X_n]^2\subseteq K_1$ for all $n\in\N$.
\end{enumerate}
\end{OCAT}

$\OCA$ is a consequence of the Proper Forcing Axiom (\cite{Sh:PIF}, \cite[p. 382]{Ku:Set}) and it is relatively consistent with $\ZFC$. In a previous version of this manuscript, we used of an apparently stronger version of $\OCA$ known as $\OCAinf$, introduced  in~\cite{Farah.Cauchy}. Recently $\OCAinf$ was shown to be a consequence of $\OCA$ (see \cite{Moore.OCA}, also \cite[Theorem~8.6.6]{Fa:Combinatorial}). This in particular implies that $\OCAinf$ can be replaced with $\OCA$ in the assumptions of  \cite[Theorem 9.4]{MKV.FA}. 

We proceed to state Martin's Axiom after some forcing terminology (see~\cite{Ku:Set} for many more details). 
\begin{definition} \label{Def:Poset}
Two elements $p$ and $q$ of a partial order $(\PP,\leq)$ are \emph{compatible} if there exists $r\in \PP$ such that $r\leq p$ and $r\leq q$, and 
\emph{incompatible} otherwise. 
 A partial order $(\PP,\leq)$ is said to have the \emph{countable chain condition} (\emph{ccc}) if there is no uncountable set of pairwise incompatible elements in $\PP$. A set $D\subseteq \PP$ is \emph{dense} if $\forall p\in\PP\exists q\in D$ with $q\leq p$. A subset $G$ of $\PP$ is a \emph{filter} if it is upward closed and for any $p,q\in G$, there is some $r\in G$ such that $r\le p$ and $r\le q$. 
\end{definition} 

The following is Martin's Axiom at the cardinal $\kappa$.

\begin{MAkappa}
For every poset $(\PP,\leq)$ that has the ccc, and every family of dense subsets $D_\alpha\subseteq \PP$ ($\alpha < \kappa$), there is a filter $G\subseteq \PP$ such that $G\cap D_\alpha\neq\emptyset$ for every $\alpha < \kappa$.
\end{MAkappa}
\noindent$\MA_{\aleph_0}$ is a theorem of $\ZFC$ (it is a close relative to the Baire Category Theorem), as is the negation of
$\MA_{2^{\aleph_0}}$.

Both $\OCA$ and $\MA_{\aleph_1}$ are consequences of Shelah's Proper Forcing Axiom, PFA. 
Unlike PFA, their relative consistency with $\ZFC$ does not require any large cardinal assumptions (see \cite[p. 376]{Ku:Set}). Each of these axioms contradicts the Continuum Hypothesis. In operator algebras, forcing axioms have been used to imply rigidity phenomena for isomorphisms of corona \cstar-algebras (see \cite[\S 7.1]{Fa:Logic}, \cite{McKenney.UHF}, \cite{MKV.FA},  \cite{vignati2018rigidity}). Notably, $\OCA$ implies that all automorphisms of the Calkin algebra are inner (\cite[Theorem 1]{Farah.C}). On the other hand, the Continuum Hypothesis implies the existence of an outer automorphism (see \cite[Theorem~2.4]{Phillips-Weaver}).

\subsection{Liftings and nonmeager ideals}\label{SubsectionIdealsLiftings} Our proof of Theorem~\ref{T:blackbox} 
proceeds in two stages: First by finding maps that lift a given $^*$-homomorphism on a `large' set, and second, by showing that these maps actually lift it everywhere. In this
subsection we isolate this largeness property. 

If $X$ is a set then $\SI\subseteq \mathcal P(X)$ is an \emph{ideal} if it is closed under subsets and finite unions. (In other words, it is an ideal of the Boolean ring $\cP(X)$.) An ideal $\SI\subseteq \cP(X)$ is \emph{dense} if it contains all finite subsets of $X$ and for every infinite $S\subseteq X$ there is an infinite $T\subseteq S$ with $T\in \SI$.
This is easily proved to be equivalent to the conjunction of $\SI$ being dense in the poset $(\{X'\subseteq X\colon X'$ is infinite$\},\subseteq)$ in the sense of Definition~\ref{Def:Poset},  and $\SI$ being dense in the Cantor set topology on $\cP(X)$ -- i.e., $\cP(X)$ is identified with $2^X$. If $X$ is countable, this topology on $\cP(X)$ is compact and metric, and we can talk about the topological properties of certain ideals. 

A proof of the following classical result can be found, e.g., in \cite[\S3.10]{FarahBook2000}. 

\begin{proposition}[Jalali--Naini, Talagrand] \label{T:J-NT}
Suppose $X$ is countable and $\SI\subseteq \mathcal P(X)$ is an ideal containing all finite subsets of $X$. Then~$\SI$ 
is nonmeager if and only if for every sequence $\{I_n\}$ of disjoint finite subsets of $X$ 
 there is an infinite $L\subseteq\bbN$ such that $\bigcup_{n\in L} I_n\in\SI$. In particular, if $\SI$ is nonmeager, then $\SI$ is dense. \qed
 \end{proposition}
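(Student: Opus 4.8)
The plan is to identify $\mathcal P(X)$ with $2^X$, which is compact and metrizable since $X$ is countable, and to note at the outset that, being an ideal containing every finite subset of $X$, $\SI$ is invariant under finite symmetric differences: if $A\in\SI$ and $A\bigtriangleup B$ is finite, then $B\in\SI$. I would then prove the two implications separately.

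For the implication ``$\SI$ nonmeager $\Rightarrow$ the combinatorial property'', fix a sequence $\{I_n\}$ of pairwise disjoint finite subsets of $X$; I may assume infinitely many of them are nonempty, since otherwise $\bigcup_{n\in L}I_n=\emptyset\in\SI$ for a cofinite set $L$ of indices and there is nothing to prove. Consider $M=\set{A\subseteq X}{\{n: I_n\subseteq A\}\text{ is finite}}$. For each $N$ the set $M_N=\set{A\subseteq X}{I_n\not\subseteq A\text{ for all }n\geq N}$ is closed (a countable intersection of clopen sets) and nowhere dense: any nonempty basic clopen set can be shrunk by filling in, all at once, the coordinates of some block $I_n$ with $n\geq N$ disjoint from the finite domain of the defining condition. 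Hence $M=\bigcup_N M_N$ is a meager $F_\sigma$ set, so the nonmeager $\SI$ is not contained in $M$; picking $B\in\SI\setminus M$ and setting $L:=\{n: I_n\subseteq B\}$, we get $L$ infinite and $\bigcup_{n\in L}I_n\subseteq B$, whence $\bigcup_{n\in L}I_n\in\SI$, as required. Applying this to the sequence of singletons enumerating an arbitrary infinite $S\subseteq X$ immediately yields the final ``in particular'' clause: a nonmeager ideal contains an infinite subset of every infinite set, and so is dense.

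For the converse I would argue by contraposition: assuming $\SI$ is meager, write $\SI\subseteq\bigcup_m F_m$ with each $F_m$ closed and nowhere dense and $F_m\subseteq F_{m+1}$, and construct pairwise disjoint finite blocks $I_n$ (taken to be consecutive intervals) by a fusion recursion that diagonalizes against the $F_m$, arranging that every set $\bigcup_{n\in L}I_n$ with $L$ infinite escapes each $F_m$ and hence lies outside $\SI$. The main obstacle, and the delicate point of this direction, is that nowhere density of $F_m$ only guarantees that an \emph{arbitrary} finite condition can be extended to one escaping $F_m$, whereas the sets $\bigcup_{n\in L}I_n$ realize only the ``all of $I_n$'' and ``none of $I_n$'' patterns on each block; reconciling these requires bundling several consecutive blocks together at each step of the recursion and invoking the finite-modification invariance of $\SI$ to absorb the resulting bounded discrepancies. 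This is precisely the Jalali--Naini/Talagrand combinatorial characterization of meager ideals, whose full details I would take from \cite[\S3.10]{FarahBook2000}, as indicated in the statement.
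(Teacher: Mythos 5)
Your proposal is correct, and it actually does more than the paper, which states this proposition with a \verb|\qed| and simply cites \cite[\S3.10]{FarahBook2000} for the proof. Your argument for the direction ``nonmeager $\Rightarrow$ combinatorial property'' is complete and sound: the sets $M_N=\set{A}{I_n\not\subseteq A\text{ for all }n\ge N}$ are indeed closed, and nowhere dense because any finite condition meets only finitely many of the pairwise disjoint blocks $I_n$, so some nonempty block with $n\ge N$ can be filled in; hence $M=\bigcup_N M_N$ is meager, a $B\in\SI\setminus M$ exists, and $L=\set{n}{I_n\subseteq B}$ works since $\SI$ is closed under subsets. The derivation of density from this (applying it to singletons enumerating an infinite $S$) is also exactly right. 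Notably, the forward implication and the density consequence are the only parts of the proposition the paper ever invokes (in Propositions~\ref{prop:stayinside} and~\ref{PropCofiniteSubRank1}), so you have fully proved everything that is actually used. For the converse you only sketch the fusion/diagonalization against an increasing sequence of nowhere dense closed sets and defer the details to the same reference the paper cites; that is where the genuine content of the Talagrand--Jalali--Naini theorem lies, and your identification of the delicate point (only the ``all of $I_n$ or none of $I_n$'' patterns are realized, requiring blocks to be bundled and finite modifications absorbed) is accurate, but as written that direction is a citation rather than a proof --- which is no worse than what the paper itself does.
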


 The following definitions are essential for our approach.

\begin{definition}\label{def:lift}
Suppose $A$ and $B$ are \cstar-algebras, $I$ and $J$ are two-sided, norm-closed, self-adjoint
ideals of $A$ and $B$, respectively, and $\pi_I$ and $\pi_J$ are the corresponding quotient maps.
 If $\Lambda\colon A/I\to B/J$ is a $^*$-homomorphism 
 then a map 
$\Phi\colon A\to B$ is said to \emph{lift} $\Lambda$ if 
$\Lambda(\pi_I(a))=\pi_J(\Phi(a))$ for all $a\in A$ (i.e., if 
the diagram in Fig.~\ref{Fig.Lift} commutes). 
 \begin{figure}[h]
\begin{tikzpicture}
\matrix[row sep=1cm, column sep=1.5cm]
{\node (An) {$A$} ;
& \node (B) {$B$}; \\
\node (An/) {$A/I$} ;
&\node (B/J) {$B/J$};
\\
}; 
\draw (An) edge [->] node [above] {$\Phi$} (B);
\draw (An/) edge [->] node [above] {$\Lambda$} (B/J);
\draw (An) edge [->] node [left] {$\pi_I$} (An/);
\draw (B) edge [->] node [right] {$\pi_J$} (B/J);
\end{tikzpicture}
\caption{\label{Fig.Lift} The map $\Phi$ lifts $\Lambda$.}
\end{figure}
If $Z\subseteq A$ and the equality $\Lambda(\pi_I(a))=\pi_J(\Phi(a))$ 
holds for all $a\in Z$, we say that $\Phi$ \emph{lifts $\Lambda$ on $Z/I$}. 
\end{definition} 
The Axiom of Choice implies that every $\Lambda$ is lifted by some $\Phi$. 
We are interested in the existence of lifts with additional algebraic or topological properties, such 
as being a $^*$-homomorphism or (in the case when $A$ and $B$ are
the multiplier algebras of $I$ and $J$, respectively) being continuous with respect to the strict topologies associated with 
$I$ and $J$.

For any $X$, identifying $\ell_\infty(X)$ with a \cstar-subalgebra of $\cB(\ell_2(X))$, it follows that $\ell_\infty(X)\cap \cK(\ell_2(X))=c_0(X)$. So, we can identify $\ell_\infty(X)/c_0(X)$ with a \cstar-subalgebra of the Calkin algebra $\cB(\ell_2(X))/\cK(\ell_2(X))$. In particular, if~$X$ is a metric space, $\ell_\infty(X)/c_0(X)$ is a \cstar-subalgebra of~$\roeq(X)$.

\begin{definition}\label{almost.Def.LiftOnDiagIso}
Let $X$ and $Y$ be countable metric spaces.
A $^*$-ho\-mo\-mor\-phi\-sm
 $\Lambda\colon \roeq(X)\to \roeq(Y)$ is \emph{almost liftable on the diagonal} if there is  a nonmeager ideal $\SI$ on $X$ such that some  strongly continuous $^*$-homomorphism $\Phi\colon \ell_\infty(X)\to \cB(\ell_2(Y))$  lifts $\Lambda$ on $\{\pi_X(\chi_S)\colon S\in \SI\}$. 
\end{definition}

Note that it is not required that the range of $\Phi$ is included in $\cstu(Y)$. On the other hand this will always be the case (see Proposition~\ref{prop:stayinside}).
We can now state the lifting theorem which will play a crucial role throughout these notes.

\begin{theorem}\label{ThmAlessandroPaulHomomorphism}
Assume $\OCA$ and $\mathrm{MA}_{\aleph_1}$. Suppose $X$ and $Y$ are countable metric spaces. Then every unital injective $^*$-homomorphism 
 of $\Lambda\colon \roeq(X)\to \roeq(Y)$ is almost liftable on the diagonal. 
 \end{theorem}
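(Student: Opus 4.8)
The plan is to reduce Theorem~\ref{ThmAlessandroPaulHomomorphism} to a `black box' that is already available in the literature, specifically the rigidity machinery of \cite{MKV.FA} for $\sigma$-narrow sub-$\Cstar$-algebras of coronas, and then check that $\ell_\infty(X)/c_0(X) \subseteq \roeq(X)$ together with the ambient pair $(\cstu(X), \cK(\ell_2(X)))$ satisfies the structural hypotheses needed to invoke it. The key observation is that $\roeq(X)$ is a quotient of the separably-acting $\Cstar$-algebra $\cstu(X)$ by the ideal $\cK(\ell_2(X))$, hence sits inside the Calkin algebra, and $\ell_\infty(X)/c_0(X)$ is an atomic abelian sub-$\Cstar$-algebra with a distinguished approximate unit $(\chi_F)_{F}$ running over finite subsets of $X$. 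The structure one needs is a reduced-product-type decomposition: writing $X = \bigsqcup_n F_n$ for some partition into finite pieces, one has $\ell_\infty(X)/c_0(X) \cong \prod_n \ell_\infty(F_n)/\bigoplus_n \ell_\infty(F_n)$, a reduced product of finite-dimensional abelian algebras, so the rigidity results on homomorphisms between reduced products (or, in the terminology of \cite{MKV.FA}, the existence of strongly continuous liftings on a nonmeager ideal) apply directly.

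First I would set up the framework: fix enumerations of $X$ and $Y$ as increasing unions of finite sets, identify $\ell_\infty(X)/c_0(X)$ and $\ell_\infty(Y)/c_0(Y)$ with the corresponding reduced products, and note that $\Lambda$ restricts to a unital injective $^*$-homomorphism $\ell_\infty(X)/c_0(X) \to \roeq(Y) \subseteq \cB(\ell_2(Y))/\cK(\ell_2(Y))$, landing (a priori) in the Calkin algebra. Next I would invoke $\OCA_\infty$ and $\MA_{\aleph_1}$ via the lifting theorem of \cite{MKV.FA}, which asserts that under these axioms any $^*$-homomorphism from such a reduced product of finite-dimensional algebras into a corona admits, on a nonmeager (equivalently, by Proposition~\ref{T:J-NT}, dense) ideal $\SI$ of indices, a lifting that is additionally \emph{continuous} for the relevant strict/strong topologies. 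Concretely this produces a strongly continuous $^*$-homomorphism $\Phi_0$ defined on a $c_0$-sum-type subalgebra indexed by $\SI$; one then extends $\Phi_0$ by strong continuity and the universal property of $\ell_\infty(X)$ (it is a von Neumann algebra, the strong closure of $c_{00}$-supported elements along $\SI$) to a strongly continuous $^*$-homomorphism $\Phi\colon \ell_\infty(X) \to \cB(\ell_2(Y))$ which still lifts $\Lambda$ on $\{\pi_X(\chi_S) : S \in \SI\}$. The nonmeagerness of $\SI$ is exactly what the Jalali--Naini--Talagrand criterion (Proposition~\ref{T:J-NT}) packages, and it is also what guarantees density, so the conclusion matches Definition~\ref{almost.Def.LiftOnDiagIso} on the nose.

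**The main obstacle** I expect is not the abstract citation but the bookkeeping needed to match hypotheses: one must verify that the restriction of $\Lambda$ to $\ell_\infty(X)/c_0(X)$ really does fall under the scope of the \cite{MKV.FA} machinery, which typically requires the target to be (or be embeddable in) a suitable corona and the source to be a reduced product of separable (here, finite-dimensional) pieces with a specified `metric' structure for the $\OCA_\infty$ dichotomy. Since $\roeq(Y)$ need not itself be a corona of a separable ideal in the usual sense, the cleanest route is to work inside the full Calkin algebra $\cB(\ell_2(Y))/\cK(\ell_2(Y))$ — which \emph{is} a genuine corona — prove the lifting there, and only afterwards (in Proposition~\ref{prop:stayinside}, cited as forthcoming) worry about whether $\ran(\Phi)$ lands in $\cstu(Y)$; the present theorem explicitly does not claim that. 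A secondary technical point is ensuring \emph{strong} continuity of the extension $\Phi$ rather than merely strict continuity: this uses that $\ell_\infty(X)$ is a von Neumann algebra and that $\Phi$ restricted to each finite block $\ell_\infty(F_n)$ is automatically normal, so that strong continuity on the whole algebra follows from strong continuity on the net of finite partial sums $\chi_{F_0 \cup \dots \cup F_n}$, which is where injectivity of $\Lambda$ (hence of the restriction) is used to rule out degeneracies. Once these compatibility checks are in place, the theorem is essentially a translation of \cite[the relevant lifting theorem]{MKV.FA} into the language of uniform Roe coronas.
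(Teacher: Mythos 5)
Your proposal is correct and follows essentially the same route as the paper: the paper's entire proof is the observation that this is the special case of the lifting theorem \cite[Theorem~8.4]{MKV.FA} with one-dimensional blocks ($k(n)=1$), $A=\cK(\ell_2(Y))$, and $\roeq(Y)$ viewed as a subalgebra of the Calkin algebra of $\ell_2(Y)$. Your additional bookkeeping (reduced-product presentation of $\ell_\infty(X)/c_0(X)$, working in the full Calkin algebra and deferring the question of whether the range lands in $\cstu(Y)$ to Proposition~\ref{prop:stayinside}) is exactly the intended reading of that citation.
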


\begin{proof} This is a special case of \cite[Theorem~9.4]{MKV.FA} when 
$k(n)=1$ for all $n$, $A=\cK(\ell_2(Y))$, and $\roeq(Y)$ is identified with a subalgebra of the Calkin algebra $\cB(\ell_2(Y))/\cK(\ell_2(Y))$.
\end{proof} 

We should point out that Theorem~\ref{ThmAlessandroPaulHomomorphism} 
is a very special case of \cite[Theorem~9.4]{MKV.FA}. The proof of the former 
can also be extracted from the (much easier) proof of \cite[Proposition~7.1]{Farah.C}, where this was proved
under the additional (unnecessary) assumption that $\Lambda$ is the restriction of an automorphism
of the Calkin algebra.


\section{Coarse-like property} \label{SectionCoarseLike}

In this section $X$ will always denote a countable metric space. 
The main result of the present section, Proposition~\ref{prop:stayinside}, shows 
that the lifts $\ell_\infty(X)\to \cstu(Y)$ provided by 
Theorem~\ref{ThmAlessandroPaulHomomorphism} are coarse-like (see below).
The following is \cite[Definition~4.3]{BragaFarah2018Trans}.

\begin{definition} \label{Def.CoarseLike} 
Let $\eps>0$, and $k\in\N$. An operator $a\in \cB(\ell_2(X))$ can be \emph{$\eps$-$k$-approximated} if there exists $b\in\cB(\ell_2(X))$ with propagation at most $k$ such that $\|a-b\|\leq\eps$. We say that $b$ is an \emph{$\eps$-$k$-approximation} of $a$. 
\end{definition} 
With this definition, an operator $a\in\cB(\ell_2(X))$ is in $\cstu(X)$ if and only if for all $\epsilon>0$ there is $k\in\N$ such that $a$ can be $\epsilon$-$k$-approximated.

The following definition was already implicit in \cite[Theorem 4.4]{BragaFarah2018Trans}.

\begin{definition}\label{Def.CoarseLike+}
Let $X$ and $Y$ be metric spaces, $\bA\subset \cstu(X)$ and $\Phi\colon \bA\to \cstu(Y)$ be a map. We say that $\Phi$ is \emph{coarse-like} if for all $m\in\N$ and all $\eps>0$ there exists $k\in \N$ such that $\Phi(a)$ can be $\eps$-$k$-approximated for every contraction $a\in \bA$ with $\propg(a)\leq m$.
\end{definition}

By \cite[Theorem 4.4]{BragaFarah2018Trans} every isomorphism $\Phi\colon \cstu(X)\to \cstu(Y)$ is coarse-like.
The proof of the following proposition is inspired by \cite[Theorem 4.4]{BragaFarah2018Trans}.

\begin{proposition}\label{prop:stayinside}
Let $X$ and $Y$ be countable metric spaces. Suppose that $\Phi\colon\ell_\infty(X)\to\cB(\ell_2(Y))$ is a strongly continuous $^*$-homomorphism which lifts a $^*$-homomorphism $\Lambda$ between the uniform Roe coronas of $X$ and $Y$ on a nonmeager ideal $\SI\subset \cP(X)$ containing all finite subsets of $X$. Then $\Phi$ is coarse-like and the 
image of~$\Phi$ is contained in $\cstu(Y)$.
\end{proposition}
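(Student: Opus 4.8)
\textbf{Proof plan for Proposition~\ref{prop:stayinside}.}

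The plan is to establish the coarse-like property first, and then deduce that the image lands in $\cstu(Y)$ as an easy consequence: if for every $m$ and every $\eps>0$ there is a uniform $k=k(m,\eps)$ such that $\Phi(a)$ is $\eps$-$k$-approximable for all contractions $a\in\ell_\infty(X)$ with $\propg(a)\le m$, then in particular this applies with $m=0$ to every contraction in $\ell_\infty(X)$, and since $\cstu(Y)$ is closed and the set of $\ell_\infty(X)$-contractions spans a dense subset of $\ell_\infty(X)$, linearity and continuity of $\Phi$ give $\Phi(\ell_\infty(X))\subseteq\cstu(Y)$. Actually the only elements of $\ell_\infty(X)$ one needs to control are the projections $\chi_A$, $A\subseteq X$ (their linear span is dense), so I would phrase the coarse-like estimate for those.

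The heart of the argument is a contradiction/exhaustion scheme in the style of \cite[Theorem~4.4]{BragaFarah2018}, exploiting nonmeagerness of $\SI$ via the Jalali--Naini--Talagrand characterization (Proposition~\ref{T:J-NT}). Suppose coarse-likeness fails: fix $\eps>0$ witnessing this, so for every $k\in\N$ there is a projection $\chi_{A_k}$ (say, $m=0$ suffices after reducing to the diagonal case; in general one fixes the offending $m$) such that $\Phi(\chi_{A_k})$ admits no $\eps$-$k$-approximation. Using strong continuity of $\Phi$ together with the fact that $\cstu(Y)\supseteq$-approximations can be localized to finite rectangles, I would recursively build a sequence of \emph{finite} subsets $I_n\subseteq X$ and corresponding finite subsets $J_n\subseteq Y$, with the $J_n$ pairwise disjoint and the "$Y$-distance scales" growing, so that $\Phi(\chi_{I_n})$, compressed appropriately, is within $\eps/2$ of $\chi_{J_n}\Phi(\chi_{I_n})\chi_{J_n}$ yet this compression is not $\eps$-$k_n$-approximable for a rapidly increasing sequence $k_n$. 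The disjointness of the $J_n$'s is arranged precisely so that for any infinite $L\subseteq\N$ the block-diagonal sum $\sum_{n\in L}\chi_{I_n}$ has image (under $\Phi$, modulo $\cK$) essentially $\bigoplus_{n\in L}\Phi(\chi_{I_n})$ compressed to $\bigsqcup_{n\in L}J_n$.

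Now apply Jalali--Naini--Talagrand: since $\SI$ is nonmeager and $\{I_n\}$ is a sequence of disjoint finite sets, there is an infinite $L\subseteq\N$ with $S:=\bigcup_{n\in L}I_n\in\SI$. By hypothesis $\Phi$ lifts $\Lambda$ on $\{\pi_X(\chi_S):S\in\SI\}$, so $\pi_Y(\Phi(\chi_S))=\Lambda(\pi_X(\chi_S))$ lies in $\roeq(Y)=\cstu(Y)/\cK(\ell_2(Y))$; hence $\Phi(\chi_S)\in\cstu(Y)+\cK(\ell_2(Y))=\cstu(Y)$ (the latter contains the compacts). Therefore there is some finite $k$ and some $b$ with $\propg(b)\le k$ and $\|\Phi(\chi_S)-b\|\le\eps/4$. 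But compressing by $\chi_{J_n}$ (which does not increase propagation) for $n\in L$ with $k_n>k$ yields an $\eps/4$-close propagation-$\le k$ approximation to $\chi_{J_n}\Phi(\chi_S)\chi_{J_n}$, which by the disjointness of the blocks equals $\chi_{J_n}\Phi(\chi_{I_n})\chi_{J_n}$ up to an error $<\eps/4$ (here strong continuity and disjointness of supports are used), contradicting the non-$\eps$-$k_n$-approximability we arranged. This contradiction proves coarse-likeness, and the image statement follows as above.

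\textbf{Main obstacle.} The delicate point is the recursive construction: one must simultaneously (i) choose $I_n$ finite so that $\Phi(\chi_{I_n})$ retains the bad-approximability defect at scale $k_n$ — this requires approximating the strongly continuous $\Phi$ on the infinite projection $\chi_{A_{k_n}}$ by its value on a finite piece and arguing that a genuine propagation obstruction cannot be an artifact of the tail; and (ii) choose $J_n\subseteq Y$ finite and $k_n$ large enough that the compressions $\chi_{J_n}\Phi(\chi_{I_n})\chi_{J_n}$ still fail to be $\eps/2$-$k_n$-approximable while the $J_n$ become "far apart" in $Y$ so block sums behave diagonally. Controlling the interaction between the $Y$-geometry (needed to make $\propg$-approximations on disjoint blocks add up) and the strong-operator approximations coming from $\Phi$ is exactly the technical core borrowed and adapted from \cite[Theorem~4.4]{BragaFarah2018}; everything else is bookkeeping.
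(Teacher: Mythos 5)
Your plan is essentially the paper's proof: assume the coarse-like estimate fails, extract disjoint finitely supported bad elements, use Proposition~\ref{T:J-NT} to place their union $S$ in $\SI$ so that the lifting hypothesis puts $\Phi(\chi_S)$ in $\cstu(Y)$, localize the individual blocks in disjoint finite pieces of $Y$, and compress a fixed finite-propagation approximant of $\Phi(\chi_S)$ to contradict the badness of the blocks. The only real deviation is that you run the argument for projections and recover arbitrary contractions afterwards by discretization (the paper works with arbitrary contractions directly, using a compactness-of-the-unit-ball lemma for the finite part); that reduction is fine, provided you use its quantitative form: every diagonal contraction is within $\delta$ of a linear combination of at most $N(\delta)$ characteristic functions with coefficients of modulus at most $1$, so a single $k$ working for all projections yields a single $k$ for all contractions.

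The one step whose stated justification would not carry the weight is your localization (ii): to find a finite $J_n\subseteq Y$ with $\chi_{J_n}\Phi(\chi_{I_n})\chi_{J_n}$ norm-close to $\Phi(\chi_{I_n})$ you need $\Phi(\chi_{I_n})$ to be compact, and strong continuity of $\Phi$ alone does not give this (the amplification $a\mapsto a\otimes 1$ is strongly continuous and sends $e_{xx}$ to an infinite projection, so no finite compression is norm-close). What does give it is the part of the hypothesis you have not used at that point: finite sets lie in $\SI$, so $\pi_Y(\Phi(\chi_F))=\Lambda(\pi_X(\chi_F))=\Lambda(0)=0$ for every finite $F$, i.e., $\Phi$ maps finitely supported diagonal elements to compact operators --- exactly the fact the paper invokes when it notes that each $\Phi(a_m)$ is compact before applying Lemma~\ref{L:AA}. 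The same observation is also what lets you make the $I_n$ pairwise disjoint (which you need for Proposition~\ref{T:J-NT}): bad projections can be found supported off any prescribed finite $F$ because the finitely many pieces $\chi_{B\cap F}$ have compact, hence finite-propagation-approximable, images, so the defect must live in the complement of $F$. With these two points supplied, your recursion (cross terms $\chi_{J_n}\Phi(\chi_{I_m})$ controlled by Lemma~\ref{L:AA} for $m>n$ and by disjointness of the $J$'s plus the capture of $\Phi(\chi_{I_m})$ by $J_m$ for $m<n$) and the final compression argument go through as in the paper.
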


 Before proving Proposition \ref{prop:stayinside}, we need a lemma. This lemma will also be 
 important in \S\ref{S:proof.blackbox} 
and \S\ref{SectionCoarseEquiv}.

\begin{lemma} \label{L:AA} Suppose $X$ and $Y$ are countable 
metric spaces 
 and $\Phi\colon \ell_\infty(X)\to \cstu(Y)$ is 
 a strongly continuous $^*$-homomorphism. 
Then for every $b\in \cK(\ell_2(Y))$ and 
every $\e>0$ there exists a finite $F\subseteq X$ such that 
for all $a\in \ell_\infty(X)$ with $\|a\|\leq 1$ we have 
\[
\max\Big(\norm{b\Phi(a\rs (X\setminus F))},\norm{\Phi(a\rs (X\setminus F))b}\Big)<\e. 
\]
\end{lemma}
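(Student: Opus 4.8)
\textbf{Proof proposal for Lemma~\ref{L:AA}.}

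The plan is to exploit strong continuity of $\Phi$ together with the fact that $b$ is compact, hence norm-approximable by finite-rank operators supported on a finite subset of $Y$. First I would fix $\e>0$ and pick a finite $G\subseteq Y$ such that $\norm{b-\chi_G b\chi_G}<\e/3$; since multiplication by a fixed operator is norm-continuous on bounded sets, and $\norm{\Phi(a\rs(X\sm F))}\le 1$ for every contraction $a$ and every $F$, it suffices to control $\norm{\chi_G b\chi_G\,\Phi(a\rs(X\sm F))}$ and its adjoint-side analogue, and for that it is enough to make $\norm{\chi_G\,\Phi(a\rs(X\sm F))}$ and $\norm{\Phi(a\rs(X\sm F))\,\chi_G}$ small, uniformly over contractions $a$, since $\norm{b\chi_G}\le\norm b$. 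By symmetry (replace $a$ by $a^*$, noting $\Phi$ is a $^*$-homomorphism and $\chi_G$ is self-adjoint) I only need one of the two.

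The key point is that $\Phi(\chi_X)=\Phi(1)$ is a projection in $\cB(\ell_2(Y))$, and for any finite $F\subseteq X$ we have $\Phi(\chi_{X\sm F})=\Phi(1)-\Phi(\chi_F)$, with $\Phi(\chi_F)$ a finite sum of the mutually orthogonal projections $\Phi(e_{xx})$, $x\in F$. Strong continuity of $\Phi$ gives that $\Phi(\chi_{F_n})\to\Phi(1)$ strongly along any increasing exhaustion $F_n\uparrow X$ by finite sets, hence $\Phi(\chi_{X\sm F_n})\to 0$ strongly. Since $\chi_G$ is finite-rank, strong convergence to $0$ of $\Phi(\chi_{X\sm F_n})$ upgrades to norm convergence of $\chi_G\,\Phi(\chi_{X\sm F_n})$ to $0$ (a finite-rank operator composed with a strongly null net is norm null). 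So I can choose $F$ finite with $\norm{\chi_G\,\Phi(\chi_{X\sm F})}<\e/(3(\norm b+1))$.

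It remains to pass from $\chi_{X\sm F}$ to an arbitrary contraction supported on $X\sm F$. This is where one uses that $\Phi$ is a homomorphism on the abelian algebra $\ell_\infty(X)$: for $a\in\ell_\infty(X)$ with $\norm a\le 1$ and $a=a\rs(X\sm F)$ we have $a=\chi_{X\sm F}\,a$, hence $\Phi(a)=\Phi(\chi_{X\sm F})\Phi(a)$, so
\[
\norm{\chi_G\,\Phi(a\rs(X\sm F))}\le\norm{\chi_G\,\Phi(\chi_{X\sm F})}\cdot\norm{\Phi(a)}\le\norm{\chi_G\,\Phi(\chi_{X\sm F})}<\frac{\e}{3(\norm b+1)},
\]
and similarly on the other side after the $^*$-symmetrization. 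Combining the three estimates ($\norm{b-\chi_Gb\chi_G}<\e/3$, the contraction bound, and the bound just obtained) yields $\norm{b\,\Phi(a\rs(X\sm F))}<\e$ and likewise $\norm{\Phi(a\rs(X\sm F))\,b}<\e$ for all contractions $a$, as required.

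I do not anticipate a serious obstacle here; the only mild subtlety is the (standard) fact that a strongly convergent net becomes norm convergent after composition with a fixed finite-rank (or compact) operator, which is what converts the strong continuity hypothesis on $\Phi$ into the norm statement in the conclusion. A secondary bookkeeping point is to make sure the argument is genuinely uniform over all contractions $a$ supported off $F$ — but the factorization $\Phi(a)=\Phi(\chi_{X\sm F})\Phi(a)$ makes the uniformity automatic, since $\Phi(\chi_{X\sm F})$ depends only on $F$ and $\norm{\Phi(a)}\le 1$.
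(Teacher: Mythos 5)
Your argument is correct, and it is genuinely different from (and more direct than) the paper's. The paper proves the lemma by contradiction with a gliding-hump construction: assuming failure, it extracts contractions $a_n$ supported off an exhaustion $(X_n)_n$ with $\|b\Phi(a_n)\|\geq\eps$, compresses $b$ to a finite corner $\chi_E b\chi_E$, passes to $c_n=a_na_n^*$ truncated to finite sets, assembles a single element $c\in\ell_\infty(X)$ from the disjointly supported pieces, and then uses strong continuity plus the finite-rank compression by $\chi_E$ to get norm convergence of $\sum_n\chi_E\Phi(c_n\rs X_{n+1})\chi_E$, contradicting the uniform lower bound on the terms. You instead argue directly: compactness of $b$ reduces everything to controlling $\|\chi_G\Phi(a\rs(X\sm F))\|$, the factorization $\Phi(a\rs(X\sm F))=\Phi(\chi_{X\sm F})\Phi(a\rs(X\sm F))$ makes the estimate uniform in $a$ for free, and strong continuity gives $\Phi(\chi_{X\sm F_n})\to 0$ strongly along an exhaustion. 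What your route buys is brevity and the absence of any subsequence extraction; what the paper's route buys is a template (compress to a finite corner, glue disjointly supported counterexamples, contradict strong-to-norm convergence) that it reuses in several other arguments in \S\ref{SectionCoarseLike} and \S\ref{S:proof.blackbox}. One small caution: your parenthetical principle ``a finite-rank operator composed with a strongly null net is norm null'' is only valid when the finite-rank factor sits on the side where the net acts on vectors, i.e.\ $T_n\chi_G\to0$ in norm when $T_n\to0$ strongly and boundedly, whereas $\chi_G T_n$ need not be norm null in general (it requires $T_n^*\to 0$ strongly). Here this is harmless because the operators $\Phi(\chi_{X\sm F_n})$ are self-adjoint projections, so $\|\chi_G\Phi(\chi_{X\sm F_n})\|=\|\Phi(\chi_{X\sm F_n})\chi_G\|\to0$; it would be worth making that one-line adjustment explicit.
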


\begin{proof} Suppose the conclusion fails for some $b\in \cK(\ell_2(Y))$ and $\eps>0$. Without loss of generality, say $\|b\|=1$. Let $(X_n)_n$ be a sequence of finite subsets of $X$ such that $X=\bigcup_nX_n$ and $X_n\subset X_{n+1}$ for all $n\in\N$. Pick a sequence of contractions $(a_n)_n$ in $\ell_\infty(X)$ such that $a_n$ belongs to $\cB(\ell_2(X\setminus X_n))$ and \[
\max\Big(\norm{b\Phi(a_n)},\norm{\Phi(a_n)b}\Big)\geq \e, 
\]
for all $n\in\N$. Without loss of generality, by going to a subsequence, we can assume that $\norm{b\Phi(a_n)}\geq\eps$ for all $n\in\N$. 

Since $b$ is compact, pick a finite $E\subseteq X$ such that $\|\chi_Eb\chi_E -b\|<\e/2$. So, 
\[\norm{\chi_E\Phi(a_na^*_n)\chi_E}=\norm{\chi_E\Phi(a_n)}^2\geq \norm{\chi_Eb\chi_E\Phi(a_n)}^2\geq\eps^2/4\] for all $n\in\N$. For each $n\in\N$, let $c_n=a_na^*_n$, so $c_n\in\cB(\ell_2(X\setminus X_n))$. Since $E$ is finite and $\Phi$ is strongly continuous, by going to a further subsequence, assume that $\norm{\chi_E\Phi(c_n\rs X_{n+1})\chi_E}\geq\eps^2/8$ for all $n\in\N$.
Define $c\in \ell_\infty(X)$ by letting 
\[c(i)=\begin{cases}
c_n\rs X_{n+1}(i),& \text{ if } n\in\N\text{ and }i\in X_{n+1}\setminus X_n,\\
0, &\text{ if }i\in X_0.
\end{cases}\] 
So, $\sum_{n}c_n\rs X_{n+1}$ converges in the strong operator topology to $c$. Since $E$ is finite and $\Phi$ is strongly continuous, the sum $\sum_{n}\chi_E\Phi(c_n\rs X_{n+1})\chi_E$ converges in norm to $\chi_E\Phi(c)\chi_E$; contradiction, since $\inf_n\norm{\chi_E\Phi(c_n\rs X_{n+1})\chi_E}>0$.
\end{proof}

\begin{proof}[Proof of Proposition \ref{prop:stayinside}]
Since $(X,d)$ is a countable metric space, in order to simplify notation, assume that $X=\N$ and that $d$ is an arbitrary metric on~$\N$. Suppose that the conclusion fails. Fix $\e>0$ such that 
for every $m$ there exists $a\in \ell_\infty(X)$ with $\|a\|\leq 1$ such that 
$\Phi(a)$ cannot be $\e$-${m}$-approximated. 


\begin{claim} For every finite $F\subseteq X$ and every $m\in \bbN$ there is a contraction $a\in \ell_\infty(X\setminus F)$
such that $\Phi(a)$ cannot be $\e/2$-$m$-approximated. Moreover, $a$ can be chosen to have finite support. 
\end{claim}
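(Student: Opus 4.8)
The plan is to extract a single witness to the failure of the conclusion of Proposition~\ref{prop:stayinside} and then perform two successive reductions on it. Fix a finite $F\subseteq X$ and $m\in\bbN$. The key preliminary observation is that $\Phi(e_{xx})$ is a finite-rank projection for every $x\in X$: since $\{x\}\in\SI$ and $e_{xx}=\chi_{\{x\}}$ is compact, the fact that $\Phi$ lifts $\Lambda$ on $\{\pi_X(\chi_S):S\in\SI\}$ gives $\pi_Y(\Phi(e_{xx}))=\Lambda(\pi_X(e_{xx}))=\Lambda(0)=0$, so $\Phi(e_{xx})$ is a compact projection, hence finite-rank. In particular $r_0:=\max_{x\in F}\propg(\Phi(e_{xx}))$ is finite and depends only on $F$ and $\Phi$, and since $\Phi(a\rs F)=\sum_{x\in F}a_x\Phi(e_{xx})$ we have $\propg(\Phi(a\rs F))\le r_0$ for \emph{every} contraction $a\in\ell_\infty(X)$. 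Choose $m'\in\bbN$ with $m'\ge\max(m,r_0)$. By the choice of $\e$ made just before the Claim there is a contraction $a\in\ell_\infty(X)$ such that $\Phi(a)$ cannot be $\e$-$m'$-approximated.

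First reduction: I would show that $\Phi(a\rs(X\sm F))$ cannot be $\e$-$m'$-approximated. Writing $a=a\rs F+a\rs(X\sm F)$, if some $b$ with $\propg(b)\le m'$ satisfied $\norm{\Phi(a\rs(X\sm F))-b}\le\e$, then $\Phi(a\rs F)+b$ would have propagation at most $\max(r_0,m')=m'$ and would $\e$-$m'$-approximate $\Phi(a)=\Phi(a\rs F)+\Phi(a\rs(X\sm F))$, a contradiction.

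Second reduction: cut $a\rs(X\sm F)$ down to a finitely supported contraction, at the cost of passing from $\e$ to $\e/2$. Suppose toward a contradiction that $\Phi(a\rs E)$ can be $\e/2$-$m$-approximated for every finite $E\subseteq X\sm F$. Fix finite sets $E_n\uparrow X\sm F$ and $\e/2$-$m$-approximations $b_n$ of $\Phi(a\rs E_n)$; since $\Phi$ is contractive, $\norm{b_n}\le 1+\e/2$. As $Y$ is countable, the ball of radius $1+\e/2$ in $\cB(\ell_2(Y))$ is compact and metrizable in the weak operator topology, so after passing to a subsequence $b_n\to b$ weakly, and $\propg(b)\le m$ because every matrix entry of $b$ is a limit of the corresponding entries of the $b_n$. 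Meanwhile $a\rs E_n\to a\rs(X\sm F)$ strongly, so $\Phi(a\rs E_n)\to\Phi(a\rs(X\sm F))$ strongly, hence weakly; therefore $\Phi(a\rs E_n)-b_n\to\Phi(a\rs(X\sm F))-b$ weakly, and weak lower semicontinuity of the operator norm yields $\norm{\Phi(a\rs(X\sm F))-b}\le\e/2$. Then $b$ is an $\e$-$m'$-approximation of $\Phi(a\rs(X\sm F))$, contradicting the first reduction. Hence some finite $E\subseteq X\sm F$ has $\Phi(a\rs E)$ not $\e/2$-$m$-approximable, and $a\rs E$ — a finitely supported contraction lying in $\ell_\infty(X\sm F)$ — is as required.

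The step I expect to be the main obstacle is the second reduction: ``cannot be $\e$-$m$-approximated'' is a norm-open condition, whereas the obvious finite-support truncations of $a\rs(X\sm F)$ converge to it only strongly, so one cannot simply pass to a limit of the witnesses $a\rs E_n$. The device that bridges the gap is to instead extract a weak-operator-topology limit of the \emph{approximants} $b_n$ — legitimate because they are uniformly bounded and $\ell_2(Y)$ is separable — and to invoke lower semicontinuity of the operator norm under weak convergence; this is precisely where the (harmless) loss from $\e$ to $\e/2$ enters. The remaining points — that $\Phi(e_{xx})$ is compact, that propagation of a finite sum is the maximum of the propagations, and that propagations add correctly when combining approximations — are routine.
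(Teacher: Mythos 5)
Your second reduction is sound: extracting a WOT-limit of the uniformly bounded approximants $b_n$, noting that matrix entries pass to the limit (so propagation $\le m$ is preserved) and using weak lower semicontinuity of the norm is exactly the content of the lemma the paper cites for this step ([BragaFarah2018, Lemma~4.7]), and your bookkeeping there is correct. The genuine gap is in the first reduction, at the assertion that $r_0=\max_{x\in F}\propg(\Phi(e_{xx}))$ is finite. What you actually prove is that each $\Phi(e_{xx})$ is a compact (finite-rank) projection; but finite rank does not give finite propagation. A rank-one projection onto a vector whose support is spread over an unbounded subset of $Y$ has infinite propagation, and nothing about $\Phi$ (a strongly continuous $^*$-homomorphism into $\cB(\ell_2(Y))$) forces its values to have finite propagation --- indeed the whole point of Proposition~\ref{prop:stayinside} is only that the image is approximable by finite-propagation operators. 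Since your error budget relies on adding $\Phi(a\rs F)$ at \emph{zero} cost in both norm and propagation (so that an $\e$-$m'$-approximant of $\Phi(a\rs(X\sm F))$ upgrades to an $\e$-$m'$-approximant of $\Phi(a)$), the argument as written breaks at this point.

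The gap is repairable, and the repair is essentially what the paper does: work with approximability rather than exact propagation. Since each $\Phi(e_{xx})$ is compact, it lies in $\cstu(Y)$, so for $x\in F$ choose $c_x$ of finite propagation with $\|\Phi(e_{xx})-c_x\|\le\e/(2|F|)$; then for every contraction $a$ the operator $\sum_{x\in F}a_xc_x$ is an $\e/2$-$r_0'$-approximation of $\Phi(a\rs F)$ with $r_0'$ depending only on $F$ (the paper gets this uniformity from compactness of the unit ball of $\ell_\infty(F)$ via [BragaFarah2018, Lemma~4.8], and combines approximations via Lemma~4.5). With $m'=\max(m,r_0')$ and a witness $a$ for ``not $\e$-$m'$-approximable'', the splitting now costs $\e/2$, so you only conclude that $\Phi(a\rs(X\sm F))$ is not $\e/2$-$m'$-approximable --- but that is still enough: it implies not $\e/2$-$m$-approximable, and your second reduction (which produces an $\e/2$-$m$-approximant of $\Phi(a\rs(X\sm F))$ from approximants of the finite truncations) then yields the finitely supported witness exactly as you intended.
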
 

\begin{proof} Suppose otherwise, and fix offenders, say $m\in\N$ and a finite $F\subset X$. 
Since $F$ is finite, the unit ball of $\ell_\infty(F)$ is compact. By the metric space instance of 
\cite[Lemma~4.8]{BragaFarah2018Trans} there exists $m$ such that $\Phi(a)$ can be $\e/2$-$m$-approximated for every contraction $a\in \ell_\infty(F)$. But every contraction $b\in \ell_\infty(X)$ can be written as $b_F+b_{X\setminus F}$
for contractions $b_F\in \ell_\infty(F)$ and $b_{X\setminus F} \in \ell_\infty(X\setminus F)$. Therefore, each of $\Phi(b_F)$ and $\Phi(b_{X\setminus F})$ 
can be $\e/2$-$m$-approximated, and \cite[Lemma~4.5]{BragaFarah2018Trans} 
implies that $\Phi(b)$ can be $\e$-$m$-approximated. Since $b\in \ell_\infty(X)$ was arbitrary, 
we have a contradiction. 

We now prove the
 second statement in the claim. Let $a\in \ell_\infty(X\setminus F)$ such that $\Phi(a)$ cannot be $\e/2$-$m$-approximated and let $(X_n)_n$ be a sequence of finite subsets of $X$ such that $X=\bigcup_nX_n$ and $X_n\subset X_{n+1}$ for all $n\in\N$. So, $\lim_n \Phi(a \chi_{X_n})=\Phi(a)$
in the strong operator topology. By 
 \cite[Lemma~4.7]{BragaFarah2018Trans}, there is $n\in\N$ 
such that $\Phi(a\chi_{X_n})$ cannot be $\e/2$-$m$-approximated.
\end{proof} 

By the claim above, there exist a sequence $(F_m)_m$ of disjoint finite subsets of $X$ and a sequence of contractions $(a_m)_m$ such that $a_m\in \ell_\infty(F_m)$ and $\Phi(a_m)$ cannot be $\eps$-$m$-approximated for all $m\in\N$. Since $\Phi$ lifts $\Lambda$ on a nonmeager ideal $\SI\subseteq \cP(\N)$, by 
Theorem~\ref{T:J-NT} there exists an infinite $S\subseteq \bbN$ such that 
for all $L\subseteq S$ we have 
$\bigcup_{n\in L} F_n\in \SI$, 
and therefore $\pi_Y(\Phi(\sum_{n \in L} a_n))\in \roeq(Y)$. 
Since all compact operators on $\ell_2(Y)$ belong to $\cstu(Y)$, 
this implies $\Phi(\sum_{n \in L} a_n)\in \cstu(Y)$ for all $L\subseteq S$. Since $\Phi$ is strongly continuous, $\sum_{n\in L} \Phi(a_n)$ strongly converges to $\Phi(\sum_{n \in L} a_n)\in\cstu(Y)$ for all $L\subseteq S$. 

Let $(Y_n)_n$ be a sequence of finite subsets of $Y$ such that $Y=\bigcup_nY_n$ and $Y_n\subset Y_{n+1}$ for all $n\in\N$. By Lemma \ref{L:AA}, we can pick an infinite $L\subseteq S$ such that $\|\chi_{Y_n}\Phi(a_m)\|\leq 2^{-n-m}$ for all $n<m$ in $L$. Since each $a_m$ is compact, each $\Phi(a_m)$ is compact as well. So, going to a further infinite subset of $S$ if necessary, assume that $\|\chi_{Y_m}\Phi(a_n)- \Phi(a_n)\|\leq 2^{-n-m}$ for all $n\leq m$ in $L$. 

Let $a=\sum_{m\in L} a_m$. Then $\pi_Y(\Phi(a))=\Lambda(\pi_Y(a))$ and $\Phi(a)\in \cstu(Y)$. Fix an operator $c\in \cstu[Y]$ of finite propagation such that $\|\Phi(a)-c\|<\e/2$. Then
\[
\|\Phi(a_{m})-\chi_{Y_{m}\setminus Y_{m-1}}c\| \leq\|\Phi(a_{m})-\chi_{Y_{m}\setminus Y_{m-1}}\Phi(a)\| +\eps/2.
\]
Since $\|\chi_{Y_m}\Phi(a_m)- \Phi(a_m)\|\to 0$ and $\|\chi_{Y_{m-1}}\Phi(a_m)\|\to 0$ as $m\to\infty$, we have that $\|\Phi(a_m)-\chi_{Y_{m}\setminus Y_{m-1}}\Phi(a)\|\to 0$ as $m\to\infty$, hence $\limsup_m\|\Phi(a_{m})-\chi_{Y_{m}\setminus Y_{m-1}}c\|\leq\eps/2$. Since $\propg(\chi_{Y_{m}\setminus Y_{m-1}}c)\leq \propg(c)$, this contradicts the fact that $a_m$ cannot be $\eps$-$m$ approximated for all $m\in\N$.
\end{proof}

\section{Geometric conditions on metric spaces}\label{SectionGeomCond}

Theorem \ref{ThmRigidityRoeCorona} applies to every pair of metric spaces such that all of their sparse subspaces yield only compact ghost projections. In this section, we prove some technical results which depend on this geometric condition.

%

The conclusion of the following result should be compared with the notion of an isomorphism $\cstu(X)\to \cstu(Y)$ being \emph{rigid} (see \cite[\S1]{BragaFarah2018Trans}, or \cite[Lemma~4.6]{SpakulaWillett2013AdvMath}). 

\begin{proposition}\label{PropThankGodForNoNoncompactGhostProj}
Let $X$ be a metric space such that all of its sparse subspaces yield only compact ghost projections. Let $(p_n)_n$ be an orthogonal sequence of non-zero finite rank projections in $\cstu(X)$ such that $\sum_{n\in M}p_n$ converges in the strong operator topology to an element in $\cstu(X)$ for all $M\subseteq \N$. Then 
\[
\delta=\inf_n \sup_{x,y\in X} |\langle p_n\delta_x,\delta_y\rangle|>0.
\]
\end{proposition}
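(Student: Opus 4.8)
The statement is a contrapositive-friendly dichotomy: if $\delta = 0$, then we extract from $(p_n)_n$ a subsequence which, after restricting to a sparse subspace, assembles into a noncompact ghost projection, contradicting the geometric hypothesis. So the plan is to assume $\delta = 0$, pass to a subsequence $(p_{n_k})_k$ with $\sup_{x,y}|\langle p_{n_k}\delta_x,\delta_y\rangle| \to 0$, and build the offending ghost.

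\textbf{Step 1: localize the projections.} Each $p_{n_k}$ has finite rank, hence is a Hilbert--Schmidt operator, so its matrix entries $\langle p_{n_k}\delta_x,\delta_y\rangle$ are square-summable; in particular for each $k$ there is a finite set $E_k \subseteq X$ with $\|p_{n_k} - \chi_{E_k}p_{n_k}\chi_{E_k}\|$ as small as we like. Passing to a further subsequence and using that $\sum_{n\in M}p_n$ converges strongly to an element of $\cstu(X)$ (in particular the $p_n$ are uniformly bounded and the tails go to zero strongly on each fixed finite set), I would arrange the finite sets $E_k$ to be pairwise disjoint and, using local finiteness... — actually here I must be careful, since $X$ is only assumed to be a metric space, not locally finite. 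The key extra ingredient I expect to need is that $\sum_n p_n$ converges strongly to an element $p \in \cstu(X)$: this element has finite propagation approximations, and I can use an $\eps$-$k$-approximation of $p$ together with strong convergence to replace each $p_{n_k}$ by a close operator $q_k$ supported near $E_k$ with $\propg(q_k)$ bounded (say by some fixed $r$, obtained from an $\eps$-$r$-approximation of $\sum_{n\in M}p_n$ for a suitable $M$). The crucial point is that a single $r$ works for all $k$ in the subsequence, because all the $p_n$ for $n$ in a fixed subset sum to an operator in $\cstu(X)$.

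\textbf{Step 2: separate the supports to form a sparse subspace.} Having localized $q_k$ to a finite set $E_k'$ of bounded propagation, I would thin out the subsequence so that $d(E_k', E_j') \to \infty$ as $k+j \to \infty$ — this is possible because the $q_k$ are genuinely spread out: if infinitely many $E_k'$ stayed within a bounded region, the strong convergence of $\sum q_k$ plus orthogonality would force the pieces to accumulate, contradicting either orthogonality of the $p_n$ or their nonvanishing norm $\|p_n\| = 1$. Concretely, fix a point and note $|E_k' \cap B_R(x_0)|$ must eventually shrink relative to $\|p_{n_k}\|$; iterating a diagonal argument extracts a subsequence with $d(E_k', E_j')\to\infty$. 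Then $\tilde X = \bigcup_k E_k'$ is a sparse subspace of $X$.

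\textbf{Step 3: assemble the ghost projection and conclude.} On $\tilde X$, the operator $q = \sum_k q_k$ (strong limit, well-defined since the $E_k'$ are eventually far apart and each $q_k$ has bounded propagation, so $q \in \cstu(\tilde X)$) is approximately a projection; a standard perturbation/functional-calculus argument upgrades $q$ to an honest nonzero projection $p' \in \cstu(\tilde X)$ (nonzero because each $\|q_k\|$ is close to $1$). Finally $p'$ is a ghost: given $\eps > 0$, choose $k_0$ with $\sup_{x,y}|\langle p_{n_k}\delta_x,\delta_y\rangle| < \eps/2$ for all $k \geq k_0$ and the perturbation small; then for $x,y$ outside the bounded set $F = \bigcup_{k<k_0} E_k'$ we get $|\langle p'\delta_x,\delta_y\rangle| < \eps$, since the entries of $p'$ at such points come only from the $q_k$ with $k \geq k_0$ plus a small error. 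This contradicts the hypothesis that $\tilde X$ yields only compact ghost projections ($p'$ is noncompact since it has infinitely many orthogonal-ish finite-rank pieces of norm near $1$).

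\textbf{Main obstacle.} The delicate point is Step 1–2: controlling the supports and propagations \emph{uniformly} along the subsequence without a local finiteness hypothesis on $X$. The hypothesis that all subsums $\sum_{n\in M}p_n$ lie in $\cstu(X)$ (not merely $\cB(\ell_2(X))$) is doing real work here — it is what lets one fix a single propagation bound $r$ and then genuinely disperse the finite supports — and getting this bookkeeping right (disjointness, bounded propagation, distances tending to infinity, norms staying near $1$, all for one subsequence simultaneously) via a single diagonalization is the technical heart of the argument.
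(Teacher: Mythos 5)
Your overall strategy matches the paper's: assume $\delta=0$, localize a subsequence of the $p_n$ to finite sets pushed far apart, perturb to honest projections supported on those sets, and sum to produce a noncompact ghost projection over the resulting sparse subspace. The genuine gap is in how you get the assembled operator into $\cstu(\tilde X)$. In Step 1 you extract a single propagation bound $r$ for the blocks $q_k$ from an $\eps$-$r$-approximation of $\sum_{n\in M}p_n$; as written this is circular, since compressing an approximant of the sum to the sets $E_k$ only approximates the individual $p_{n_k}$ after you already know those projections are (almost) supported on disjoint, far-apart sets --- which is exactly what Steps 1--2 are supposed to produce. In Step 3 the inference ``each $q_k$ has bounded propagation, so $q=\sum_k q_k\in\cstu(\tilde X)$'' is precisely the kind of statement that fails for block-diagonal operators over a sparse space unless the bound is uniform: $\prod_k\cB(\ell_2(E_k'))$ is not contained in $\cstu(\tilde X)$ in general. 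The missing (and much simpler) idea, which is how the hypothesis that all subsums lie in $\cstu(X)$ actually enters in the paper, is to make the blockwise errors summable, say $\|q_k-p_{n_k}\|<2^{-k}$: then $\sum_k(q_k-p_{n_k})$ is compact, so $\sum_k q_k$ is a compact perturbation of $\sum_k p_{n_k}\in\cstu(X)$, hence lies in $\cstu(X)$, and being supported in $\tilde X\times\tilde X$ it lies in $\cstu(\tilde X)$. No uniform propagation bound is needed.

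Two smaller points. The separation in Step 2 should be run as an induction rather than via the ``bounded region'' argument you sketch (which conflates bounded with finite, and cardinality with mass): having fixed the finitely many previously chosen finite sets, use that $p_m\to 0$ strongly, so $\|\chi_W p_m\|\to 0$ for the relevant finite $W$, to pick a later $p_m$ essentially supported away from them, and only then localize it to a finite set $X_k$. If at that stage you apply the continuous functional calculus blockwise, each $q_k$ is an exact projection in $\cB(\ell_2(X_k))$, so $\sum_k q_k$ is already a projection: no global ``upgrade to a projection'' step is needed, and the ghost estimate stays blockwise, $\sup_{x,y}|\langle q_k\delta_x,\delta_y\rangle|<2^{-k+1}$, which is what makes the $\eps$-outside-a-finite-set verification (and noncompactness) go through cleanly.
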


\begin{proof}
Suppose that the conclusion fails. By going to a subsequence of $(p_n)_n$, we may assume that $\sup_{x,y\in X} |\langle p_n \delta_x, \delta_y\rangle|<2^{-n}$ for all $n\in\N$. 
 
\begin{claim}
By going to a subsequence of $(p_n)_n$, there exists a sequence $(X_n)_n$ of disjoint finite subsets of $X$ and a sequence of projections $(q_n)_n$ such that 
\begin{enumerate}
\item $d(X_k,X_m)\to \infty$ as $k+m\to \infty$, 
\item $\|p_n-q_n\|<2^{-n}$, and 
\item $q_n\in \cB(\ell_2(X_n))$, for all $n\in\N$.
\end{enumerate}
\end{claim} 
 
 \begin{proof}
We construct sequences $(q_k)_k$, $(X_k)_k$ and $(n_k)_k$ by induction as follows. Since $p_0$ has finite rank, pick a projection $q_0$ with finite support 
 such that $\|p_0-q_0\|< 2^{-1}$ and set $n_0=0$. Pick a finite $X_0\subseteq X$ such that $\supp(q_0)\subseteq X_0\times X_0$. Fix $k>0$ and assume that $X_j$, $n_j$ and $q_j$ have been defined, for all $j\leq k-1$.

Let $Z=\{x\mid d(x,\bigcup_{j\leq k-1} X_j)\leq k\}$. Since $\bigcup_{j\leq k-1}X_j$ is finite, so is $Z$. Moreover $d(X\setminus Z,\bigcup_{j\leq k-1})>k$. Since $\sum_n p_n$ strongly converges to an operator in $\mathcal B(\ell_2(X))$, for all large enough $m$ we have $\|\chi_Z p_m\| <2^{-k-2}$. 
Fix such $m$. For a sufficiently large finite $X_k\subseteq X\setminus Z$ the operator 
\[
a= \chi_{X_k} p_m \chi_{X_k}
\]
 is a positive contraction and it satisfies $\|a-p_m\|<2^{-k-1}$. 
Therefore the spectrum $\Sp(a)$ of $a$ is included in $[0,2^{-k-1}]\cup [1-2^{-k-1}, 1]$ 
and the function $f\colon \Sp(a)\to \{0,1\}$ defined by $f(t)=0$ if $t<1/2$ and $f(t)=1$ if $t>1/2$ is continuous. 
By the continuous functional calculus, $q_k=f(a)$ is a projection and $\|q_k-a\|<2^{-k-1}$. 
Therefore $\|q_k-p_m\|<2^{-k}$. In addition, we have $q_k\in \cst(a)\subseteq \chi_{X_k}\cstu(X)\chi_{X_k}$, 
 and therefore $\supp(q_k)\subseteq X_k^2$ as required. 

Let $n_k=m$. This describes the recursive construction of the sequences $(q_k)_k$, $(X_k)_k$ and $(n_k)_k$, and completes the proof. 
\end{proof}

Pass to a subsequence of $(p_n)_n$ and let $(X_n)_n$ and $(q_n)_n$ be given by the claim above. Since $\|p_n-q_n\|<2^{-n}$, $\sum_n(p_n-q_n)$ is compact, which implies $\sum_n(p_n-q_n)\in\cstu(X)$. Since $\sum_np_n$ converges in the strong operator topology to an element in $\cstu(X)$, so does $\sum_nq_n$. Therefore, since $\sum_nq_n\in \bigoplus_n\cB(\ell_2(X_n))$, it follows that $\sum_nq_n\in \cstu(\tilde X)$, where $\tilde X=\bigsqcup_nX_n$. 

Let us notice that $\sum_nq_n$ is a noncompact ghost projection. Indeed, $\sum_nq_n$ is clearly a noncompact projection. Also, since $\|p_n-q_n\|<2^{-n}$, it follows that $\sup_{x,y\in X}|\langle q_n\delta_x,\delta_y\rangle|<2^{-n+1}$ for all $n\in\N$. So, $\sum_{n} q_n$ is a ghost projection (see \cite[Claim 2 in proof of Theorem 6.1]{BragaFarah2018Trans} for details); contradiction. 
\end{proof}

The next result will be essential in the proof of Proposition \ref{X:PropCofiniteSubRank1.General}.

\begin{lemma}\label{LemmaToolFiniteRank.General}
Let $X$ be a metric space and let $(p_n)_n$ be an orthogonal sequence of finite rank projections in $\cB(\ell_2(X))$ such that
\begin{enumerate}
\item Each $p_n$ has rank strictly greater than $1$, 
\item The sum $\sum_{n\in S} p_n$ converges in the strong operator topology to an element of $\cstu(X)$, for every $S\subseteq \bbN$, and 
\item\label{Item1111111} $\inf_{n\in\N}\sup_{x\in X}\|p_n\delta_x\|>0$.
\end{enumerate}
Then there exist an infinite $M\subseteq \N$ and sequences of non-zero 
projections $(q_n)_n$ and $(w_n)_n$ such that $q_n+w_n=p_n$ for all $n\in \N$, and both 
 $\sum_{n\in S}q_n$ and $\sum_{n\in S}w_n$ converge in the strong operator topology to 
 an element of $\cstu(X)$, for every $S\subseteq M$. 
 \end{lemma}

We point out that, by Proposition \ref{PropThankGodForNoNoncompactGhostProj}, \eqref{Item1111111} above is satisfied automatically if all sparse subspaces of $X$ yield only compact ghost projections. 

\begin{proof}[Proof of Lemma \ref{LemmaToolFiniteRank.General}]
By the hypothesis, there are $\delta>0$ and a sequence $(x_n)_n$ in $X$ such that $\|p_n\delta_{x_n}\|>\delta$ for all $n\in\N$. Using the \cstar-equality, we have that
\[
\|p_ne_{x_nx_n}p_n\|=\|p_ne_{x_nx_n}\|^2=\|p_n\delta_{x_n}\|^2 \geq \delta^2,
\]
for all $n\in\N$. For $n\in\N$, set 
\[
q_n=p_ne_{x_nx_n}p_n/\|p_ne_{x_nx_n}p_n\|. 
\]
Then $q_n$ is a rank 1 projection and $q_n\leq p_n$. Since $q_n$ has rank 1, $q_n<p_n$. 
Set $w_n=p_n-q_n$. We are left to show that there is an infinite $M$ such that both $\sum_{n\in S}q_n$ 
and $\sum_{n\in S} w_n$ converges in the strong operator topology to elements of $\cstu(X)$, for all $S\subseteq M$.

 \begin{claim} \label{Claim54} There exists an infinite $M\subseteq \bbN$ such that 
 $\sum_{n\in L} q_n$ strongly converges to an operator in $\cstu(X)$ for every $L\subseteq M$. 
 \end{claim} 
 
 \begin{proof} First, by going to a subsequence, assume that $\alpha=\lim_n\|p_ne_{x_nx_n}p_n\|$ exists. Notice that $\alpha\geq \delta$. Also, since all $p_n$ have finite rank, we can find an infinite $M\subseteq \bbN$ such that 
 \begin{equation}\label{Eq.aca}
 \|p_n e_{x_m x_m}\|<2^{-m-n}
 \end{equation}
 for all distinct $m,n\in M$. 
 
Fix $L\subset M$. Define $a=\alpha^{-1}\sum_{n\in L} p_n$ and 
 $c=\sum_{m\in L} e_{x_m x_m}$, so $a,c\in \cstu(X)$. Notice that the operator $aca -\sum_{n\in L} q_n$ is compact. Indeed, by \eqref{Eq.aca}, 
 this follows from the fact that 
 \[\sum_{n\in L}(\alpha^{-1} p_ne_{x_nx_n}p_n-q_n)\]
is compact. At last, since $a$ and $c$ belong to $\cstu(X)$, so does $\sum_{n\in L} q_n$. 
 \end{proof} 
 
Since $w_n=p_n-q_n$ for all $n\in\N$, Claim \ref{Claim54} implies that $\sum_{n\in L} w_n\in \cstu(X)$ for every 
 $L\subseteq M$ and the conclusion 
 follows. 
\end{proof}

\section{From almost lifts to lifts}\label{S:proof.blackbox} 
 
In this section, we prove Theorem~\ref{T:blackbox}. The proof of the following resembles the techniques used in \cite[Proposition 8.5 and Lemma 8.7]{MKV.FA}, and shows that in case the underlying map $\Lambda$ between quotients is an isomorphisms, almost lifts are indeed lifts (see Definition~\ref{def:lift} and ~\ref{almost.Def.LiftOnDiagIso}).

\begin{proposition}\label{PropCofiniteSubRank1}
Suppose $X$ and $Y$ are countable metric spaces and $\Lambda$ is an isomorphism between 
their uniform Roe coronas which is almost liftable on the diagonal. 
Then $\Lambda$ is liftable on the diagonal. 
\end{proposition}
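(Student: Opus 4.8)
The plan is to start from the almost-lift $\Phi\colon \ell_\infty(X)\to \cstu(Y)$ supplied by Theorem~\ref{ThmAlessandroPaulHomomorphism} (which, by Proposition~\ref{prop:stayinside}, is coarse-like with range inside $\cstu(Y)$) and show that the nonmeager dense ideal $\SI$ on which it lifts $\Lambda$ can in fact be taken to be all of $\cP(X)$. Since $\Phi$ is a strongly continuous $^*$-homomorphism and every element of $\ell_\infty(X)$ is a strong limit of finite linear combinations of characteristic functions, it suffices to prove that $\pi_Y(\Phi(\chi_S)) = \Lambda(\pi_X(\chi_S))$ for \emph{every} $S\subseteq X$, not merely for $S\in\SI$. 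Equivalently, writing $p_S := \Phi(\chi_S)$, one must show that the projection $\Lambda(\pi_X(\chi_S))$ is always lifted by $p_S$.

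The key step is an argument by contradiction driven by the density/nonmeagerness of $\SI$. Suppose $\Phi$ fails to lift $\Lambda$ at some $\chi_S$. First reduce to the case of an infinite $S$ with $\pi_Y(p_S)\neq \Lambda(\pi_X(\chi_S))$; using a decomposition of $S$ into finite pieces together with strong continuity of $\Phi$ and $\Lambda$ on $\ell_\infty/c_0$, one arranges a sequence $(I_n)$ of disjoint finite subsets of $X$ witnessing the failure, so that no matter which infinite $L\subseteq\bbN$ one chooses, $\Phi$ does not lift $\Lambda$ at $\chi_{\bigcup_{n\in L}I_n}$ modulo the pieces controlled so far. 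But by the Jalali--Naini--Talagrand characterization (Proposition~\ref{T:J-NT}), nonmeagerness of $\SI$ yields an infinite $L$ with $\bigcup_{n\in L}I_n\in\SI$, on which $\Phi$ \emph{does} lift $\Lambda$ --- a contradiction, once the bookkeeping is arranged so that the finitely many remaining pieces can be absorbed into a compact perturbation. Here one invokes Lemma~\ref{L:AA} to control the interaction of the tail with any fixed finite window in $Y$, and Lemma~\ref{LemmaToolFiniteRank} (via Proposition~\ref{PropThankGodForNoNoncompactGhostProj}) to split any projection of rank $>1$ appearing along the way into rank-one-plus-complement pieces each of which still sums strongly into $\cstu(Y)$; this is what lets the obstruction be concentrated in a controlled sequence of rank-one projections and then killed.

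Concretely I would proceed as follows. Fix the almost-lift $\Phi$ on $\SI$. For a given $S\subseteq X$, set $e=\Lambda(\pi_X(\chi_S))$ and lift it to a projection $\tilde e\in\cstu(Y)$; the difference $\Phi(\chi_S)-\tilde e$ lies in $\cK(\ell_2(Y))+(\text{something})$, and the goal is to show it is compact. Use strong continuity to write $\chi_S$ as a strong sum of $\chi_{I_n}$ over a partition of $S$ into finite sets, reducing to showing $\sum_n(\Phi(\chi_{I_n})-\text{lift})$ is compact along a suitable subsequence. If not, the defect projections have rank that can be shaved to rank one by Lemma~\ref{LemmaToolFiniteRank}; the resulting rank-one ghost-type obstruction, summed along an infinite $L$ with $\bigcup_{n\in L}I_n\in\SI$, must vanish because $\Phi$ lifts $\Lambda$ there, contradicting the choice of the $I_n$. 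Passing from ``lifts on the diagonal'' for $\Lambda$ alone to ``lifts on diagonals'' (i.e.\ also for $\Lambda^{-1}$) is automatic since the hypothesis is symmetric: $\Lambda^{-1}$ is also almost liftable on the diagonal, so the same argument applies to it.

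The main obstacle I expect is the bookkeeping in the contradiction step: arranging the disjoint finite sets $(I_n)$ and the associated approximating finite windows in $Y$ simultaneously, so that (a) the failure of the lifting is genuinely witnessed on \emph{every} infinite sub-union, and (b) the finitely many ``already used'' pieces can be discarded as a compact perturbation when one finally feeds an $L$ with $\bigcup_{n\in L}I_n\in\SI$ into the hypothesis. This is exactly the kind of diagonalization carried out in \cite[Proposition 7.6 and Lemma 7.8]{MKV.FA}, and the geometric input (compact ghost projections on sparse subspaces) enters only through Proposition~\ref{PropThankGodForNoNoncompactGhostProj} and Lemma~\ref{LemmaToolFiniteRank} to guarantee the obstruction can be localized to rank-one pieces; everything else is a soft argument using strong continuity, the $^*$-homomorphism property of $\Phi$, and the Jalali--Naini--Talagrand theorem.
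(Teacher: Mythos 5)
There is a genuine gap, on two counts. First, your plan leans on Proposition~\ref{PropThankGodForNoNoncompactGhostProj} and Lemma~\ref{LemmaToolFiniteRank} to ``shave the defect to rank one'', but those results require the geometric hypothesis that sparse subspaces yield only compact ghost projections, which is \emph{not} among the hypotheses of Proposition~\ref{PropCofiniteSubRank1}: the proposition is stated (and proved in the paper) for arbitrary countable metric spaces, and the rank-one considerations only enter later (Proposition~\ref{X:PropCofiniteSubRank1}), where that hypothesis is explicitly assumed. Second, and more seriously, the central reduction you propose does not go through: if $\Phi(\chi_S)-F(\chi_S)$ is noncompact (with $F$ some lift of $\Lambda$), partitioning $S$ into finite pieces $I_n$ gives you nothing hereditary to diagonalize against, since $\pi_X(\chi_{I_n})=0$ so the lifting condition at each finite piece is vacuous, and the global defect $\Phi(\chi_S)-F(\chi_S)$ is not a strongly convergent sum of ``local defects'' attached to the $I_n$. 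So you cannot arrange that failure is ``witnessed on every infinite sub-union'' and then contradict Proposition~\ref{T:J-NT}; the bookkeeping you flag as the main obstacle is in fact the missing idea.

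The paper's proof is structured quite differently and the two missing ingredients are worth naming. (i) One first proves that any $q\in\roeq(Y)$ commuting with $\pi_Y(\Phi(\chi_S))$ for all $S\in\SI$ commutes with all $\pi_Y(\Phi(\chi_S))$ --- this is where the diagonalization with disjoint finite sets $F(n)$, finite windows $Y(n)$, Lemma~\ref{L:AA} and Proposition~\ref{T:J-NT} actually lives (so your diagonalization instinct is right, but it is applied to commutation, not to the lifting identity) --- and then deduces $\pi_Y(\Phi(1))=1$ using that $\ell_\infty(X)/c_0(X)$ is a masa (Johnson--Parrott) together with density of $\SI$. (ii) For a fixed $S$, one considers the set $\SI_S=\{T\subseteq X\colon \Phi(\chi_T)(\Phi(\chi_S)-F(\chi_S))\in\cK(\ell_2(Y))\}$, which \emph{is} an ideal in $T$, is Borel by strong continuity of $T\mapsto\Phi(\chi_T)$, and contains $\SI$; since a proper Borel ideal containing the finite sets is meager while $\SI$ is nonmeager, necessarily $X\in\SI_S$, i.e.\ $\Phi(1)(\Phi(\chi_S)-F(\chi_S))$ is compact, and unitality from (i) finishes the argument. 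It is precisely this Borel-ideal/meagerness trick, together with the unitality step, that replaces the direct decomposition you attempted, and neither appears in your proposal.
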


\begin{proof} 
Let $\Phi\colon \ell_\infty(X)\to \cB(\ell_2(Y))$ be a strongly continuous $^*$-homomorphism which almost lifts $\Lambda$ on the diagonal. Fix a nonmeager ideal $\SI\subseteq \mathcal{P}(X)$ such that $\Phi$ lifts $\Lambda$ on $\{\pi_X(\chi_S)\colon S\in \SI\}$. 

We shall prove that the following conditions hold.
\begin{enumerate}
\item\label{nonm:c2} If $q\in \roeq(Y)$ commutes with $\pi_Y(\Phi(\chi_S))$ for all $S\in\SI$ then $q$ commutes with $\pi_Y(\Phi(\chi_S))$ for all $S\subseteq X$,
\item\label{nonm:c3} $\pi_Y(\Phi(1))=1$. 
\item\label{nonm:c4} $\Phi$ lifts $\Lambda$ on $\ell_\infty(X)/c_0(X)$. 
\pushcounter
\end{enumerate}
\eqref{nonm:c2} Suppose the statement fails for $q\in \roeq(Y)$ and pick $b\in \cstu(Y)$ with $q=\pi_Y(b)$. Fix $S\subseteq X$ and $\eps>0$ such that $\norm{[q,\pi_Y(\Phi(\chi_S))]}>\eps$. 
We shall find sequences $F(n)$ and $Y(n)$, for $n\in \bbN$ that satisfy the following. 
\begin{enumerate}
\popcounter
\item\label{L5.6.2} The sets $F(n)$ are finite and disjoint subsets of $S$. 
\item \label{L6.6.2} The sets $Y(n)$ are finite and disjoint subsets of $Y$. 
\item \label{L7.6.2} The following hold for all $m\neq n$. 
\begin{enumerate}
\item \label{L7a.6.2} $\|\Phi(\chi_{F(m)})\chi_{Y(n)}\|<2^{-m-n}\e$,
\item \label{L7b.6.2} 
$\|\Phi(\chi_{F(m)})b \chi_{Y(n)}\|<2^{-m-n}\e$, and 
\item \label{L7c.6.2} 
$\|\chi_{Y(n)}[b,\Phi(\chi_{F(n)})]\chi_{Y(n)}\|>\e$.
\end{enumerate}
\pushcounter
\end{enumerate}
Note that we have
 \begin{enumerate}
 \popcounter
\item \label{L8.6.2} $\Phi(\chi_S)=\sum_{x\in S}\Phi(\chi_{\{x\}})$
\pushcounter
\end{enumerate}
 where the series on the right-hand side converges in the strong topology. 
Since $\|[q,\pi_Y(\Phi(\chi_S))]\|>\e$, we can find a finite $Y(1)\subset Y$ large enough to have  $\|\chi_{Y(1)}[b,\Phi(\chi_{S})]\chi_{Y(1)}\|>\e$. Since the series in \eqref{L8.6.2}
strongly converges to $\Phi(\chi_{S})$, we can find a finite $F(1)\subset S$ large enough to satisfy \eqref{L7c.6.2} for $n=1$. 

By applying 
Lemma~\ref{L:AA} 
with $b$ being each of $\chi_{Y(1)}, b\chi_{Y(1)}$, and $\chi_{Y(1)}b$, 
we obtain a finite subset $E\subset X$ such that for all $L\subseteq X\setminus E$
all of $\Phi(\chi_L)\chi_{Y(1)}$, $\chi_{Y(1)}\Phi(\chi_L)$, 
$\Phi(\chi_L)b\chi_{Y(1)}$, and $\chi_{Y(1)}b\Phi(\chi_L)$ have norm smaller than $2^{-3}\e$. 

With $S'=S\setminus E$ we have 
$\|[b,\Phi(\chi_{S'})]\|\geq\|[q,\pi_Y(\Phi(\chi_{X'}))]\|>\e$, and we can find a finite $Y(2)\subseteq Y\setminus Y(1)$ large enough to satisfy $\|\chi_{Y(2)}[b,\Phi(\chi_{S'})]\chi_{Y(2)}\|>\e$. Since the series in \eqref{L8.6.2}
strongly converges to $\Phi(\chi_{S})$, we can find a finite $F(2)\subseteq S'$ large enough  to satisfy \eqref{L7c.6.2} for $n=2$.

Proceeding in this manner, we construct the sequences $F(n)$, and $Y(n)$ with the required
properties. 

Since $\SI$ is nonmeager and contains all finite sets, by Proposition~\ref{T:J-NT} there is
an infinite $L\subseteq \bbN$ such that $F=\bigcup_{j\in L} F(j)$ belongs to $ \SI$. Hence, by our assumptions, $q$ commutes with $\pi_Y(\Phi(\chi_{F}))$. However, \eqref{L7.6.2} implies that $[b,\Phi(\chi_{F})]$ is not compact; contradiction. 

 \eqref{nonm:c3} Suppose for a contradiction that $b\coloneqq 1-\Phi(1)$ is not compact. Lemma~\ref{prop:stayinside} implies $b\in \cstu(Y)$ and therefore 
 $q\coloneqq \pi_Y(b)$ is a nonzero projection in $\roeq(Y)$.

 Since $q$ commutes with  $\pi_Y(\Phi(a))$ for all $a\in\ell_\infty(X)$ and $\Phi$ lifts $\Lambda$ on $\SI$, 
 $\Lambda^{-1}(q)$ commutes with $\pi_X(\chi_S)$ for all $S\in\SI$. By \ref{nonm:c2}, $\Lambda^{-1}(q)$ commutes with $\chi_S$ for all $S\subseteq \NN$. The canonical copy of $\ell_\infty(X)$ is a masa \footnote{A masa is a maximal abelian self-adjoint subalgebra} in $\cB(\ell_2(X))$, therefore by Johnson-Parrott's Theorem (\cite{JohPar}, see also \cite[Theorem~12.3.2]{Fa:Combinatorial}) $\ell_\infty(X)/c_0(X)$ is a masa in the Calkin algebra, and therefore in $\roeq(X)$. This implies that $\Lambda^{-1}(q)$ is a projection in $\ell_\infty(X)/c_0(X)$.  Fix $S\subseteq \bbN$ such that $\Lambda^{-1}(q)=\pi_X(\chi_S)$.

Since $\SI$ is nonmeager, it is dense (Theorem~\ref{T:J-NT}), hence there exists an infinite $S'\subseteq S$ such that $S'\in\SI$. Since $\chi_{S'}\leq 1$, it follows that $\Phi(\chi_{S'})b$ is compact, hence $\pi_Y(\Phi(\chi_{S'}))q=0$. On the other hand, $S'\in\SI$ implies that $\pi_Y(\Phi(\chi_{S'}))=\Lambda(\pi_X(\chi_{S'}))\leq q$ and $\pi_Y(\Phi(\chi_{S'}))=0$. This is a contradiction, since $S'$ is infinite.

\eqref{nonm:c4} We will show that $\pi_Y(\Phi(\chi_S))=\Lambda(\pi_X(\chi_S))$ for 
all $S\subseteq X$. 

Consider $\cP(X)$ with respect to the compact metric topology inherited from $2^X$. 
Also consider the unit ball of $\cB(\ell_2(Y))$ with respect to the strong operator topology. 
It is a Polish space, and it has the unit ball of $\mathcal K(\ell_2(Y))$ as a Borel subset 
 (e.g., \cite[Lemma~2.5]{Farah.C}). 

Since $\Phi$ is strongly continuous, the function 
\[
\cP(X)\ni L\mapsto \Phi(\chi_L)\in \cB(\ell_2(Y))
\]
is continuous. 
Let $F\colon \cstu(X)\to \cstu(Y)$ be any lifting for $\Lambda$ and fix $S\subseteq X$. 
Since $\Phi(\chi_S)$ and $F(\chi_S)$ are fixed, 
the ideal 
\[
\SI_{S}=\{T\subseteq X\colon \Phi(\chi_T)(\Phi(\chi_S)-F(\chi_S))\in\mathcal K(\ell_2(Y))\}
\]
is, as a continuous preimage of a Borel set, Borel itself. 
For every $T\in\SI$ each of 
$\Phi(\chi_T)\Phi(\chi_S)-\Phi(\chi_{S\cap T})$, $\Phi(\chi_T)-F(\chi_T)$,
 and $ F(\chi_T)F(\chi_S)-F(\chi_{S\cap T})$ is compact.
 Therefore, $\SI\subseteq \SI_S$. 

Suppose $X\notin \SI_S$. 
Since $\SI_S$ is a proper Borel ideal that includes all finite subsets of $X$, it is meager
(see e.g., \cite[\S3.10]{FarahBook2000}). This implies that $\SI$ is meager; contradiction.

 Therefore $X\in \SI_S$ and 
\[
\Phi(1)(\Phi(\chi_S)-F(\chi_S))\in\mathcal K(\ell_2(Y)).
\]
This, together with \ref{nonm:c3}, implies that $\Phi(\chi_S)-F(\chi_S)\in\mathcal K(\ell_2(Y))$. Since $S$ was arbitrary, \eqref{nonm:c4} follows.
This concludes the proof. 
 \end{proof}

We can now prove our lifting result.

\begin{proof}[Proof of Theorem~\ref{T:blackbox}]
Assume $\OCA$ and $\MA_{\aleph_1}$, and fix an isomorphism $\Lambda$ 
between the uniform Roe coronas of countable metric spaces $X$ and $Y$. 
By Theorem~\ref{ThmAlessandroPaulHomomorphism}, 
$\Lambda$ is almost liftable on the diagonal. 
By Proposition~\ref{PropCofiniteSubRank1}, $\Lambda$ is liftable on the diagonal. 
\end{proof}

\section{From lifts to coarse equivalence}
\label{SectionCoarseEquiv}

This section is devoted to the proof of Theorem~\ref{ThmRigidityRoeCorona}. As a corollary to Theorem~ \ref{T:blackbox} and Theorem \ref{ThmRigidityRoeCorona}, we also obtain Corollary \ref{Cor.ThmRigidityRoeCorona}.
 
 \begin{proposition}\label{X:PropCofiniteSubRank1.General}
Suppose $X$ and $Y$ are countable metric spaces and $\Lambda$ is an isomorphism between 
the uniform Roe coronas of $X$ and $Y$. Moreover, let $\Phi\colon \ell_\infty(X)\to \cB(\ell_2(Y))$ be a strongly continuous $^*$-homo\-mor\-phism which almost lifts $\Lambda$ on the diagonal and such that  
\[
\inf_{x\in X'}\sup_{y\in Y}\|\Phi(e_{xx})\delta_y\|>0
\]
for some cofinite $X'\subseteq X$. Then  $\Phi(e_{xx})$ is a rank 1 projection for cofinitely many $x\in X$.
\end{proposition} 

 \begin{proof}
Let $X'\subseteq X$ be as in the statement of the lemma. In particular, $\Phi(e_{xx})\neq 0$ for all $x\in X'$. It remains to prove that $\Phi(e_{xx})$ has rank not greater than 1 for cofinitely many $x\in X'$. Suppose otherwise and fix a sequence $(x_n)_n$ in $X$ of distinct elements such that $p_n=\Phi(e_{x_nx_n})$ has rank at least 2 for all $n\in\N$. As $X'$ is cofinite in $X$, discarding some elements if necessary, we can assume that $(x_n)_n$ is a sequence in $X'$.
 Proposition~\ref{PropCofiniteSubRank1} implies that $\Phi$ lifts $\Lambda$ on $\ell_\infty(X)/c_0(X)$
and therefore $\sum_{n\in S} p_n\in \cstu(Y)$ for all $S\subseteq \bbN$. By Lemma \ref{LemmaToolFiniteRank.General} there are an infinite set $M\subseteq\N$ and nonzero projections $(q_n)_n$ with $q_n<p_n$ and $q=\sum_{n\in M} q_n\in \cstu(X)$. Since $q$ commutes with $\Phi[\ell_\infty(X)]$, $\pi_Y(q)$ commutes with $\Lambda(\ell_{\infty}(X)/c_0(X))$. Since $\ell_\infty(X)/c_0(X)$ is a masa in $\roeq(X)$, and $\Lambda$ is an isomorphism, $\Lambda(\ell_{\infty}(X)/c_0(X))$ is a masa in $\roeq(Y)$ and we have that $\pi_Y(q)\in\Lambda(\ell_{\infty}(X)/c_0(X))$. On the other hand, $\norm{\pi_Y(q-\Phi(\chi_S))}\geq 1$ for all $S\subseteq \bbN$; contradiction.
\end{proof}

 Lemma \ref{LemmaDefiMaps.General} below will allow us to define maps $f\colon X\to Y$ and $g\colon Y\to X$ which will witness that $X$ and $Y$ are coarsely equivalent. 
This lemma, as well as the auxiliary Lemma~\ref{LemmaDefiMaps0}, 
have the following mouthful of an assumption as the starting point.

\begin{assumption}\label{Assumption}
Suppose $(X,d)$ and $(Y,\partial)$ are countable metric spaces and
$\Lambda\colon \roeq(X)\to \roeq(Y)$ 
is an isomorphism which is liftable on diagonals. Let 
 $\Phi\colon \ell_\infty(X)\to \cstu(Y)$ and $\Psi\colon \ell_\infty(Y)\to \cstu(X)$ be strongly continuous $^*$-homomorphisms which lift $\Lambda$ on $\ell_\infty(X)/c_0(X)$ and $\Lambda^{-1}$ on $\ell_\infty(Y)/c_0(Y)$, respectively. 
\end{assumption}

\begin{lemma}\label{LemmaDefiMaps0} Suppose $X,Y,\Lambda,\Phi$, and $\Psi$ are as in Assumption~\ref{Assumption}. Let $X'\subseteq X$, $\epsilon>0$, and $f\colon X'\to Y$. If $\|\Phi(e_{xx})e_{f(x)f(x)}\|> \epsilon$ for all $x\in X'$, then 
\[
\{x\in X'\colon \|\Psi(e_{f(x)f(x)})e_{xx}\|<\eps-\delta\}
\]
 is finite for all $\delta>0$. On the other hand, if $\|\Psi(e_{f(x)f(x)})e_{xx}\|>\epsilon$ for all $x\in X'$, then 
 \[
\{x\in X'\colon \|\Phi(e_{xx})e_{f(x)f(x)}\|<\eps-\delta\}
\]
 is finite for all $\delta>0$.
\end{lemma}

\begin{proof}
Suppose the first statement fails for $\delta>0$ and pick a sequence $(x_n)_n$ in $X'$ such 
that $\|\Psi(e_{f(x_n)f(x_n)})e_{x_nx_n}\|<\epsilon-\delta$ for all $n$. Since $\Phi$ is a strongly continuous $^*$-homomorphism and $\|\Phi(e_{xx})e_{f(x)f(x)}\|> \epsilon$ for all $x\in X'$, by going to a subsequence, we can assume that $(f(x_n))_n$ is a sequence of distinct elements. Therefore, by Lemma~\ref{L:AA}
and passing to a further subsequence, we can assume that 
\[
\max\Big(\norm{\Phi(e_{x_nx_n})e_{f(x_m)f(x_m)}},\norm{ \Psi(e_{f(x_n)f(x_n)})e_{x_mx_m}}\Big)
<2^{-m-n-1}\epsilon
\]
 whenever $n\neq m$. Let 
\[
q=\sum_n e_{f(x_n)f(x_n)}\text{ and }p=\sum_n \Phi(e_{x_nx_n}).
\]
Then $\norm{\pi_Y(pq)}\geq\epsilon$ by hypothesis, but
\begin{align*}
\norm{\Lambda^{-1}(\pi_Y(pq))}&=\norm{\Lambda^{-1}(\pi_Y(\Phi\Big(\sum_n e_{x_nx_n}\Big)))\Lambda^{-1}(\pi_Y(q))}\\
&= \norm{\pi_X\Big(\big(\sum_ne_{x_nx_n}\big)\Psi(q)\Big)} \\
&=\limsup_n\norm{\Psi(e_{f(x_n)f(x_n)})e_{x_nx_n}}\\
&<\epsilon-\delta,
\end{align*}
a contradiction.

The second statement follows analogously.
\end{proof}

\begin{lemma}\label{LemmaDefiMaps.General}
Suppose $X,Y,\Lambda,\Phi$, and $\Psi$ are as in Assumption~\ref{Assumption}. Suppose in addition that \[\inf_{x\in X'}\sup_{y\in Y}\|\Phi(e_{xx})\delta_y\|>0\]
for some cofinite $X'\subseteq X$. 
Then there exist $\delta>0$, a cofinite $X''\subseteq X'$, and a map $f\colon X''\to Y$ such that 
\[
\|\Phi(e_{xx})e_{f(x)f(x)}\|\geq \delta\text{ and }\|\Psi(e_{f(x)f(x)})e_{xx}\|\geq \delta,
\]
for all $x \in X''$. 
\end{lemma}

\begin{proof}
Let $X'$ be as in the statement of the lemma, and pick $\delta>0$ and $f\colon X'\to Y$ such that $\|\Phi(e_{xx})e_{f(x)f(x)}\|\geq \delta$ for all $x\in X'$. By Lemma \ref{LemmaDefiMaps0}, replacing $\delta$ by a smaller $\delta$ if necessary, we can pick a cofinite $X''\subseteq X'$ such that $f\restriction X''$ has the required property.
\end{proof}

Our next goal is to prove that the maps given by Lemma \ref{LemmaDefiMaps.General} are coarse. First, we need a simple lemma.

\begin{lemma}\label{PropNormEqualityRank1}
Let $H$ be a Hilbert space, $p,q,a,b,c\in \cB(H)$ and $\beta\in \mathbb R$. Assume that $p$, $q$, $a$ and $\beta b$ are projections, $a$ has rank 1, $c=ac$, and $c^*c=b$. Then 
\[
\|pcq\|=\sqrt{\beta}\cdot \|ap\|\cdot\|bq\|.
\]
In particular, if $a=b=c$, it follows that $\|paq\|=\|ap\|\cdot\|aq\|$.
\end{lemma}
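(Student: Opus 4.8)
\textbf{Proof plan for Lemma~\ref{PropNormEqualityRank1}.}

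The plan is to reduce everything to the rank-one projection $a$ and compute directly. Since $a$ is a rank $1$ projection, write $a = \xi \otimes \xi$ (i.e., $a\eta = \langle \eta, \xi\rangle \xi$) for a unit vector $\xi \in H$. The hypothesis $c = ac$ says $\ran(c) \subseteq \ran(a) = \mathbb{C}\xi$, so every vector $c\eta$ is a scalar multiple of $\xi$; concretely $c = \xi \otimes \zeta$ for some $\zeta \in H$, where $\zeta$ is determined by $c^*\xi$. Then $c^*c = \zeta \otimes \zeta$, and the hypothesis $c^*c = b$ with $\beta b$ a projection forces $\beta b$ to be the rank $\leq 1$ projection onto $\mathbb{C}\zeta$, so (assuming $\zeta \neq 0$; the degenerate case $c = 0$ is trivial) we get $\|\zeta\|^2 = 1/\beta$ and $b = \beta^{-1} (\hat\zeta \otimes \hat\zeta)$ where $\hat\zeta = \zeta/\|\zeta\|$.

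Next I would compute each of the three norms in terms of $\xi$ and $\hat\zeta$. For any operator $T$ and unit vector $v$ one has $\|T(v\otimes v)\| = \|Tv\|$ and $\|(v\otimes v)T\| = \|T^*v\|$. Hence $\|ap\| = \|(\xi\otimes\xi)p\| = \|p^*\xi\| = \|p\xi\|$ (using $p = p^* = p^2$), and similarly $\|bq\| = \beta^{-1}\|q\hat\zeta\|$. For the left-hand side, $pcq = p(\xi\otimes\zeta)q = (p\xi) \otimes (q^*\zeta) = (p\xi)\otimes(q\zeta)$, and the norm of a rank-one operator $u \otimes v$ is $\|u\|\,\|v\|$, so $\|pcq\| = \|p\xi\|\cdot\|q\zeta\| = \|p\xi\|\cdot\|\zeta\|\cdot\|q\hat\zeta\| = \|p\xi\| \cdot \beta^{-1/2} \cdot \|q\hat\zeta\|$. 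Comparing, $\|pcq\| = \beta^{-1/2}\|p\xi\|\,\|q\hat\zeta\| = \sqrt\beta \cdot \|p\xi\| \cdot (\beta^{-1}\|q\hat\zeta\|) = \sqrt\beta\,\|ap\|\,\|bq\|$, as desired. The final sentence is the case $\xi = \hat\zeta$, $\beta = 1$.

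The only real subtlety is bookkeeping around the degenerate cases: if $c = 0$ then $b = 0$, both sides vanish, and there is nothing to prove; and one should note $\beta > 0$ is implicit once $b \neq 0$ since $\beta b$ is a nonzero projection and $b = c^*c \geq 0$. The main (very mild) obstacle is just making the rank-one factorizations $a = \xi\otimes\xi$, $c = \xi\otimes\zeta$ precise and tracking adjoints; no deep input is needed. I would present the argument with the $\otimes$ notation introduced explicitly at the start so the three norm computations become one-liners.
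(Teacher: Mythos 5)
Your proof is correct, including the treatment of the degenerate case $c=0$ and the observation that $\beta\|\zeta\|^2=1$ once $c\neq 0$. It does, however, take a different route from the paper. The paper never introduces vectors: it notes that $cqc^*=acqc^*a$ is a positive operator compressed into the rank-one range of $a$, hence equals $\lambda a$ for some $\lambda\ge 0$, giving $\|pcq\|^2=\|pcqc^*p\|=\lambda\|pap\|=\lambda\|ap\|^2$ by the $\mathrm{C}^*$-identity; then it computes $\lambda=\|cqc^*\|=\|cq\|^2=\|qbq\|=\beta\|bq\|^2$ by further applications of the same identity. Your argument instead makes the rank-one structure explicit, writing $a=\xi\otimes\xi$, $c=\xi\otimes\zeta$, $b=\zeta\otimes\zeta$ and reducing each norm to $\|u\otimes v\|=\|u\|\,\|v\|$. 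The two proofs exploit the same underlying fact (everything factors through the one-dimensional range of $a$), but yours is more concrete and arguably more transparent about where $\sqrt\beta$ comes from, while the paper's is shorter and stays entirely within $\mathrm{C}^*$-algebraic identities, which fits the style of the surrounding arguments. Either version is acceptable; if you write yours up, fix the inner-product convention for $u\otimes v$ once at the start, as you propose, since the identification $\zeta=c^*\xi$ and the adjoint bookkeeping in $pcq=(p\xi)\otimes(q\zeta)$ depend on it.
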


\begin{proof}
The \cstar-equality gives that $\|pcq\|^2=\|pcqc^*p\|$. Since $a$ is a rank $1$ projection and $cqc^*=acqc^*a$, there exists a positive $\lambda \in \mathbb R$ such that $\lambda a=cqc^*$. Therefore, using the \cstar-equality, it follows that \[\|pcq\|^2=\lambda\cdot \|pap\|=\lambda\cdot\|ap\|^2.\]
Using \cstar-equality unsparingly, we can compute $\lambda$. Indeed, 
\[\lambda=\|cqqc^*\|=\|cq\|^2=\| qc^*cq\|=\|qbq\|=\beta\cdot \|bq\|^2.\]
This finishes the proof.
\end{proof}

\begin{lemma}\label{LemmaTheMapsAreCoarse}
Suppose $X,Y,\Lambda,\Phi$, and $\Psi$ are as in Assumption~\ref{Assumption}. Suppose in addition that \[\inf_{x\in X'}\sup_{y\in Y}\|\Phi(e_{xx})\delta_y\|>0\]
for some cofinite $X'\subseteq X$. 
 Then for all $r,\delta>0$ 
there exists $s>0$ such that for all $x_1,x_2\in X$ and all $y_1,y_2\in Y$ the following holds:

If $d(x_1,x_2)\leq r$, 
$\|\Phi(e_{x_1x_1})e_{y_1y_1}\|\geq \delta$, and $\|\Phi(e_{x_2x_2})e_{y_2y_2}\|\geq \delta$,
then $\partial (y_1,y_2)\leq s$.
\end{lemma}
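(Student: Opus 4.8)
\textbf{Plan for the proof of Lemma~\ref{LemmaTheMapsAreCoarse}.}
The statement says: if $x_1,x_2$ are $r$-close in $X$, $e_{y_iy_i}$ "sees" $\Phi(e_{x_ix_i})$ with weight $\geq\delta$ for $i=1,2$, then $y_1$ and $y_2$ are within a bounded distance $s=s(r,\delta)$ in $Y$. The natural strategy is to argue by contradiction: suppose no such $s$ exists. Then for each $n$ we find $x_1^n,x_2^n\in X$ with $d(x_1^n,x_2^n)\leq r$, and $y_1^n,y_2^n\in Y$ with $\|\Phi(e_{x_i^nx_i^n})e_{y_i^ny_i^n}\|\geq\delta$, but $\partial(y_1^n,y_2^n)\to\infty$. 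The idea is to package these into operators whose propagation behaviour is controlled, apply the coarse-like property of $\Phi$ (Proposition~\ref{prop:stayinside}), and derive a contradiction from the fact that $\partial(y_1^n,y_2^n)$ is unbounded.

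First I would extract a useful subsequence. By going to a subsequence I may assume $\partial(y_1^n,y_2^n)$ is strictly increasing, and (after discarding finitely many terms and thinning) that the $y_1^n$ are pairwise distinct, the $y_2^n$ are pairwise distinct, and all the $y_i^n$ are distinct from each other; similarly I can try to arrange the $x_i^n$ to be "spread out" in $X$ — but here there is a subtlety, since $x_1^n$ and $x_2^n$ are $r$-close, so the pairs $\{x_1^n,x_2^n\}$ should be taken far apart from one another (possible after thinning, passing to a further subsequence so that $d(\{x_1^m,x_2^m\},\{x_1^n,x_2^n\})\to\infty$; in the worst case where $X$ is not locally finite along these points one uses local finiteness supplied by Lemma~\ref{L.ulf.ghost}, which applies since all sparse subspaces of $X$ yield only compact ghost projections). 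Now set $a_n=e_{x_1^nx_2^n}$ (a partial isometry of propagation $\leq r$), and let $a=\sum_n a_n$; this sum converges strongly and, because the supports are spread out in $X$, in fact $a$ has propagation $\leq r$, so $a\in\cstu[X]\subseteq\cstu(X)$ and $\|a\|\leq 1$. Applying $\Phi$: since $\Phi$ is a $^*$-homomorphism, $\Phi(e_{x_1^nx_2^n})\Phi(e_{x_2^nx_2^n})=\Phi(e_{x_1^nx_2^n})$ and $\Phi(e_{x_1^nx_1^n})\Phi(e_{x_1^nx_2^n})=\Phi(e_{x_1^nx_2^n})$, so $\Phi(a_n)$ intertwines the projections $\Phi(e_{x_2^nx_2^n})$ and $\Phi(e_{x_1^nx_1^n})$, both of which are nonzero.

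Next, the key estimate. Using $\|\Phi(e_{x_i^nx_i^n})e_{y_i^ny_i^n}\|\geq\delta$, I want to produce, for each $n$, a pair of unit vectors $\xi_n$ supported near $y_2^n$ and $\eta_n$ supported near $y_1^n$ with $|\langle \Phi(a_n)\xi_n,\eta_n\rangle|$ bounded below by some $\delta'=\delta'(\delta)>0$. Concretely, $e_{y_2^ny_2^n}\Phi(e_{x_2^nx_2^n})$ has norm $\geq\delta$, so there is a unit vector $\xi_n$ with $\|\Phi(e_{x_2^nx_2^n})e_{y_2^ny_2^n}\xi_n'\|\geq\delta$ for a suitable unit $\xi_n'$; pushing it through $\Phi(a_n)$ and pairing against the direction picked out by $e_{y_1^ny_1^n}\Phi(e_{x_1^nx_1^n})$ should give a definite lower bound — here Lemma~\ref{PropNormEqualityRank1} or a direct $\mathrm{C}^*$-identity computation with the rank-one-ish pieces $e_{y_iy_i}$ is the right tool for turning these operator-norm bounds into a single matrix coefficient bound, say $|\langle \Phi(a)\delta_{y_2^n},\delta_{y_1^n}\rangle|\geq\delta^{?}$ up to a controlled power, possibly after replacing $y_i^n$-projections by rank-one subprojections. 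Then $(y_2^n,y_1^n)\in\supp(\Phi(a))$ (up to an $\eps$-perturbation), with matrix coefficients bounded uniformly away from $0$, while $\partial(y_1^n,y_2^n)\to\infty$. But $\Phi$ is coarse-like (Proposition~\ref{prop:stayinside}): since $\propg(a)\leq r$, for $\eps=\delta'/2$ there is $k=k(r,\delta')$ and $b$ with $\propg(b)\leq k$ and $\|\Phi(a)-b\|<\eps$; hence $|\langle b\delta_{y_2^n},\delta_{y_1^n}\rangle|\geq\delta'/2>0$, forcing $\partial(y_1^n,y_2^n)\leq k$ — contradicting $\partial(y_1^n,y_2^n)\to\infty$. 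This yields the desired $s$; indeed one can read off $s=k(r,\delta')$.

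\textbf{Main obstacle.} The delicate point is the middle step: converting the two operator-norm inequalities $\|\Phi(e_{x_i^nx_i^n})e_{y_i^ny_i^n}\|\geq\delta$ into a single lower bound on the matrix coefficient $\langle\Phi(a)\delta_{y_2^n},\delta_{y_1^n}\rangle$ (or on $\|e_{y_1^ny_1^n}\Phi(a)e_{y_2^ny_2^n}\|$, which is what "bounded propagation violated" really needs), uniformly in $n$. The projections $\Phi(e_{x_i^nx_i^n})$ need not be rank one and need not be "aligned" in any way that makes the composition $e_{y_1^ny_1^n}\Phi(e_{x_1^nx_1^n})\Phi(a_n)\Phi(e_{x_2^nx_2^n})e_{y_2^ny_2^n}$ obviously large; one must be careful that passing through $\Phi(a_n)$ does not kill the vector selected on the $y_2^n$ side. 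I expect this to be handled by first invoking Proposition~\ref{X:PropCofiniteSubRank1} to reduce (on a cofinite set) to the case where each $\Phi(e_{x_ix_i})$ is a \emph{rank one} projection — then $\Phi(a_n)$ is, up to scalars, a rank-one partial isometry between the ranges, Lemma~\ref{PropNormEqualityRank1} applies cleanly, and the coefficient bound becomes a product of the two given norms. Discarding the finitely many bad $x$'s only enlarges $s$ by a bounded amount (one can bound $\partial$ on the finite exceptional set trivially), so the reduction is harmless. The spreading-out arguments (to keep $\propg(a)\leq r$ and to make cross terms $\|\Phi(a_m)(\cdot)\chi_{Y(n)}\|$ summable, via Lemma~\ref{L:AA}) are routine thinning arguments of the kind already used repeatedly in the paper.
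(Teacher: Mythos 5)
Your overall architecture (contradiction, thinning via Lemma~\ref{L:AA}, rank-one reduction via Proposition~\ref{X:PropCofiniteSubRank1}, and Lemma~\ref{PropNormEqualityRank1} to turn two norm bounds into one matrix-coefficient bound) matches the paper, and you correctly flagged the matrix-coefficient step as the delicate point. But there is a genuine gap at the very center of your plan: you apply $\Phi$ to the off-diagonal operators $a_n=e_{x_1^nx_2^n}$ and to $a=\sum_n a_n$. The map $\Phi$ is only a strongly continuous $^*$-homomorphism on $\ell_\infty(X)$, i.e.\ on the diagonal; $\Phi(e_{x_1^nx_2^n})$ and $\Phi(a)$ are simply undefined, and with them the claimed intertwining relations $\Phi(e_{x_1^nx_1^n})\Phi(a_n)=\Phi(a_n)$, the coefficient bound, and the final appeal to the coarse-like property of $\Phi$ for a propagation-$r$ input all collapse (coarse-likeness of $\Phi$, as in Proposition~\ref{prop:stayinside}, only ever concerns diagonal inputs, which have propagation $0$).

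The paper's way around this is the step your plan is missing: form $s=\sum_n e_{x_n^1x_n^2}\in\cstu(X)$ (propagation $\leq r$), and choose an arbitrary lift $e\in\cstu(Y)$ of $\Lambda(\pi_X(s))$ -- using $\Lambda$ itself, not $\Phi$, to move the off-diagonal operator. The price is that $e$ relates to the projections $\Phi(e_{x_n^ix_n^i})$ only modulo compact operators (via the exact identities $s=(\sum_n e_{x_n^2x_n^2})s=s(\sum_n e_{x_n^1x_n^1})$ and the fact that $\Phi$ lifts $\Lambda$ on the diagonal). Consequently the per-$n$ lower bound you hoped to read off directly from Lemma~\ref{PropNormEqualityRank1} is not automatic: after replacing $\Phi(e_{x_n^ix_n^i})$ by nearby rank-one projections $a_n,b_n$ supported on disjoint finite blocks $Z_n$, one must prove separately that $\liminf_n\|a_neb_n\|>0$; the paper does this by a noncompactness argument (if the norms were summable along a subsequence, $\Lambda\circ\pi_X$ applied to the noncompact $\sum_{n\in M}e_{x_n^1x_n^2}$ would be zero, contradicting injectivity of $\Lambda$). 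Only then does Lemma~\ref{PropNormEqualityRank1} give $\|\chi_{\{y_n^2\}}a_neb_n\chi_{\{y_n^1\}}\|\geq\gamma\delta^2/4$, which contradicts $\lim_n\|\chi_{\{y_n^2\}}e\chi_{\{y_n^1\}}\|=0$, a consequence merely of $e\in\cstu(Y)$ and $\partial(y_n^1,y_n^2)\to\infty$ -- no coarse-like property is needed at all. So the argument can be repaired, but not by the route you describe: the lift $e$ of $\Lambda(\pi_X(s))$, the modulo-compact intertwining, and the $\liminf$ claim are essential ingredients your proposal does not supply.
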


\begin{proof}
Suppose otherwise. Then there exist 
 $r,\delta>0$, 
 sequences $(x^1_n)_n$ and $(x^2_n)_n$ in $X$, and sequences $(y^1_n)_n$ and $(y^2_n)_n$ in $Y$ such that $d(x_n^1,x_n^2)\leq r$, 
$\|\Phi(e_{x_n^1x_n^1})e_{y_n^1y_n^1}\|\geq \delta$, $\|\Phi(e_{x_n^2x_n^2})e_{y_n^2y_n^2}\|\geq \delta$, and 
$\partial(y_n^1,y_n^2)\geq n$ for all $n\in\N$. Let $X_1=\{x_n^1\}_n$, $X_2=\{x_n^2\}_n$, $Y_1=\{y_n^1\}_n$, and $Y_2=\{y_n^2\}_n$.

\begin{claim}
The sets $X_1,X_2,Y_1$ and $Y_2$ are infinite.
\end{claim}
\begin{proof}
Since $\{\partial(y_1,y_2)\colon y_1\in Y_1, y_2\in Y_2\}$ is unbounded, $Y_1\cup Y_2$ is infinite. Hence $Y_1$ or $Y_2$ must be infinite. 

Since $X$ is locally finite, $X_2\subseteq\{x\in X\colon\exists x_1\in X_1,\ d(x,x_1)\leq r\}$, 
 and $X_1\subseteq\{x\in X\colon\exists x_2\in X_2, d(x,x_2)\leq r\}$, we conclude 
that $X_1$ is finite if and only if $X_2$ is finite. 

Also, if $X_1$ is finite, then so is $Y_1$. Indeed, as $\Phi(e_{xx})$ is compact for all $x\in X$, the set $\{y\in Y\colon \|\Phi(e_{xx})e_{yy}\|\geq \delta\}$ is finite for all $x\in X$. Moreover, since $\sum_{x\in X}\Phi(e_{xx})$ converges in the strong operator topology, the set $\{x\in X\colon \|\Phi(e_{xx})e_{yy}\|\geq \delta\}$ is finite for every $y\in Y$. In particular, $X_1$ is finite if and only if $Y_1$ is finite. Similarly, $X_2$ is finite if and only if $Y_2$ is finite. This finishes the proof.
\end{proof}

 By Proposition~\ref{X:PropCofiniteSubRank1.General},
 we can assume that $\Phi(e_{x_n^1x_n^1})$ and $\Phi(e_{x_n^2x_n^2})$ have rank~1 for all $n$. By Lemma \ref{L:AA}, and going to a subsequence if necessary, we can find a sequence of finite disjoint subsets $(Z_n)_n$ of $Y$, and sequences of rank 1 projections $(a_n)_n$ and $(b_n)_n$ such that $\|a_n-\Phi(e_{x_n^1,x_n^1})\|< 2^{-n}$, $\|b_n-\Phi(e_{x_n^2,x_n^2})\|<2^{-n}$, and $a_n,b_n\in \cB(\ell^2(Z_n))$ for all $n\in\N$. By going to a subsequence, assume that $\|a_ne_{y_n^2y_n^2}\|\geq \delta/2$ and $\|b_ne_{y_n^1y_n^1}\|\geq \delta/2$ for all $n\in\N$.

Let $s=\sum_ne_{x_n^2x_n^1}$. Then $s\in \cB(\ell_2(X))$ and
$\propg(s)=\sup_n d(x_n^1,x_n^2) \leq r$. Pick $e\in\cstu(Y)$ which satisfies $\pi_Y(e)=\Lambda(\pi_X(s))$.

\begin{claim}
$\lim_n\|a_neb_n-\chi_{Z_n}e\chi_{Z_n}\|=0$.
\end{claim}

\begin{proof}
Since $s=s(\sum_ne_{x_n^1x_n^1})$ and $s=(\sum_ne_{x_n^2x_n^2})s$, we have that both $(\sum_n\Phi(e_{x_n^2x_n^2}))e-e$ and $e(\sum_n\Phi(e_{x_n^1x_n^1}))-e$ are compact. Hence, since $\sum_n(a_n-\Phi(e_{x_n^2x_n^2}))$ and $\sum_n(b_n-\Phi(e_{x_n^1x_n^1}))$ are compact, it follows that $(\sum_na_n)e-e$ and $e(\sum_nb_n)-e$ are compact. Therefore, it follows that $\lim_n\|\chi_{Z_n}eb_n-\chi_{Z_n}e\chi_{Z_n}\|=0$ and $\lim_n\|a_ne\chi_{Z_n}-\chi_{Z_n}e\chi_{Z_n}\|=0$. 
Since $\chi_{Z_n}b_n=b_n$ and $a_n\chi_{Z_n}=a_n$, the claim follows.
\end{proof}

\begin{claim}\label{Claim.lim0}
$\lim_n\|\chi_{\{y^2_n\}}a_neb_n\chi_{\{y^1_n\}}\|=0$.
\end{claim}

\begin{proof}
First notice that, since $e\in \cstu(Y)$ and $\lim_n\partial (y^1_n,y^2_n)=\infty$, it follows that $\lim_n\|\chi_{\{y^2_n\}}e\chi_{\{y^1_n\}}\|=0$. In particular, 
\[\lim_n\|\chi_{\{y^2_n\}}\chi_{Z_n}e\chi_{Z_n}\chi_{\{y^1_n\}}\|\leq \lim_n\|\chi_{\{y^2_n\}}e\chi_{\{y^1_n\}}\|=0.\]
Therefore, by Claim \ref{Claim.lim0}, $\lim_n\|\chi_{\{y^2_n\}}a_neb_n\chi_{\{y^1_n\}}\|=0$.
\end{proof}

\begin{claim}
$\lim\inf_n\|a_neb_n\|>0$.
\end{claim}

\begin{proof}
If the claim fails, pick an infinite $M\subset \N$ such that $\sum_n\|a_neb_n\|<\infty$. So, $\sum_{n\in M}a_neb_n$ is compact. Going to a further infinite $M\subset \N$ if necessary, assume also that $\|a_neb_m\|<2^{-n-m}$ for all $n,m\in M$ with $n\neq m$. So, $\sum_{n\in M}a_neb_n-(\sum_{n\in M}a_n)e(\sum_{n\in M}b_n)$ is compact. 

Therefore, as 
\[
\textstyle (\sum_{n\in M}e_{x^2_nx^2_n})s(\sum_{n\in M}e_{x^1_nx^1_n})=\sum_{n\in M}e_{x^2_nx^1_n}
\]
is not compact, and as $\sum_{n\in M}(a_n-\Phi(e_{x_n^2x_n^2}))$ and $\sum_{n\in M}(b_n-\Phi(e_{x_n^1x_n^1}))$ are compact, this shows that 
\begin{align*}
\pi_Y\Big(\sum_{n\in M}a_neb_n\Big)&=\pi_Y\Big(\sum_{n\in M}a_n\Big)\pi_Y(e)\pi_Y\Big(\sum_{n\in M}b_n\Big)\\
&=\pi_Y\Big(\sum_{n\in M}\Phi(e_{x^2_nx^2_n})\Big)\pi_Y(e)\pi_Y\Big(\sum_{n\in M}\Phi(e_{x^1_nx^1_n})\Big)\\
&=\Lambda\circ\pi_X\Big(\Big(\sum_{n\in M}e_{x^2_nx^2_n}\Big)s\Big(\sum_{n\in M}e_{x^1_nx^1_n}\Big)\Big)
\end{align*}
is nonzero; contradiction.
\end{proof}

Let $\gamma=\inf_n\|a_neb_n\|$, so $\gamma>0$. Pick $n\in\N$ large enough to satisfy $\|\chi_{\{y^2_n\}}a_neb_n\chi_{\{y^1_n\}}\|< \gamma \delta^2$. Since $b_n$ is a rank 1 projection, $b_ne^*a_neb_n=\beta b_n$ for some positive $\beta\in \mathbb R$. By choice of $\gamma$, it follows that $\beta=\|a_neb_n\|^2\geq \gamma^2$. Applying Lemma \ref{PropNormEqualityRank1} with $c=a_neb_n$, $a=a_n$, $b=\beta b_n$, $p=\chi_{\{y^2_n\}}$, and $q=\chi_{\{y^1_n\}}$, we have that
\[
\|\chi_{\{y^2_n\}}a_neb_n\chi_{\{y^1_n\}}\|=\sqrt{\beta}\cdot \|a_n\chi_{\{y^2_n\}}\|\cdot\|b_n\chi_{\{y^1_n\}}\|\geq\frac{ \gamma\delta^2}{4};
\]
contradiction.
\end{proof}

We now show that a more technical version of Theorem \ref{ThmRigidityRoeCorona} holds. This is useful since it does not impose any geometric restrictions on either $X$ or $Y$.

\begin{theorem}\label{ThmArtificial}
Let $X$ and $Y$ be u.l.f. metric spaces and $\Lambda:\roeq(X)\to \roeq(Y)$ be an isomorphism. Moreover, suppose there are strongly continuous $*$-homomorphisms $\Phi:\ell_\infty(X)\to \cstu(Y)$ and $\Phi:\ell_\infty(Y)\to \cstu(X)$ which lift $\Lambda$ on $\ell_\infty(X)/c_0(X)$ and $\ell_\infty(Y)/c_0(Y)$, respectively, and such that \[\inf_{x\in X'}\sup_{y\in Y}\|\Phi(e_{xx})\delta_y\|>0\text{ and }\inf_{y\in Y'}\sup_{x\in X}\|\Psi(e_{yy})\delta_x\|>0\]
for some cofinite subsets $X'\subseteq X$ and $Y'\subseteq Y$. Then $X$ and $Y$ are coarsely equivalent.
\end{theorem}

\begin{proof} 
 By Lemma \ref{LemmaDefiMaps.General}, there exist $\delta>0$, a cofinite $X'\subseteq X$, a cofinite $Y'\subseteq Y$, $f\colon X'\to Y$ and $g\colon Y'\to X$ such that 
\begin{enumerate}
\item\label{ItemfCoarse} $\|\Phi(e_{xx})e_{f(x)f(x)}\|\geq \delta$, for all $x\in X'$,
\item $\|\Psi(e_{f(x)f(x)})e_{xx}\|\geq \delta$, for all $x\in X'$,
\item\label{ItemgCoarse} $\|\Psi(e_{yy})e_{g(y)g(y)}\|\geq \delta$, for all $y\in Y'$, and
\item\label{ItemfgCloseIdentity} $\|\Phi(e_{g(y)g(y)})e_{yy}\|\geq \delta$, for all $y\in Y'$.
\end{enumerate}
Moreover, since $\Phi$ and $\Psi$ are strongly continuous, there exist cofinite subsets $X''\subseteq X'$ and $Y''\subseteq Y'$ such that $f(X'')\subseteq Y'$ and $g(Y'')\subseteq X'$.

Since $f$ and $g$ satisfy \ref{ItemfCoarse} and \ref{ItemgCoarse} above, respectively, Lemma \ref{LemmaTheMapsAreCoarse} implies that $f\restriction X'$ and $g\restriction Y'$ are coarse. Since $X'$ and $Y'$ are cofinite and since $X$ and $Y$ are locally finite, this shows that $f$ and $g$ are coarse. 

Let us verify that $g\circ f$ and $f\circ g$ are close to $\Id_X$ and $\Id_Y$, respectively. Since $f$ and $g$ satisfy \ref{ItemfCoarse} and \ref{ItemfgCloseIdentity} above and since $g(Y'')\subseteq Y'$, it follows that \[\|\Phi(e_{g(y)g(y)})e_{f(g(y))f(g(y))}\|\geq \delta\text{ and }\|\Phi(e_{g(y)g(y)})e_{yy}\|\geq \delta,\] for all $y\in Y''$. So, Lemma \ref{LemmaTheMapsAreCoarse} implies that 
\[\sup\{\partial (y,f\circ g(y))\colon y\in Y''\}<\infty.\]
Since $Y\setminus Y''$ is finite, this shows that $f\circ g$ is close to the identity $\Id_Y$. Similar arguments show that $g\circ f$ is close to $\Id_X$. 
\end{proof}

\begin{proof}[Proof of Theorem \ref{ThmRigidityRoeCorona}]
Let $\Lambda:\roeq(X)\to \roeq(Y)$ be an isomorphism which is liftable on the diagonals. We only need to notice that the hypothesis of Theorem \ref{ThmArtificial} are satisfied for $\Lambda:\roeq(X)\to \roeq(Y)$. As $\Lambda$ is liftable on the diagonals, there are strongly continuous $*$-homomorphisms $\Phi:\ell_\infty(X)\to \cstu(Y)$ and $\Psi:\ell_\infty(Y)\to \cstu(X)$ which lift $\Lambda$ on $\ell_\infty(X)/c_0(X)$ and $\ell_\infty(Y)/c_0(Y)$. For the last condition, notice that, by the lifting property of $\Phi$, we must have that $(\Phi(e_{xx}))_{x\in X}$ are non-zero finite rank projections for cofinitely many $x\in X$. Therefore, as all sparse subspaces of $X$ yield only compact ghost projections, Proposition \ref{PropThankGodForNoNoncompactGhostProj} implies that 
\[\inf_{x\in X'}\sup_{y\in Y}\|\Phi(e_{xx})\delta_y\|>0\] for some cofinite $X'\subseteq X$. The argument for $\Psi$ is exactly the same, so we are done by Theorem \ref{ThmArtificial}.
\end{proof}

\section{Bijective coarse equivalence}\label{SectionBijCoarseEquiv}

In this section we assume that our metric spaces satisfy a stronger geometric condition known as the operator norm localization property, ONL. If $X$ and $Y$ satisfy ONL, then 
the existence of an isomorphism between $\roeq(X)$ and $\roeq(Y)$ which 
is liftable on diagonals implies that $X$ and $Y$ are cofinitely bijectively coarsely equivalent. In the class of uniformly locally finite metric spaces ONL is equivalent to G. Yu's property~A (\cite[Theorem~4.1]{Sako2014}).

\begin{definition}\label{def:ONL}
A metric space $X$ has the \emph{operator norm localization property} (\emph{ONL}) if for all $\eps\in (0,1)$ and all $s>0$ there exists $r>0$ such that for every operator $a\in \cstu(X)$ with $\propg(a)\leq s$ there exists a unit vector $\xi\in \ell_2(X)$ with $\diam \{x\in X\colon \langle\xi,\delta_x\rangle\neq 0\}\leq r$ such that $\|a\xi\|\geq (1-\eps)\|a\|$.
\end{definition}

 The next lemma is the only instance in this section where the fact that the metric spaces have ONL is actively used.

\begin{lemma}\label{LemmaUdelta}
Suppose $X$ and $Y$ are metric spaces, and $Y$ has the ONL. Let $\Phi\colon \ell_\infty(X)\to \cstu(Y)$ be a coarse-like map. For all $\gamma>0$ and all $\eps>0$ there exists $r>0$ such that 
for all contractions $a\in \ell_\infty(X)$ and all $B\subseteq Y$ with $\|\chi_B\Phi(a)\|>\gamma$, there exists $D\subseteq Y$ with $\diam(D)<r$ such that \[\|\chi_{B\cap D}\Phi(a)\|\geq (1-\eps)\|\chi_{B}\Phi(a)\|.\]
\end{lemma}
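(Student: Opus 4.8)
The plan is to reduce the statement to a single application of ONL applied to the operator $a' = \Phi(a)^*\chi_B\Phi(a)$, or more precisely to a finite-propagation approximation of it. First I would fix $\gamma>0$ and $\eps>0$, and use the coarse-like hypothesis on $\Phi$: since $a\in\ell_\infty(X)$ has propagation $0$, for a suitable $\eps'>0$ (to be chosen in terms of $\gamma$ and $\eps$) there is $k\in\N$ such that $\Phi(a)$ can be $\eps'$-$k$-approximated. So fix $b\in\cB(\ell_2(Y))$ with $\propg(b)\le k$ and $\|\Phi(a)-b\|\le\eps'$. The point of passing to $b$ is that $b^*\chi_B b$ has propagation at most $2k$ (propagation adds under products, and $\chi_B$ has propagation $0$), so ONL is applicable to it; the error introduced by replacing $\Phi(a)$ with $b$ is controlled by $\eps'\|\Phi(a)\|\le\eps'$ (after normalizing, $a$ being a contraction, $\|\Phi(a)\|\le 1$).

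Next I would apply the ONL property of $Y$ to the operator $c = b^*\chi_B b$, which is a positive contraction of propagation $\le 2k$, with parameters $s=2k$ and an error $\eps''$ small relative to $\eps$. This yields some $r'>0$ (depending only on $k$, hence only on $\gamma,\eps$) and a unit vector $\xi\in\ell_2(Y)$ supported on a set $D_0$ of diameter $\le r'$ with $\|c\xi\|\ge(1-\eps'')\|c\|$. But I actually want control on $\|b\xi\|$, i.e. on $\langle c\xi,\xi\rangle=\|\chi_B b\xi\|^2$, rather than on $\|c\xi\|$. The clean fix is to apply ONL not to $c$ but to $\chi_B b$ itself: $\chi_B b$ has propagation $\le k$ (product of the propagation-$0$ operator $\chi_B$ with $b$), and $\|\chi_B b\|$ is within $\eps'$ of $\|\chi_B\Phi(a)\|>\gamma$. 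ONL then gives a unit vector $\xi$ supported on a set $D_0$ with $\diam(D_0)\le r'$ and $\|\chi_B b\xi\|\ge(1-\eps'')\|\chi_B b\|$. Since $\xi$ is supported on $D_0$, we have $\chi_B b\xi=\chi_B b\chi_{D_0'}\xi$ where $D_0'$ is a $k$-neighbourhood thickening — wait, more directly: $b\xi$ is supported on the $k$-neighbourhood $D$ of $D_0$, which has diameter $\le r':=r'_0+2k$, and $\chi_B b\xi=\chi_{B\cap D}b\xi$. Hence $\|\chi_{B\cap D}b\|\ge\|\chi_{B\cap D}b\xi\|=\|\chi_B b\xi\|\ge(1-\eps'')\|\chi_B b\|$.

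Finally I would translate back from $b$ to $\Phi(a)$: from $\|\Phi(a)-b\|\le\eps'$ we get $\|\chi_{B\cap D}\Phi(a)\|\ge\|\chi_{B\cap D}b\|-\eps'\ge(1-\eps'')\|\chi_B b\|-\eps'\ge(1-\eps'')(\|\chi_B\Phi(a)\|-\eps')-\eps'$, and using $\|\chi_B\Phi(a)\|>\gamma$ one checks that, with $\eps'$ and $\eps''$ chosen small enough in terms of $\gamma$ and $\eps$ at the outset, this is $\ge(1-\eps)\|\chi_B\Phi(a)\|$. Set $r=r'_0+2k$; this depends only on $\gamma$ and $\eps$ as required. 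The main obstacle, and the only genuinely delicate bookkeeping, is the order of quantifiers: $r$ (equivalently $k$, equivalently the ONL radius $r'_0$) must be chosen uniformly over all contractions $a$ and all $B\subseteq Y$, and this works precisely because coarse-likeness gives a $k$ depending only on the propagation ($=0$) of $a$ and on $\eps'$, and ONL gives $r'_0$ depending only on $s=k$ and $\eps''$ — neither depends on $a$ or $B$. I would also need to double-check that the elementary inequality juggling (choosing $\eps',\eps''$ so that the final bound comes out as $(1-\eps)\|\chi_B\Phi(a)\|$ using only $\|\chi_B\Phi(a)\|>\gamma$) actually closes; this is routine but should be done carefully since $\gamma$ can be small.
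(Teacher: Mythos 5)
Your argument is correct and takes essentially the same route as the paper: coarse-likeness gives a finite-propagation approximant uniformly over all contractions, ONL (with parameters depending only on $\gamma,\eps$) produces a localized unit vector, and a perturbation estimate using $\|\chi_B\Phi(a)\|>\gamma$ closes the bound. The only cosmetic difference is that the paper approximates $\chi_B\Phi(a)$ directly and applies ONL to the adjoint $b^*$, so its $D$ is just the support of $\xi$, whereas you apply ONL to $\chi_B b$ and then thicken the support of $\xi$ by the propagation $k$; both versions handle the quantifier order identically.
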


\begin{proof}
Fix $\gamma>0$ and $\eps>0$, and let $\delta=\eps\gamma/3$. Since $\Phi$ is coarse-like, there exists $s>0$ such that $\Phi(a)$ is $\delta$-$s$-approximated for all contractions $a\in \ell_\infty(X)$. In particular, $\chi_B\Phi(a)$ is $\delta$-$s$-approximated for all $B\subseteq Y$. Fix a contraction $a\in \ell_\infty(X)$ and $B\subseteq Y$ with $\|\chi_B\Phi(a)\|>\gamma$. Then, there exists $b\in\cstu(Y)$ with $\propg(b)\leq s$ such that 
\[\|b-\chi_B\Phi(a)\|\leq \frac{\delta}{\gamma}\|\chi_B\Phi(a)\|.\]
Since $Y$ has ONL, there exist $r>0$ and a unit vector $\xi\in \ell_2(Y)$ (depending only on $\delta$ and $s$, i.e., on $\gamma$ and $\eps$) such that $\diam\{y\in Y\colon \xi(y)\neq 0\}\leq r$ and $\|b^*\xi\|\geq (1-\delta/\gamma)\|b^*\|$. Therefore, letting $D=\diam\{y\in Y\colon \xi(y)\neq 0\}$, it follows that $\|\chi_Db\|\geq(1-\delta/\gamma)\|b\|$. 

We conclude that 
\begin{align*}
\|\chi_{B\cap D}\Phi(a)\|&\geq \|\chi_Db\|-\|\chi_Db-\chi_D\chi_B\Phi(a)\|\\
&\geq \Big(1-\frac{\delta}{\gamma}\Big)\|b\|-\|b-\chi_B\Phi(a)\|\\
&\geq \Big(1-\frac{\delta}{\gamma}\Big)\|\chi_B\Phi(a)\|-2\|b-\chi_B\Phi(a)\|\\
&\geq \Big(1-\frac{3\delta}{\gamma}\Big)\|\chi_B\Phi(a)\|.
\end{align*}
By our choice of $\delta$, we are done.
\end{proof}

\subsection{Finding a bijective coarse equivalence}\label{SubsectionProofofTheoremPropertyA}
This section has been inspired by methods developed in \cite[\S4]{WhiteWillett2017}. 
In it we prove Theorem \ref{ThmRigidityRoeCoronaPropertyA}, that if $X$ and $Y$ are uniformly locally finite metric spaces, $X$ has ONL, and their uniform Roe coronas are liftable on diagonals isomorphic, then $X$ and $Y$ have cofinite subspaces that are bijectively coarsely equivalent. 
 
\begin{assumption} \label{Assumption.bijective} 
Throughout this subsection we fix the following objects. 
\begin{itemize}
 \item Two uniformly locally finite metric spaces $(X,d)$ and $(Y,\partial)$ with the ONL and 
 an isomorphism $\Lambda\colon\roeq(X)\to\roeq(Y)$. 
 \item Strongly continuous $^*$-homomorphisms
$\Phi:\ell_\infty(X)\to \cstu(Y)$ and $\Psi:\ell_\infty(Y)\to \cstu(X)$ which witness that $\Lambda$ is liftable on diagonals. 
\item Cofinite subsets $X'\subseteq X$ and $Y'\subseteq Y$ such that $\Phi(e_{xx})$ and $\Psi(e_{yy})$ are rank 1 projections for all $x\in X'$ and $y\in Y'$, respectively. 
\end{itemize}
\end{assumption}

Also, throughout this section, given $y\in Y$, $x\in X$, $A\subseteq X$, $B\subseteq Y$, 
and $\delta>0$, we let 
\begin{align*}
X_{y,\delta}&=\{x\in X\colon \|\Psi(e_{yy})e_{xx}\|\geq \delta\}, & X_{B,\delta}&=\cup_{y\in B}X_{y,\delta}, \\
Y_{x,\delta}&=\{y\in Y\colon \|\Phi(e_{xx})e_{yy}\|\geq \delta\}, & Y_{A,\delta}&=\cup_{x\in A}Y_{x,\delta}. 
\end{align*}
 
 Before proving Theorem \ref{ThmRigidityRoeCoronaPropertyA}, we need several lemmas regarding $X_{y,\delta}$ and $Y_{x,\delta}$.

\begin{lemma}\label{LemmaONL}
For all $\eps>0$ there exists $\delta>0$ such that 
\begin{enumerate}
\item\label{ItemLemmaONL} $\|\Phi(e_{xx})\chi_{Y_{x,\delta}}\|\geq 1-\eps$, for all $x\in X'$, and
\item $\|\Psi(e_{yy})\chi_{X_{y,\delta}}\|\geq 1-\eps$, for all $y\in Y'$.
\end{enumerate}
\end{lemma}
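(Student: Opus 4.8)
The plan is to use that, under Assumption~\ref{Assumption.bijective}, $\Phi(e_{xx})$ is a rank $1$ projection for every $x\in X'$, say $\Phi(e_{xx})=\xi_x\xi_x^*$ with $\xi_x\in\ell_2(Y)$ a unit vector (and, symmetrically, $\Psi(e_{yy})=\eta_y\eta_y^*$ with $\eta_y\in\ell_2(X)$ a unit vector for $y\in Y'$). A direct computation gives $\|\Phi(e_{xx})e_{yy}\|=|\xi_x(y)|$ for all $y\in Y$, so that $Y_{x,\delta}=\{y\in Y\colon|\xi_x(y)|\geq\delta\}$, and likewise $\|\chi_B\Phi(e_{xx})\|=\|\Phi(e_{xx})\chi_B\|=\|\chi_B\xi_x\|$ for every $B\subseteq Y$. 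Hence the first assertion of the lemma says precisely that, for some $\delta$ independent of $x$, the set of coordinates on which $|\xi_x|\geq\delta$ carries all but an $\eps$-fraction of the $\ell_2$-norm of $\xi_x$; the second assertion is the symmetric statement for $\eta_y$, $y\in Y'$.

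To prove the first assertion, fix $\eps\in(0,1)$. Since $\Phi$ is coarse-like (Proposition~\ref{prop:stayinside}) and $Y$ has the ONL, I would apply Lemma~\ref{LemmaUdelta} with $a=e_{xx}$ and $B=Y$, taking $\gamma=1/2$ and (in the notation of that lemma) $\eps/2$ in place of $\eps$: since $\|\chi_Y\Phi(e_{xx})\|=\|\Phi(e_{xx})\|=1>1/2$, there is an $r>0$, not depending on $x$, and a set $D_x\subseteq Y$ with $\diam(D_x)<r$ and $\|\chi_{D_x}\xi_x\|\geq 1-\eps/2$. Since $Y$ is uniformly locally finite there is $N=N(r)<\infty$ with $|D_x|\leq N$ for all $x\in X'$. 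For any $\delta>0$ we then have
\[
\|\chi_{Y_{x,\delta}}\xi_x\|^2\ \geq\ \|\chi_{D_x}\xi_x\|^2-\|\chi_{D_x\setminus Y_{x,\delta}}\xi_x\|^2\ \geq\ (1-\eps/2)^2-N\delta^2,
\]
because $|\xi_x(y)|<\delta$ for every $y\in D_x\setminus Y_{x,\delta}$. Choosing $\delta>0$ small enough that $N\delta^2\leq (1-\eps/2)^2-(1-\eps)^2$ (a strictly positive quantity when $\eps\in(0,1)$) yields $\|\Phi(e_{xx})\chi_{Y_{x,\delta}}\|=\|\chi_{Y_{x,\delta}}\xi_x\|\geq 1-\eps$ for all $x\in X'$, which is the first assertion.

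The second assertion follows by running the same argument with $\Psi\colon\ell_\infty(Y)\to\cstu(X)$ in place of $\Phi$; this is legitimate because $\Psi$ is coarse-like by Proposition~\ref{prop:stayinside} and $X$ is uniformly locally finite with the ONL. This produces some $\delta'>0$ working for all $y\in Y'$, and since both $\|\Phi(e_{xx})\chi_{Y_{x,\delta}}\|$ and $\|\Psi(e_{yy})\chi_{X_{y,\delta}}\|$ are non-decreasing as $\delta\downarrow 0$ (the sets $Y_{x,\delta}$ and $X_{y,\delta}$ only grow), taking $\delta$ to be the minimum of the two values handles both assertions simultaneously. I do not expect any real obstacle here: the work is the bookkeeping between the given $\eps$ and the error parameter passed to Lemma~\ref{LemmaUdelta}, together with recording the elementary identities $\|\Phi(e_{xx})e_{yy}\|=|\xi_x(y)|$ and $\|\chi_B\Phi(e_{xx})\|=\|\chi_B\xi_x\|$ for rank $1$ projections; all the geometric content is already absorbed into Lemma~\ref{LemmaUdelta}.
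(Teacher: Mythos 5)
Your proof is correct and follows essentially the same route as the paper: apply Lemma~\ref{LemmaUdelta} (via ONL and coarse-likeness) to localize the rank-one vector $\xi_x$ on a set of uniformly bounded diameter, then use uniform local finiteness of $Y$ to bound the mass lost on coordinates below $\delta$, and argue symmetrically for $\Psi$. The only differences are cosmetic bookkeeping (e.g.\ $(1-\eps/2)^2$ versus $1-\eps/2$ and taking the minimum of the two $\delta$'s via monotonicity), which the paper handles the same way in substance.
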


\begin{proof}
We only prove \ref{ItemLemmaONL}. Fix $\eps>0$. Since $Y$ has ONL, by Lemma \ref{LemmaUdelta}, there exists $r>0$ such that for all $x\in X'$ there exists $C_x\subseteq Y$ with $\diam(C_x)<r$ such that $\|\Phi(e_{xx})\chi_{C_x}\|^2\geq 1-\eps/2$. For each $x\in X'$, pick a unit vector $\xi_x\in \ell_2(X)$ such that $\Phi(e_{xx})=\langle \cdot,\xi_x\rangle$. It follows that 
\[\|\Phi(e_{xx})\chi_{A}\|=\|\chi_{A}\Phi(e_{xx})\|=\|\chi_{A}\xi_x\|,\]
 for all $A\subseteq Y$. In particular, $\|\chi_{C_x}\xi_x\|^2\geq 1-\eps/2$, for all $x\in X'$. 
 
 Let $N=\sup_{y\in Y}|B_r(y)|$ and pick a positive $\delta$ smaller than $\sqrt{\eps/(2N)}$. By the definition of $Y_{x,\delta}$, for all $x\in X'$ and all $y\in Y\setminus Y_{x,\delta}$, it follows that $|\xi_x(y)|=\|\Phi(e_{xx})e_{yy}\|<\delta$. Therefore, since $|C_x|\leq N$, it follows that 
 \begin{align*}
 \|\Phi(e_{xx})\chi_{Y_{x,\delta}}\|^2&=\sum_{y\in Y_{x,\delta}}|\xi_x(y)|^2\\
 & \geq \sum_{y\in C_x\cap Y_{x,\delta}}|\xi_x(y)|^2\\
 &=\sum_{y\in C_x}|\xi_x(y)|^2-\sum_{y\in C_x\setminus Y_{x,\delta}}|\xi_x(y)|^2\\
 &\geq 1-\frac{\eps}{2}-\delta^2 N\geq 1-\eps,
 \end{align*}
 for all $x\in X'$. 
\end{proof}

Before Lemma \ref{LemmaCardinalityXBDelta}, we need a general lemma regarding the interaction between the liftings $\Phi\colon \ell_\infty(X)\to \cstu(X)$ and $\Psi\colon \ell_\infty(Y)\to \cstu(Y)$.

\begin{lemma}\label{LemmaLimInf}
Let $(a_n)_n$ and $(b_n)_n$ be bounded sequences in $\ell_\infty(X)$ and $\ell_\infty(Y)$, respectively. Furthermore, assume that both $(a_n)_n$ and $(b_n)_n$ are sequences with disjoint finite supports. Then 
\[\liminf_n\Big|\|a_n\Psi(b_n)\|-\|\Phi(a_n)b_n\|\Big|=0.\] 
\end{lemma}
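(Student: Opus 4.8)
Suppose the conclusion fails, so there is $\eta>0$ and an infinite set $M\subseteq\bbN$ with $\bigl|\,\|a_n\Psi(b_n)\|-\|\Phi(a_n)b_n\|\,\bigr|\ge\eta$ for all $n\in M$. After passing to a subsequence I may assume that all the quantities $\|a_n\Psi(b_n)\|$ and $\|\Phi(a_n)b_n\|$ converge, say to $\alpha$ and $\beta$ respectively, with $|\alpha-\beta|\ge\eta$; without loss of generality $\alpha<\beta$. The idea is to package the $a_n$'s and $b_n$'s into single bounded diagonal operators and compare a norm computed on the $X$ side with the corresponding one on the $Y$ side using that $\Lambda$ is a (diagonal-liftable) isomorphism, thereby forcing $\alpha=\beta$.

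\textbf{Key steps.} First I would use the disjointness of the supports together with Lemma~\ref{L:AA} to pass to a further subsequence (along an infinite $L\subseteq M$) so that the various cross terms are small: for $m\ne n$ in $L$,
\[
\max\Bigl(\|\Phi(a_m)b_n\|,\ \|b_m^*\Phi(a_n)\|,\ \|a_m\Psi(b_n)\|,\ \|\Psi(b_m)a_n\|\Bigr)<2^{-m-n}.
\]
This is possible because each $a_n,b_n$ has finite support and $\Phi,\Psi$ are strongly continuous, so Lemma~\ref{L:AA} applied to the (compact, since finite-support) operators $\Phi(a_m)$ and $\Psi(b_m)$ on finitely many coordinates at a time lets me thin out. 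Next set $a=\sum_{n\in L}a_n\in\ell_\infty(X)$ and $b=\sum_{n\in L}b_n\in\ell_\infty(Y)$ (bounded since the supports are disjoint and the sequences are bounded), and also $a'=a^{1/2}$, etc., if needed to handle signs; more simply, consider the element $u=\sum_{n\in L}u_n$ where $u_n$ implements the ``transfer'' from the support of $a_n$ to the support of $b_n$ — but since $\Phi,\Psi$ need not be spatial I cannot do this directly. Instead the cleaner route: because $\Phi$ lifts $\Lambda$ on $\ell_\infty(X)/c_0(X)$ and $\Psi$ lifts $\Lambda^{-1}$ on $\ell_\infty(Y)/c_0(Y)$, we get $\pi_Y(\Phi(a))=\Lambda(\pi_X(a))$ and $\pi_X(\Psi(b))=\Lambda^{-1}(\pi_Y(b))$, so $\Lambda$ relates $\pi_X(a)$-side and $\pi_Y(b)$-side computations. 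The cross-term estimates ensure that $\|\Phi(a)b\|=\limsup_n\|\Phi(a_n)b_n\|=\beta$ modulo compacts, i.e. $\|\pi_Y(\Phi(a)b)\|=\beta$, and similarly $\|\pi_X(a\Psi(b))\|=\alpha$. Applying $\Lambda$: $\|\pi_Y(\Phi(a)b)\|=\|\Lambda(\pi_X(a))\,\pi_Y(b)\|$, and $\|\pi_X(a\Psi(b))\|=\|\pi_X(a)\,\Lambda^{-1}(\pi_Y(b))\|=\|\Lambda(\pi_X(a))\,\pi_Y(b)\|$ since $\Lambda$ is isometric. Hence $\alpha=\beta$, contradicting $|\alpha-\beta|\ge\eta$.

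\textbf{The main obstacle.} The delicate point is the two ``norm equals limsup'' identifications, namely showing $\|\pi_Y(\Phi(a)b)\|=\lim_{n}\|\Phi(a_n)b_n\|$ (and the $X$-analogue). The inequality $\ge$ is immediate by compressing to the support of a single $b_n$; the reverse inequality is where the cross-term smallness is used, and here I must be careful because $\Phi(a)=\mathrm{SOT}\text{-}\sum\Phi(a_n)$ need not have the $\Phi(a_n)$ living on orthogonal subspaces of $\ell_2(Y)$ — only $b$ has orthogonal blocks. So I should instead run the argument through $b\Phi(a)b=\sum_{n,m}b_n\Phi(a_m)b_m$ (blocks indexed by $b$'s orthogonal supports on both sides), use $\|b_n\Phi(a_m)b_m\|\le\|\Phi(a_m)b_m\|$ for $m=n$ and $<2^{-m-n}$ for $m\ne n$ (absorbing the off-block terms into a compact), conclude $\|\pi_Y(b\Phi(a)b)\|=\limsup_n\|b_n\Phi(a_n)b_n\|$, and finally relate $\|b_n\Phi(a_n)b_n\|$ to $\|\Phi(a_n)b_n\|$ using that $b_n$ is a projection-like contraction (if the $b_n$ are merely contractions rather than projections, replace $b_n$ by $|b_n|$ or pass to the polar decomposition, or first reduce the lemma to the case of $\{0,1\}$-valued $a_n,b_n$ by a density argument, since that is all that is needed in the applications). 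With that bookkeeping done, applying $\Lambda$ and its isometry property closes the argument.
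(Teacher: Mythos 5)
Your overall route is exactly the paper's: thin out with Lemma~\ref{L:AA} so that the cross terms $a_n\Psi(b_m)$ and $\Phi(a_n)b_m$ ($n\neq m$) are summably small, form $a=\sum_n a_n$ and $b=\sum_n b_n$, identify $\|\pi_X(a\Psi(b))\|$ and $\|\pi_Y(\Phi(a)b)\|$ with the limits of $\|a_n\Psi(b_n)\|$ and $\|\Phi(a_n)b_n\|$ along a subsequence, and then use that $\Lambda$ is a multiplicative isometry together with $\pi_Y(\Phi(a))=\Lambda(\pi_X(a))$ and $\pi_X(\Psi(b))=\Lambda^{-1}(\pi_Y(b))$; the paper runs this directly rather than by contradiction, which is cosmetic.

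The problem is your ``main obstacle'' paragraph. The detour through $b\Phi(a)b$ does not work: there is no lower bound for $\|b_n\Phi(a_n)b_n\|$ in terms of $\|\Phi(a_n)b_n\|$, even when the $b_n$ are projections (e.g.\ $\Phi(a_n)$ may map the range of $b_n$ into its orthogonal complement, making the compression zero while $\Phi(a_n)b_n$ has norm one), and neither polar decomposition nor reducing to $\{0,1\}$-valued $b_n$ repairs this. Fortunately the obstacle you were trying to dodge is illusory: since $\Phi$ is a $^*$-homomorphism and the $a_n$ have disjoint supports, $\Phi(a_n)^*\Phi(a_m)=\Phi(a_n^*a_m)=0$ for $n\neq m$, so the operators $S_n=\Phi(a_n)b_n$ satisfy both $S_n^*S_m=0$ (orthogonal ranges, from the $\Phi(a_n)$'s) and $S_nS_m^*=0$ (orthogonal initial spaces, from $b_nb_m^*=0$); moreover each $S_n$ is finite rank because $b_n$ has finite support. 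Hence $\|\pi_Y(\sum_n S_n)\|=\limsup_n\|S_n\|$, and the Lemma~\ref{L:AA} cross-term estimates are needed only to see that $\Phi(a)b-\sum_n S_n$ is compact. The same argument, with $\Psi(b_n)\Psi(b_m)^*=\Psi(b_nb_m^*)=0$ and $a_n$ finite rank, handles $\sum_n a_n\Psi(b_n)$ on the $X$ side. With the detour deleted and the direct identification justified this way, your proof coincides with the paper's.
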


\begin{proof}
Since both $(a_n)_n$ and $(b_n)_n$ are bounded sequences with disjoint supports, $\sum_{n\in M}a_n$ and $\sum_{n\in M}b_n$ converges in the strong operator topology, for all $M\subset \N$. Since $\Phi$ and $\Psi$ are strongly continuous $^*$-homomorphisms, and since both $(a_n)_n$ and $(b_n)_n$ are sequences with finite disjoint supports, Lemma \ref{L:AA} implies that, by going to a subsequence, we can assume that $\|a_n\Psi(b_m)\|\leq 2^{-n-m}$ and $\|\Phi(a_n)b_m\|\leq 2^{-n-m}$ for all $n\neq m$, \[\Big\|\pi_X\Big(\sum_na_n\Psi(b_n)\Big)\Big\|=\limsup_n\|a_n\Psi(b_n)\|\]
 and \[\Big\|\pi_X\Big(\sum_n\Phi(a_n)b_n\Big)\Big\|=\limsup_n\|\Phi(a_n)b_n\|.\] 
In particular, \[\pi_X\Big(\Big(\sum_{n\in M}a_n\Big)\Big(\sum_{n\in M}\Psi(b_n)\Big)\Big)=\pi_X\Big(\sum_{n\in M}a_n\Psi(b_n)\Big)\] and \[\pi_Y\Big(\Big(\sum_{n\in M}\Phi(a_n)\Big)\Big(\sum_{n\in M}b_n\Big)\Big)=\pi_Y\Big(\sum_{n\in M}\Phi(a_n)b_n\Big).\] By going to a further subsequence, we can assume that $\lim_n\|a_n\Psi(b_n)\|$ and $\lim_n\|\Phi(a_n)b_n\|$ exist. Therefore, it follows that 
\begin{align*}
\Big\|\pi_X\Big(\sum_na_n\Psi(b_n)\Big)\Big\|&=\Big\|\Lambda\circ\pi_X\Big(\sum_na_n\Psi(b_n)\Big)\Big\|\\
&=\Big\|\Lambda\circ\pi_X\Big(\sum_na_n\Big)\Lambda\circ\pi_X\Big(\sum_n\Psi(b_n)\Big)\Big\|\\
&=\Big\|\pi_Y\Big(\sum_n\Phi(a_n)\Big)\pi_Y\Big(\sum_nb_n\Big)\Big\|\\
&=\Big\|\pi_Y\Big(\sum_n\Phi(a_n)b_n\Big)\Big\|,
\end{align*}
and we are done.
\end{proof}

\begin{lemma}\label{LemmaCardinalityXBDelta}
For all $\eps>0$ there exists $\delta>0$ such that 
\[\|\Psi(\chi_B)(1-\chi_{X_{B,\delta}})\|<\eps\text{ and }\|\Phi(\chi_A)(1-\chi_{Y_{A,\delta}})\|<\eps,\]
for all finite subsets $B\subseteq Y'$ and $A\subseteq X'$. In particular, if $\eps\in (0,1)$, then $|B|\leq |X_{B,\delta}|$ and $|A|\leq |Y_{A,\delta}|$ for all $B\subseteq Y'$ and $A\subseteq X'$. 
\end{lemma}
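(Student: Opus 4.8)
The plan is to prove Lemma~\ref{LemmaCardinalityXBDelta} by a compactness/diagonalization argument that reduces the ``finite set'' statement to a statement about a single infinite collection of disjoint finite sets, which can then be contradicted using Lemma~\ref{LemmaLimInf} together with Lemma~\ref{LemmaONL} and the coarse-likeness of $\Phi$ and $\Psi$ (Proposition~\ref{prop:stayinside}). We prove only the first inequality, $\|\Psi(\chi_B)(1-\chi_{X_{B,\delta}})\|<\eps$; the second is symmetric. First I would fix $\eps>0$ and apply Lemma~\ref{LemmaONL} (and shrink if needed) to obtain a $\delta_0>0$ so that $\|\Psi(e_{yy})\chi_{X_{y,\delta_0}}\|\geq 1-\eps/2$ for all $y\in Y'$; the candidate $\delta$ in the conclusion will be chosen below, strictly smaller than $\delta_0$.

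Next I would argue by contradiction: suppose that for every $\delta>0$ there is a finite $B_\delta\subseteq Y'$ with $\|\Psi(\chi_{B_\delta})(1-\chi_{X_{B_\delta,\delta}})\|\geq\eps$. Picking $\delta=2^{-k}$ gives finite sets $B_k\subseteq Y'$ and, since $\Psi(\chi_{B_k})(1-\chi_{X_{B_k,2^{-k}}})$ is a fixed operator with $\lim_n\Psi(\chi_{B_k}\chi_{Y_n})=\Psi(\chi_{B_k})$ strongly for any exhaustion $(Y_n)_n$, one can find a finite $C_k\subseteq B_k$ and a unit vector $\xi_k\in\ell_2(X)$ supported off $X_{B_k,2^{-k}}$ witnessing $\|\Psi(\chi_{C_k})\xi_k\|\geq\eps/2$, i.e. $\|\chi_{X\setminus X_{B_k,2^{-k}}}\Psi(\chi_{C_k})\|\geq\eps/2$. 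Using Lemma~\ref{L:AA} applied to compact truncations of the $\Psi(\chi_{C_k})$, and passing to a subsequence, I would arrange that the sets $C_k$ are pairwise disjoint, the vectors $\xi_k$ have pairwise disjoint finite supports, and the cross terms $\|\chi_{\supp\xi_k}\Psi(\chi_{C_j})\|$ decay like $2^{-j-k}$ for $j\neq k$. The point of the decay is that the orthogonal sums $\sum_k\chi_{C_k}$ (in $\ell_\infty(Y)$) and $\sum_k\langle\cdot,\xi_k\rangle\xi_k$ (a projection in $\cB(\ell_2(X))$) are well-behaved under $\Psi$, $\pi_X$, and $\Lambda$.

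Then I would derive the contradiction. On one hand, by construction $\xi_k\perp X_{C_k,2^{-k}}\supseteq\bigcup_{y\in C_k}X_{y,2^{-k}}$, so for each $y\in C_k$ the vector $\xi_k$ is nearly orthogonal to the part of $\Psi(e_{yy})$ living on $X_{y,2^{-k}}$, while Lemma~\ref{LemmaONL} says that $\Psi(e_{yy})$ is $\eps/2$-concentrated on $X_{y,\delta_0}$; choosing $\delta<\delta_0$ — in fact it suffices to take $2^{-k}$ eventually below $\delta_0$, so we just keep $k$ large — forces $\|\Psi(e_{yy})\xi_k\|$ small uniformly in $y\in C_k$, hence $\|\Psi(\chi_{C_k})\xi_k\|$ small, contradicting $\|\Psi(\chi_{C_k})\xi_k\|\geq\eps/2$. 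The slightly delicate bookkeeping is that $\|\Psi(\chi_{C_k})\xi_k\|^2=\sum_{y\in C_k}\|\Psi(e_{yy})\xi_k\|^2$ requires the ranges of the $\Psi(e_{yy})$, $y\in C_k$, to be essentially orthogonal, which holds up to a summable error by the same Lemma~\ref{L:AA}-type estimate; this is where I expect the main obstacle to lie, namely in keeping all these ``almost orthogonality'' errors summable simultaneously while passing to subsequences. Once the first inequality is established, the ``in particular'' clause is immediate: if $\eps<1$ and $|B|>|X_{B,\delta}|$ for some finite $B\subseteq Y'$, then since $\Psi$ is a strongly continuous $^*$-homomorphism, $\Psi(\chi_B)$ is a projection of rank $|B|$ (as $\Psi$ restricted to the finite-dimensional algebra $\ell_\infty(B)$ is a unital embedding into $\chi_{?}\cB(\ell_2(X))\chi_{?}$, hence rank-preserving on projections for cofinite $B$ — here we use $B\subseteq Y'$), while $\chi_{X_{B,\delta}}\Psi(\chi_B)\chi_{X_{B,\delta}}$ has rank at most $|X_{B,\delta}|<|B|$, so $\|\Psi(\chi_B)(1-\chi_{X_{B,\delta}})\|=\|\Psi(\chi_B)-\Psi(\chi_B)\chi_{X_{B,\delta}}\|\geq 1$ by the standard fact that a projection which is not dominated by $\chi_{X_{B,\delta}}$ has distance $1$ from the corner, contradicting the inequality just proved. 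The argument for $\|\Phi(\chi_A)(1-\chi_{Y_{A,\delta}})\|<\eps$ and $|A|\leq|Y_{A,\delta}|$ is identical with the roles of $X,Y,\Phi,\Psi,\Lambda,\Lambda^{-1}$ interchanged.
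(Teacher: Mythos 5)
There is a genuine gap at the final step of your contradiction. You argue: each $\|\Psi(e_{yy})\xi_k\|$ is small uniformly in $y\in C_k$ (because $\xi_k$ is supported off $X_{y,2^{-k}}\supseteq X_{y,\delta_0}$ and $\Psi(e_{yy})$ is concentrated on $X_{y,\delta_0}$), ``hence $\|\Psi(\chi_{C_k})\xi_k\|$ is small.'' That inference fails. Since $\Psi$ is a $^*$-homomorphism the ranges of the $\Psi(e_{yy})$, $y\in C_k$, are \emph{exactly} orthogonal (so the almost-orthogonality bookkeeping you worry about is a non-issue), and one has precisely
\[
\|\Psi(\chi_{C_k})\xi_k\|^2=\sum_{y\in C_k}\|\Psi(e_{yy})\xi_k\|^2\leq |C_k|\cdot\max_{y\in C_k}\|\Psi(e_{yy})\xi_k\|^2 .
\]
But the finite sets $B_k$ (hence $C_k$) produced by negating the lemma have unbounded cardinality, and the per-term bound coming from Lemma~\ref{LemmaONL} is fixed once $\delta_0$ (or $\theta$) is fixed, before $k$ is chosen. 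So many small terms can still sum to something of size $\eps^2/4$, and no contradiction follows. Nothing in your argument controls $|C_k|$, and indeed nothing in it uses the ONL hypothesis, which is exactly what is needed for that control.

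This is where the paper's proof differs in an essential way: it first transfers the estimate from the $\Psi$-side to the $\Phi$-side via Lemma~\ref{LemmaLimInf} (using $\Lambda$), then applies Lemma~\ref{LemmaUdelta} (ONL of $Y$) to localize $\chi_{B_n}\Phi(\chi_{A_n\cap X_n})$ onto a subset $B_n\cap D_n$ with $\diam(D_n)\leq r$, hence $|D_n|\leq N$ by uniform local finiteness, and transfers back with Lemma~\ref{LemmaLimInf}. Only then does it sum the per-point estimates from Lemma~\ref{LemmaONL}, over at most $N$ terms, with $\theta$ chosen \emph{after} $N$ so that $4N\theta^{1/2}$ is small. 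You cite Lemma~\ref{LemmaLimInf} in your plan but never use it (nor Lemma~\ref{LemmaUdelta}) in the actual derivation; without this localization step the argument collapses. The ``in particular'' clause at the end is essentially fine (the cleaner statement is that $\Psi(\chi_B)$ has rank $|B|$ because the $\Psi(e_{yy})$, $y\in Y'$, are mutually orthogonal rank-one projections, so if $|B|>|X_{B,\delta}|$ the range of $\Psi(\chi_B)$ meets $\ker\chi_{X_{B,\delta}}$ and the norm is $1$), but it depends on the first inequality, which your argument does not establish.
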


\begin{proof}We first show the statement for $\Psi$ as above. Suppose it fails. Then there exists $\eps\in (0, 1/2)$ and a sequence $(B_n)_n$ of finite subsets of $Y'$ such that $\|\Psi(\chi_{B_n})(1-\chi_{X_{B_n,1/n}})\|\geq 2\eps$, for all $n\in\N$.

\begin{claim}\label{C:7.7}
For every $n\in\N$ and every finite $F\subseteq Y $ there exists a finite $B\subseteq Y\setminus F$ such that $\|\Psi(\chi_{B} )(1-\chi_{X_{B,1/n}})\|> \eps$.
\end{claim}

\begin{proof}
If not, then there are $n\in\N$ and a finite $F\subseteq Y$ such that 
for every finite $B\subseteq Y\setminus F$ we have $\|\Psi(\chi_{B} )(1-\chi_{X_{B,1/n}})\|\leq \eps$. This implies $\|\Psi(\chi_{B} )(1-\chi_{X_{B,1/m}})\|\leq \eps$ for all finite $B\subseteq Y\setminus F$ and all $m>n$. Since $F$ is finite, pick $m>n$ large enough to satisfy  $\|\Psi(\chi_{D} )(1-\chi_{X_{D,1/m}})\|< \eps$ for all $D\subseteq F$. This implies that 
\begin{align*}
\|\Psi(\chi_{B_m})(1-\chi_{X_{B_m,1/m}})\|&\leq \|\Psi(\chi_{B_m\cap F})(1-\chi_{X_{B_m,1/m}})\|\\
&\ \ \ \ +\|\Psi(\chi_{B_m\setminus F})(1-\chi_{X_{B_m,1/m}})\|\\
&\leq \|\Psi(\chi_{B_m\cap F})(1-\chi_{X_{B_m\cap F,1/m}})\|\\
&\ \ \ \ +\|\Psi(\chi_{B_m\setminus F})(1-\chi_{X_{B_m\setminus F,1/m}})\|\\
&< 2\eps;
\end{align*}
contradiction.
\end{proof}

By Claim~\ref{C:7.7}, redefining the sequence $(B_n)_n$, we assume that $(B_n)_n$ is a sequence of disjoint finite subsets of $Y'$ such that $\|\Psi(\chi_{B_n})(1-\chi_{X_{B_n,1/n}})\|> \eps$, for all $n\in\N$. For each $n\in\N$, let $A_n=X\setminus X_{B_n,\delta}$. 

Since $(B_n)_n$ is a disjoint sequence of finite subsets and $\|\Psi(\chi_{B_n})\chi_{A_n}\|>\eps$ for all $n$, Lemma \ref{L:AA} allows us to pick a sequence $(X_n)_n$ of disjoint finite subsets of $X$ such that $\|\Psi(\chi_{B_n})\chi_{A_n\cap	 X_n}\|>\eps/2$ for all $n\in\N$. By Lemma \ref{LemmaLimInf}, it follows that
\[\liminf_n\Big|\|\Psi(\chi_{B_n})\chi_{A_n\cap X_n}\|-\|\chi_{B_n}\Phi(\chi_{A_n\cap X_n})\|\Big|=0.\]
Therefore, by going to a subsequence, we assume that 
\[
\inf_n\|\chi_{B_n}\Phi(\chi_{A_n\cap X_n})\|\geq \eps/4.
\]
 By Lemma~\ref{LemmaUdelta}, there exists $r>0$ and a sequence $(D_n)_n$ of subsets of $Y$ such that $\diam(D_n)<r$ and 
\begin{equation*}
\|\chi_{B_n\cap D_n}\Phi(\chi_{A_n\cap X_n})\|\geq (1-\eps)\|\chi_{B_n}\Phi(\chi_{A_n\cap X_n})\|,
\end{equation*}
for all $n\in\N$. Using $\|\Psi(\chi_{B_n})\chi_{A_n\cap X_n}\|>\eps/2$ for all $n\in\N$ and applying Lemma~\ref{LemmaLimInf} once again, by going to a further subsequence, we can assume that 
\begin{equation}\label{EqONL1}
\|\Psi(\chi_{B_n\cap D_n})\chi_{A_n\cap X_n}\|\geq (1-2\eps)\|\Psi(\chi_{B_n})\chi_{A_n\cap X_n}\|,\tag{$*$}
\end{equation}
for all $n\in\N$.

Since $Y$ is uniformly locally finite and $\sup_n\diam (D_n)\leq r$, there exists $N\in\N$ such that $\sup_n|D_n|<N$. Pick $\theta>0$ small enough to have  $4N\theta^{1/2} <\eps(1-2\eps) $.
By Lemma \ref{LemmaONL}, pick $n\in \N$ large enough to satisfy $\|\Psi(e_{yy})\chi_{X_{y,1/n}}\|\geq 1-\theta$ for all $y\in Y'$. Since, for all $y\in Y'$, $\Psi(e_{yy})$ is a rank 1 projection, then $\Psi(e_{yy})\chi_{X_{y,1/n}}\Psi(e_{yy})=\lambda\Psi(e_{yy})$ for some $\lambda\geq (1-\theta)^2$, and therefore $\|\Psi(e_{yy})(1-\chi_{X_{y,1/n}})\|<2\theta^{1/2}$ for all $y\in Y'$. It follows that 

\begin{align*}\label{EqONL3}
\|\Psi(\chi_{B_n\cap D_n })\chi_{A_n\cap X_n}\|&\leq\|\Psi(\chi_{B_n\cap D_n })\chi_{A_n}\|\tag{$**$}\\ 
&\leq \sum_{y\in B_n\cap D_n}\|\Psi(e_{yy})(1-\chi_{X_{y,1/n}})\|\\
&\leq 4\theta^{1/2}|D_n|\\
&\leq \frac{\eps(1-2\eps)}{2},
\end{align*}
for all $n\in\N$. Therefore, inequalities \eqref{EqONL1} and \eqref{EqONL3} imply that
 \[\|\Psi(\chi_{B_n})\chi_{A_n\cap X_n}\|\leq \frac{\eps}{2}\] for all $n\in\N$; contradiction.

We are left to show that, if $\eps\in (0,1)$ then $|B|\leq |X_{B,\delta}|$ for all $B\subseteq Y'$. Fix $\delta>0$ given by the first statement of the lemma for any $\eps\in(0,1)$. Notice that $|B|=\rank \Phi(\chi_B)$ and $|X_{B,\delta}|=\rank \chi_{X_{B,\delta}}$. Suppose $\rank \Phi(\chi_B)>\rank \chi_{X_{B,\delta}}$. Then, since $\mathrm{corank}(1-\chi_{X_{B,\delta}})=\rank \chi_{X_{B,\delta}} $, the images of the projections $1-\chi_{X_{B,\delta}}$ and $\Phi(\chi_B)$ have non-empty intersection. Hence, $\|\Phi(\chi_B)( 1-\chi_{X_{B,\delta}})\|=1$; contradiction.
\end{proof}

\begin{lemma}\label{LemmaExistenceInjection}
There exists $\delta>0$, an injection $g\colon Y'\to X$ and an injection $f\colon X'\to Y$ such that $g(y)\in X_{y,\delta}$ and $f(y)\in Y_{x,\delta}$ for all $y\in Y'$ and all $x\in X'$. 
\end{lemma}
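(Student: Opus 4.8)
The statement asserts the existence of a single $\delta>0$ and \emph{injective} maps $f$ and $g$ witnessing the coincidence of diagonal projections. The natural approach is: first use Lemma~\ref{LemmaCardinalityXBDelta} to get a Hall-type counting condition, then apply the (infinite) marriage theorem to produce injections, and finally use the uniform local finiteness of $X$ and $Y$ to see that a single $\delta$ works for a locally finite bipartite graph so that König/Hall applies in the locally finite setting.

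\textbf{Step 1: Fix $\delta$ and build a bipartite graph.} First I would apply Lemma~\ref{LemmaCardinalityXBDelta} with some fixed $\eps\in(0,1)$ to obtain a $\delta>0$ such that $|B|\le |X_{B,\delta}|$ for all finite $B\subseteq Y'$ and $|A|\le|Y_{A,\delta}|$ for all finite $A\subseteq X'$. Consider the bipartite graph $G$ on $Y'\sqcup X$ with an edge between $y\in Y'$ and $x\in X$ precisely when $x\in X_{y,\delta}$, i.e. when $\|\Psi(e_{yy})e_{xx}\|\ge\delta$. The neighbourhood of a finite $B\subseteq Y'$ in $G$ is exactly $X_{B,\delta}$, so the inequality from Lemma~\ref{LemmaCardinalityXBDelta} is precisely Hall's condition $|N(B)|\ge|B|$ for all finite $B$. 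Symmetrically, the graph $G'$ on $X'\sqcup Y$ with $x\sim y$ iff $y\in Y_{x,\delta}$ satisfies Hall's condition for finite subsets of $X'$.

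\textbf{Step 2: Local finiteness and the marriage theorem.} To invoke the Hall--Rado marriage theorem for a (possibly infinite) bipartite graph, one needs the graph to be \emph{locally finite} on the side from which one wants the injection (each $y\in Y'$ has finitely many neighbours), since Hall's condition for finite sets alone does not suffice for infinite graphs in general, but does suffice when every vertex on the ``source'' side has finite degree. So I would check that each $X_{y,\delta}$ is finite: indeed $\Psi(e_{yy})$ is a rank-one projection with unit vector $\eta_y\in\ell_2(X)$, and $\|\Psi(e_{yy})e_{xx}\|=|\eta_y(x)|$, so $X_{y,\delta}=\{x:|\eta_y(x)|\ge\delta\}$ is finite because $\eta_y\in\ell_2(X)$. (Alternatively one can use uniform local finiteness together with the fact from Proposition~\ref{prop:stayinside}/coarse-likeness that $\sup_y\diam(X_{y,\delta})<\infty$, giving a uniform bound on degrees; either way the graph is locally finite on the $Y'$ side.) With Hall's condition verified for all finite $B$ and each $y\in Y'$ of finite degree, the marriage theorem yields an injection $g\colon Y'\to X$ with $g(y)\in X_{y,\delta}$ for all $y$. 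Symmetrically, since each $Y_{x,\delta}=\{y:|\xi_x(y)|\ge\delta\}$ is finite (with $\xi_x$ the unit vector of the rank-one projection $\Phi(e_{xx})$, $x\in X'$), the marriage theorem applied to $G'$ yields an injection $f\colon X'\to Y$ with $f(x)\in Y_{x,\delta}$ for all $x\in X'$.

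\textbf{Main obstacle.} The only delicate point is the passage from ``Hall's condition on finite sets'' to an actual injection in the infinite setting; this is exactly where one must invoke local finiteness of the relevant side of the bipartite graph (equivalently, $X_{y,\delta}$ and $Y_{x,\delta}$ being finite) so that the Hall--Rado theorem applies. Everything else is bookkeeping: Lemma~\ref{LemmaCardinalityXBDelta} does the real analytic work of producing the counting inequality, and the finiteness of $X_{y,\delta}$, $Y_{x,\delta}$ is immediate from $\eta_y,\xi_x\in\ell_2$. One may need a single $\delta$ that works for both $f$ and $g$ simultaneously; since Lemma~\ref{LemmaCardinalityXBDelta} delivers both inequalities for the same $\delta$, taking that $\delta$ (shrinking if necessary so that both $G$ and $G'$ arguments apply) suffices.
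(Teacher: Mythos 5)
Your proposal is correct and follows essentially the same route as the paper: invoke Lemma~\ref{LemmaCardinalityXBDelta} to get the Hall condition $|B|\le|X_{B,\delta}|$ on finite sets and then apply the marriage theorem. Your extra care in verifying that each $X_{y,\delta}$ is finite (via the rank-one vectors being in $\ell_2$), which is what licenses the infinite Hall--Rado theorem, is a detail the paper leaves implicit but is a genuine and correctly handled point.
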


\begin{proof}
By symmetry, we only show the existence of $g\colon Y'\to X$. Let $\delta$ be given by Lemma \ref{LemmaCardinalityXBDelta} for some $\eps\in (0,1)$. Define a map $\alpha\colon Y'\to \cP(X)$ by letting $\alpha(y)=X_{y,\delta}$ for all $y\in Y'$. Since $X_{B,\delta}=\cup_{y\in B}\alpha(y)$, the choice of $\delta$ gives that
\[|B|\leq |X_{B,\delta}|=\Big|\bigcup_{y\in B}\alpha(y)\Big|.\]
Therefore, by Hall's marriage theorem, the required injection exists.
\end{proof}

\begin{proof}[Proof of Theorem \ref{ThmRigidityRoeCoronaPropertyA}]
Assume \eqref{ItemLiftIso}, i.e., $X$ and $Y$ are uniformly locally finite metric spaces such that $X$ has property A, and $\roeq(X)$ and $\roeq(Y)$ are liftable on the diagonals isomorphic. Since a uniformly locally finite metric space has property A if and only if its uniform Roe algebra is nuclear (\cite[Theorem 5.3]{SkandalisTuYu2002}), $\cstu(X)$ is nulcear and so is $\roeq(X)$. Therefore, $\roeq(Y)$ is nuclear and since $\cK(\ell_2(Y))$ is nuclear, so is $\cstu(Y)$. So, $Y$ has property A. By \cite[Theorem 4.1]{Sako2014}, $X$ and $Y$ have ONL. Hence, by Propositions \ref{PropThankGodForNoNoncompactGhostProj} and \ref{X:PropCofiniteSubRank1.General} we have $\Phi$, $\Psi$, $X'$, and $Y'$ as in Assumption~\ref{Assumption.bijective} (we use here that $(\Phi(e_{xx}))_{x\in X}$ and $(\Psi(e_{yy}))_{y\in Y}$ are non-zero finite rank projections for cofinitely many elements).

Let $f\colon X'\to Y$ and $g\colon Y'\to X$ be the injections given by Lemma \ref{LemmaExistenceInjection}. By Lemma \ref{LemmaDefiMaps0} there exist cofinite subsets $X''\subseteq X'$ and $Y''\subseteq Y'$ such that 
\[\inf_{x\in X''}\|\Psi(e_{f(x)f(x)})e_{xx}\|>0\text{ and }\inf_{y\in Y''}\|\Phi(e_{g(y)g(y)})e_{yy}\|>0.\]
Since $f$ and $g$ are injective, $f^{-1}(Y\setminus Y'')$ and $g^{-1}(X\setminus X'')$ are finite. Hence, K\"onig's proof of the Cantor--Schr\"oder--Bernstein theorem gives us cofinite subsets $\tilde X\subseteq X$, $\tilde Y\subseteq Y$, and a bijection $h\colon \tilde X\to \tilde Y$ such that for each $x\in \tilde X$, either $x\in X''$ and $h(x)=f(x)$ or $x\in \mathrm{Im}(g)$ and $h(x)=g^{-1}(x)$. This implies that 
\[\|\Phi(e_{xx})e_{h(x)h(x)}\|\geq \delta\text{ and }\|\Psi(e_{yy})e_{h^{-1}(y)h^{-1}(y)}\|\geq \delta\]
for all $x\in \tilde X$ and all $y\in \tilde Y $. By Lemma \ref{LemmaTheMapsAreCoarse}, both $h$ and $h^{-1}$ are coarse. So $h$ is a coarse equivalence, and \eqref{ItemEquivv} holds.

Say \eqref{ItemEquivv} holds. Let $X'\subset X$ and $Y'\subset Y$ be cofinite subsets and $f:X'\to Y'$ be a bijective coarse equivalence. Define a linear map $U:\ell_2(X')\to \ell_2(Y')$ by letting $U\delta_x=\delta_{f(x)}$ for all $x\in X'$. Since $f$ is a coarse equivalence, it follows that the map $a\in \cstu(X')\mapsto UaU^*\in\cstu(Y')$ is an isomorphism (cf. \cite[Theorem 8.1]{BragaFarah2018Trans}). Therefore, since $X'\subset X$ and $Y'\subset Y$ are cofinite, this induces an isomorphism between $\roeq(X)$ and $\roeq(Y)$, and \eqref{ItemLiftIso} holds.
\end{proof}

\section{Independence results}\label{sec:counterexamples}
%
%
%


In this section we present two independence results, Theorem~\ref{T.Independent+} and Example~\ref{Ex:RudinShelah}. 
The results of \cite{Gha:FDD} and \cite{Ghasemi.FFV} imply the existence of countable discrete metric spaces $X$ and $Y$ such that the assertion $\roeq(X)\cong \roeq(Y)$ is independent from ZFC. 
Theorem \ref{T.Independent} is an immediate consequence of the following theorem.

\begin{theorem}\label{T.Independent+}
There exists a family $\cG$ of continuum many locally finite metric spaces such that the following holds. 
\begin{enumerate}
\item The 
Continuum Hypothesis implies $\roeq(X)\cong \roeq(Y)$ for all $X$ and $Y$ in $\cG$. 
\item $\OCA+\MA_{\aleph_1}$ implies $\roeq(X)\not\cong \roeq(Y)$ for all distinct $X$ and $Y$ in $\cG$. 
\end{enumerate}
\end{theorem}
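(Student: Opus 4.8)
The plan is to realize every $\roeq(X)$ for $X\in\cG$ as a quotient of a full FDD-algebra $\prod_nM_{k(n)}(\bbC)$ by its $c_0$-part $\bigoplus_nM_{k(n)}(\bbC)$, and then to quote the two theorems of S.~Ghasemi: \cite[Theorem~1.2]{Ghasemi.FFV} (a metric Feferman--Vaught theorem, together with countable saturation of FDD-quotients) for the $\CH$ side, and \cite{Gha:FDD} (triviality of isomorphisms) for the $\OCA+\MA_{\aleph_1}$ side.

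First I would set up the family. Fix an almost disjoint family $\{S_\alpha:\alpha<2^{\aleph_0}\}$ of infinite subsets of $\bbN$, and for each $\alpha$ let $(k_\alpha(n))_n$ be the strictly increasing enumeration of $\{2^n:n\geq 1\}\cup\{3\cdot 2^n:n\in S_\alpha\}$; then $k_\alpha(n)\to\infty$, each value is attained at most once, and for $\alpha\neq\beta$ the ranges of $k_\alpha$ and $k_\beta$ have infinite symmetric difference, so there is no bijection between cofinite subsets of $\bbN$ matching $k_\alpha$ with $k_\beta$ (and no coordinate mergings can occur, the values being multiplicatively spread out). Now let $X_\alpha=\bigsqcup_n C^\alpha_n$ where $C^\alpha_n$ has $k_\alpha(n)$ points, all intra-$C^\alpha_n$ distances equal $1$, and $d(x,y)=n+m$ whenever $x\in C^\alpha_n$, $y\in C^\alpha_m$ with $n\neq m$. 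One checks the triangle inequality directly, that $X_\alpha$ is locally finite but not uniformly locally finite (since $k_\alpha(n)\to\infty$; indeed $\sum_n p_n$, with $p_n$ the rank-one projection onto the constants of $\ell_2(C^\alpha_n)$, is a noncompact ghost projection), and that $d(C^\alpha_n,C^\alpha_m)\to\infty$ as $n+m\to\infty$, i.e. $X_\alpha$ is sparse. Put $\cG=\{X_\alpha:\alpha<2^{\aleph_0}\}$, a family of size continuum.

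Next I would identify the uniform Roe coronas. Since any two points of a common $C^\alpha_n$ are at distance $1$, the block-diagonal algebra $\prod_n\cB(\ell_2(C^\alpha_n))=\prod_nM_{k_\alpha(n)}(\bbC)$ consists of operators of propagation $\leq 1$, hence is contained in $\cstu[X_\alpha]$, and $\cK(\ell_2(X_\alpha))\subseteq\cstu(X_\alpha)$ always holds. Conversely, if $\propg(a)\leq r$ then the only nonzero off-block entries of $a$ lie between the finitely many pairs $(C^\alpha_n,C^\alpha_m)$ with $n+m\leq r$, so $a$ is the sum of its block-diagonal compression $\sum_n\chi_{C^\alpha_n}a\chi_{C^\alpha_n}\in\prod_nM_{k_\alpha(n)}(\bbC)$ and a finite-rank operator. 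As $b\mapsto\sum_n\chi_{C^\alpha_n}b\chi_{C^\alpha_n}$ is norm-one and $\prod_nM_{k_\alpha(n)}(\bbC)$ is norm-closed, taking closures gives
\[
\cstu(X_\alpha)=\prod_nM_{k_\alpha(n)}(\bbC)+\cK(\ell_2(X_\alpha)),
\]
and intersecting the FDD-algebra with the compacts picks out exactly its $c_0$-part, so
\[
\roeq(X_\alpha)\cong\prod_nM_{k_\alpha(n)}(\bbC)\big/\bigoplus_nM_{k_\alpha(n)}(\bbC).
\]
Now, since $k_\alpha(n)\to\infty$ for every $\alpha$, \cite[Theorem~1.2]{Ghasemi.FFV} shows all these reduced products have the same continuous first-order theory; being countably saturated, under $\CH$ any two of them, of density character $\aleph_1$, are isomorphic by the usual back-and-forth argument, proving (1). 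For (2), \cite{Gha:FDD} shows that under $\OCA+\MA_{\aleph_1}$ every isomorphism $\prod_nM_{k(n)}(\bbC)/\bigoplus\to\prod_nM_{l(n)}(\bbC)/\bigoplus$ is induced on a cofinite set of coordinates by isomorphisms $M_{k(n)}(\bbC)\to M_{l(\sigma(n))}(\bbC)$ for some bijection $\sigma$ between cofinite subsets of $\bbN$; in particular the two sequences are almost permutations of each other. By the choice of the $k_\alpha$ this fails for $\alpha\neq\beta$, so $\roeq(X_\alpha)\not\cong\roeq(X_\beta)$.

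The main obstacle is the interface in the previous paragraph: I need a continuum-sized family of clique-size sequences that all diverge to $\infty$ (so the $\CH$ side collapses them to a single isomorphism type, and so the $X_\alpha$ are genuinely not uniformly locally finite, in line with the remark after Theorem~\ref{T.Independent}), yet are pairwise inequivalent for the a priori finer "triviality" notion governing isomorphisms in \cite{Gha:FDD} — which is exactly why I take the values to be an injectively enumerated, multiplicatively sparse set tied to an almost disjoint family, leaving no room for coordinate mergings or divisibility coincidences. The remaining work — the metric axioms and local finiteness of $X_\alpha$, its sparseness, and the displayed computation of $\cstu(X_\alpha)$ — is routine.
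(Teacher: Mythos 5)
Your metric-space construction and the identification $\roeq(X_\alpha)\cong\prod_nM_{k_\alpha(n)}(\bbC)/\bigoplus_nM_{k_\alpha(n)}(\bbC)$ are essentially the paper's Lemma~\ref{L.X(g)} (cliques of diverging size placed at mutually diverging distances), and your $\OCA+\MA_{\aleph_1}$ direction runs along the same lines as the paper's. The genuine gap is in the $\CH$ direction, at the sentence ``since $k_\alpha(n)\to\infty$ for every $\alpha$, \cite[Theorem~1.2]{Ghasemi.FFV} shows all these reduced products have the same continuous first-order theory.'' That is not what Ghasemi's theorem says, and it is not known for arbitrary diverging dimension sequences: elementary equivalence of $\prod_nM_{k(n)}(\bbC)/\bigoplus_nM_{k(n)}(\bbC)$ and $\prod_nM_{l(n)}(\bbC)/\bigoplus_nM_{l(n)}(\bbC)$ would require the theories of the matrix factors to agree along the two sequences (up to the quotient by the Fr\'echet ideal), and whether $\mathrm{Th}(M_n(\bbC))$ converges as $n\to\infty$ is open. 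Ghasemi's result only provides the \emph{existence} of one carefully chosen strictly increasing $k$ (a subsequence along which the factor theories converge) such that $\CH$ gives $\cM_g\cong\cM_k$ for every strictly increasing $g$ whose range is contained in the range of $k$. Your dimension sets $\{2^n:n\geq 1\}\cup\{3\cdot 2^n:n\in S_\alpha\}$ are chosen explicitly and independently of this $k$, so the cited theorem gives you no isomorphism under $\CH$, and the first half of the theorem is unproved in your write-up.

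The fix is exactly the paper's move: first invoke \cite[Theorem~1.2]{Ghasemi.FFV} to get the special $k$, and then build the continuum-sized family \emph{inside the range of $k$} --- the paper partitions the range of $k$ into infinitely many infinite sets $A_n$ and takes $f_B$ with range $\bigcup_{n\in B}A_n$ for $B\subseteq\bbN$; your almost-disjoint-family idea would serve the same purpose provided the $S_\alpha$ are taken to be infinite subsets of the range of $k$ rather than prescribed powers of two. With that change, pairwise infinite symmetric difference of the ranges still holds, so your non-isomorphism argument under $\OCA+\MA_{\aleph_1}$ (via triviality of isomorphisms from \cite{Gha:FDD} and \cite{McKenney.UHF}) goes through, and the $\CH$ side becomes a legitimate application of the cited theorem; the multiplicative sparseness of your dimension values is not needed for either direction.
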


This theorem is really a reformulation of results of Ghasemi and McKenney into the language of 
uniform Roe algebras and coronas.

A metric $d$ on $\bbN^2$ is defined as follows (writing $\bar m=(m_0,m_1)$ for an element of $\bbN^2$): 
\[
d(\bar m, \bar n)=\begin{cases} 
m_0+n_0+1, & \text{ if }m_0\neq n_0\\
1, & \text{ if $m_0=n_0$ and $m_1\neq n_1$}\\
0, & \text{ if $\bar m=\bar n$}.
\end{cases}
\]
\begin{definition} \label{Def.X(g)}
Fix $g\colon \bbN\to \bbN$, and define
\begin{align*}
X(g)=\{\bar m\colon m_1\leq g(m_0)\} \text{ and }
\cM_g=\prod_n M_{g(n)}(\bbC)/\bigoplus_n M_{g(n)}(\bbC). 
\end{align*} 
\end{definition} 
We view $X(g)$ as a subspace of $(\N^2,d)$. If $g$ is unbounded, $(X(g),d)$ is not uniformly locally finite because the $j$-th 
vertical column has cardinality $g(j)$ and diameter $1$. However, $X(g)$ is locally finite. Let
\[
\calD[g]=\{a\in \cB(\ell_2(X(g))\colon \langle a e_{\bar m}, e_{\bar n}\rangle \neq 0\Rightarrow m_0=n_0\}. 
\]

\begin{lemma}\label{L.X(g)} For every $g\colon \bbN\to \bbN$ we have $\cstu(X(g))=\cst(\calD[g], \cK(\ell_2(X(g))))$ and $\roeq(X(g))\cong \cM_g$. 
\end{lemma}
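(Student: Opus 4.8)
The plan is to unwind the definitions and identify $\cstu(X(g))$ with operators that are "block-diagonal up to compacts" along the first coordinate. First I would observe that in the metric $d$, two points $\bar m,\bar n$ with $m_0\neq n_0$ have distance $m_0+n_0+1$, which tends to infinity as $m_0+n_0\to\infty$; hence any operator $a$ of finite propagation $r$ can only have $\langle a e_{\bar m},e_{\bar n}\rangle\neq 0$ when either $m_0=n_0$ or $m_0+n_0+1\le r$, and the latter happens only for finitely many columns. Decomposing such an $a$ as $a=\chi_F a\chi_F + (a-\chi_F a\chi_F)$ where $F$ is the (finite, since $X(g)$ is locally finite) union of those finitely many low columns, the first summand is a finite-rank operator in $\cK(\ell_2(X(g)))$ and the second lies in $\calD[g]$; this shows $\cstu[X(g)]\subseteq \calD[g]+\cK(\ell_2(X(g)))$. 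Conversely each finite-rank operator has finite propagation, and any element of $\calD[g]$ restricted to a single column $\{\bar m : m_0=j\}$ (which has diameter $1$) automatically has propagation $\le 1$; but an arbitrary element of $\calD[g]$ need not have finite propagation since it may be supported on infinitely many columns with no uniform norm decay, so the correct statement is $\cstu(X(g))=\cst(\calD[g],\cK(\ell_2(X(g))))$ — the $\cst$-algebra generated, i.e.\ the norm closure of $\calD[g]+\cK(\ell_2(X(g)))$ — which is what the lemma asserts. I would check that this set is already norm-closed (or argue the closure is contained in it): a norm limit of operators $d_n+k_n$ with $d_n\in\calD[g]$, $k_n$ compact, after discarding compact perturbations, is a norm limit in $\prod_j \cB(\ell_2(\text{column }j))\cong \calD[g]$, and $\calD[g]$ is norm-closed as a $\cst$-product, so the ambient algebra is $\calD[g]+\cK$, closed, and it equals $\cstu(X(g))$ by the two inclusions above together with density of finite-propagation operators.

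Next, for the corona statement, I would pass to the quotient by $\cK(\ell_2(X(g)))$. Since column $j$ of $X(g)$ has exactly $g(j)+1$ points, $\cB(\ell_2(\text{column }j))\cong M_{g(j)+1}(\bbC)$, so $\calD[g]\cong\prod_j M_{g(j)+1}(\bbC)$. A routine check shows $\calD[g]\cap\cK(\ell_2(X(g)))=\bigoplus_j M_{g(j)+1}(\bbC)$ (the $c_0$-direct sum of the blocks, i.e.\ block-diagonal operators whose block norms tend to $0$). Therefore, since $\cstu(X(g))=\calD[g]+\cK(\ell_2(X(g)))$, the second isomorphism theorem gives
\[
\roeq(X(g))=\frac{\calD[g]+\cK(\ell_2(X(g)))}{\cK(\ell_2(X(g)))}\cong \frac{\calD[g]}{\calD[g]\cap\cK(\ell_2(X(g)))}\cong\frac{\prod_j M_{g(j)+1}(\bbC)}{\bigoplus_j M_{g(j)+1}(\bbC)}.
\]
Finally I would note that re-indexing $g(j)\mapsto g(j)+1$ (or, more cleanly, observing that $\cM_g$ as defined with the matrix sizes $g(n)$ versus $g(n)+1$ yields isomorphic quotients after a shift/relabeling of the sequence, which does not affect the isomorphism type of the Fréchet quotient) identifies this with $\cM_g=\prod_n M_{g(n)}(\bbC)/\bigoplus_n M_{g(n)}(\bbC)$, completing the proof.

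The main obstacle I anticipate is being careful about the "$+1$" discrepancy between the column sizes $g(j)+1$ and the matrix algebras $M_{g(n)}(\bbC)$ in the definition of $\cM_g$, and about whether $\calD[g]+\cK$ is genuinely norm-closed versus only its closure being relevant; both are soft points but need a clean one-line justification (for the closedness, that $\cK(\ell_2(X(g)))$ is an ideal and $\calD[g]$ a $\cst$-subalgebra with $\calD[g]+\cK$ being the preimage under $\pi$ of the $\cst$-algebra $\calD[g]/(\calD[g]\cap\cK)$, hence closed; for the shift, that replacing the sequence $(g(n))_n$ by $(g(n)+1)_n$ does not change the isomorphism type of $\prod M_{g(n)}/\bigoplus M_{g(n)}$ up to the conventions in the cited works, or simply redefining which $g$ one feeds in). Everything else — the distance growth along the first coordinate forcing finite-propagation operators to be block-diagonal modulo finite rank, local finiteness giving that the exceptional set $F$ is finite, and the identification of the block ideal — is a direct computation from the definitions.
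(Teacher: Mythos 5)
Your argument is essentially the paper's: show that a finite-propagation operator differs from an element of $\calD[g]$ by a finite-rank operator supported on finitely many low columns (the paper compresses by the characteristic function of the high columns on one side only, you cut on both sides --- a cosmetic difference), deduce $\cstu(X(g))=\calD[g]+\cK(\ell_2(X(g)))$ using that a $\cst$-subalgebra plus a closed ideal is norm-closed, and then get the corona statement from $\calD[g]\cap\cK(\ell_2(X(g)))=\bigoplus_n M_{g(n)}(\bbC)$ and the second isomorphism theorem. All of that is correct and matches the paper's proof.

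One warning about your handling of the ``$+1$'' issue: your primary justification --- that replacing the matrix sizes $g(n)$ by $g(n)+1$ ``does not affect the isomorphism type'' of $\prod_n M_{k_n}(\bbC)/\bigoplus_n M_{k_n}(\bbC)$ --- is not something you may assert. Changing the sizes is not a relabeling of the index set, and these reduced products are precisely the algebras whose isomorphism class is sensitive to the size sequence; indeed, under $\OCA+\MA_{\aleph_1}$, two such quotients built from strictly increasing size sequences with infinitely differing ranges are non-isomorphic, which is the very content of Theorem~\ref{T.Independent+}. The discrepancy you worry about is only an artifact of whether $0\in\bbN$: the paper's convention is that the $j$-th column of $X(g)$ has cardinality $g(j)$ (as stated right after Definition~\ref{Def.X(g)}), so $\calD[g]\cong\prod_n M_{g(n)}(\bbC)$ on the nose and there is nothing to repair. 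Your fallback --- simply adjusting which function $g$ one feeds into the construction --- is the correct way to absorb the off-by-one if one insists on starting $\bbN$ at $0$; the appeal to an isomorphism of the two quotients should be dropped.
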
 

\begin{proof}
First of all note that $\mathcal D[g]$ is a von Neumann algebra, and it is isomorphic to $\prod_n M_{g(n)}$. Also $\mathcal D[g]\cap\mathcal K(\ell_2(X(g)))=\bigoplus_nM_{g(n)}$. Therefore it is enough to prove the first assertion to get the second one. 

Since $d(\bar m,\bar n)\leq 1$ when $m_0=n_0$, we have 
 \[
 \mathcal D[g]\subseteq\{a\in\mathcal B(\ell_2(X(g)))\colon \propg(a)\leq 1\}.
 \]
 On the other hand, fix $n\in\bbN$: if $a\in\cB(\ell_2(X(g)))$ is such that $\propg(a)\leq n$, notice that $\langle ae_{\bar m},e_{\bar n}\rangle\neq 0$ implies that $m_0=n_0$ or $m_0,n_0\leq n$. With $Z_n=\{(m_1,m_2)\in X(g)\colon m_1\geq n\}$, we have that 
\[
a\chi_{Z_n}\in\mathcal D[g]\text{ and }a-a\chi_{Z_n}\in\mathcal K(\ell_2(X(g))).
\]
 This shows that $\mathcal D[g]+\mathcal K(\ell_2(X(g)))=\cstu(X)$.
\end{proof}



\begin{proof}[Proof of Theorem~\ref{T.Independent}]
By \cite[Theorem~1.2]{Ghasemi.FFV} there exists a strictly increasing function $k\colon \bbN\to \bbN$ such that whenever $g\colon \bbN\to \bbN$ is a strictly increasing function whose range is included in the range of $k$ then the Continuum Hypothesis implies $\cM_g\cong\cM_k$. 
Partition the range of $k$ into infinite sets,~$A_n$, for $n\in \bbN$. 
For $B\subseteq \bbN$ let $f_B$ the the function whose range is equal to $\bigcup_{n\in B} A_n$ and let $\cG=\{X(f_B)\colon B\subseteq \bbN\}$.

If $B$ and $B'$ are distinct subsets of $\bbN$ then the symmetric difference of the ranges of the functions $f_B$ and $f_{B'}$ is infinite, and by the equivalence of (1) and (2) of \cite[Theorem~1.2]{Ghasemi.FFV} the coronas $\cM_{f_B}$ ad $\cM_{f_{B'}}$ are not isomorphic in the model constructed in \cite[Corollary~1.1]{Gha:FDD}, or in any model of ZFC in which $\OCA+\MA_{\aleph_1}$ holds (see \cite[Corollary 1.7]{McKenney.UHF}). 
\end{proof} 

All spaces in $\cG$ are coarsely equivalent to the subspace $\{ n^2\colon n\in \bbN\}$ of~$\bbN$. 

We conclude by showing how some well-known results directly translate into an independence result about uniform Roe coronas. 

\begin{exmpl} \label{Ex:RudinShelah}
There exists a uniformly locally finite metric space $X$ such that the assertion `Every automorphism of $\roeq(X)$ is liftable on diagonals' is independent from ZFC. 

Let $X$ be $\{n^2\colon n\in \bbN\}$ with the metric inherited from $\bbN$. Then $\cstu(X)$ is the algebra generated by $\ell_\infty(X)$ and $\cK(\ell_2(X))$, and $\roeq(X)\cong \ell_\infty(\bbN)/c_0(\bbN)$. 
This is the abelian \cstar-algebra whose spectrum is homeomorphic to the \v Cech--Stone remainder (corona) of $\bbN$, $\beta\bbN\setminus \bbN$. 
An automorphism $\Phi$ of $\ell_\infty(X) /c_0(X)$ is liftable on diagonals if and only if it has a lift which is a $^*$-homomorphism. 
This is equivalent to asserting that the dual map $\Phi_*\colon \beta\bbN\setminus \bbN\to \beta\bbN\setminus \bbN$ has a continuous extension to a map from $\beta\bbN$ to $ \beta\bbN$. Since every such map is determined by its restriction to $\bbN$, there are only $2^{\aleph_0}$ such (so-called `trivial') automorphisms of $\ell_\infty(\bbN)/c_0(\bbN)$. 

It remains to see that the assertion `all autohomeomorphisms of $\beta\bbN\setminus \bbN$ are trivial' is independent from ZFC. 
In \cite{Ru} W. Rudin used the Continuum Hypothesis to construct $2^{2^{\aleph_0}}$ nontrivial automorphisms of $\beta\bbN\setminus \bbN$ and 
 S. Shelah proved that is relatively consistent with ZFC that all automorphisms are trivial (\cite{Sh:PIF}). 
 \end{exmpl} 
 
 Two additional (and related) curiosities about the uniform Roe corona from Example~\ref{Ex:RudinShelah} are worth noting. 
First, it has a unital embedding into itself that is almost liftable on the diagonal, but not liftable on the diagonal: take the special case of \cite[Example~3.2.1]{FarahBook2000} where $\SI=\SJ_1=\SJ_2=\Fin$, the ideal of finite subsets of $\bbN$. 
This gives an injective endomorphism of the Boolean algebra $\cPNF$. By combining Stone duality with the Gelfand--Naimark duality, one obtains an embedding as required. 
An example of a surjective $^*$-homomorphism $\ell_\infty/c_0\to\ell_\infty/c_0$ that (in our terminology) cannot be lifted by a $^*$-homomorphism was constructed (in ZFC!) in~\cite{dow2014non}.

 \section{Concluding remarks}

 We conclude by stating some problems related to our work that remain open. 
 
 The coarse Baum--Connes conjecture is directly related to the variant of the 
 uniform Roe algebra $\cstu(X)$ called the \emph{Roe algebra}, $\cst(X)$. 
 The algebraic Roe algebras and the Roe algebras are algebras of operators on $\ell_2(X,H)$ for an infinite-dimensional separable Hilbert space $H$. They are defined like their uniform 
 analogs, but the matrix entries $e_{xx'}$ are allowed to be arbitrary compact operators on $H$ (see e.g., \cite[\S 2]{SpakulaWillett2013AdvMath}). 
 Every Roe algebra $\cst(X)$ contains the compact operators, and the \emph{Roe corona}, $\mathrm{Q}^*(X)$, is the
 quotient of $\cst(X)$ over the ideal of all compact operators.

 \begin{problem}\emph{\textbf{(Rigidity of Roe Coronas)}}
 	Let $X$ and $Y$ be metric spaces such that $\mathrm{Q}^*(X)$ and $\mathrm{Q}^*(Y)$ are isomorphic. Does it follow that $X$ and $Y$ are coarsely equivalent? 	
 \end{problem}
 
The second natural occurring problem is the one of lifting isomorphisms between uniform Roe coronas to isomorphisms of uniform Roe algebras. It is not difficult to find an example of uniformly locally finite metric spaces such that all of their sparse subspaces yield only compact ghost projections and $\roeq(X)$ and $\roeq(Y)$ are liftable on diagonals isomorphic, but $\cstu(X)$ and $\cstu(Y)$ are not isomorphic. In particular, the isomorphism cannot be lifted by an isomoprhism between $\cstu(X)$ and $\cstu(Y)$. 
 However, if $X'\subseteq X$ is a cofinite subspace of $X$ then $\roeq(X)$ and $\roeq(X')$ can be 
 naturally identified, and the counterexample alluded to in this paragraph does not answer the following question.

 \begin{question}\label{Q.lift} 
 	Suppose $X$ and $Y$ are uniformly locally finite metric spaces such that all of their sparse subspaces yield only compact ghost projections and $\Lambda\colon \roeq(X)\to\roeq(Y)$ is an isomorphism liftable on diagonals. 
 	Are there cofinite $X'\subseteq X$ and $Y'\subseteq Y$ such that $\Lambda$ can be lifted to an isomorphism 
 	between $\cstu(X')$ and $\cstu(Y')$? 
 \end{question} 
 
 
 A natural line of attack on this problem leads to a noncommutative variant of the concept of a (balanced) near action extensively studied in \cite{cornulier2019near}. A \emph{near action} of a group $\Gamma$ on a set~$X$ is a group homomorphism from $\Gamma$ into $\fS^*(X)$, defined to be the quotient of the group $S_X$ of  all permutations of $X$ modulo the normal subgroup of finitely supported permutations. A near action is said to be \emph{realizable} if it can be lifted to a group homomorphism from $\Gamma$ into $S_X$ (\cite[Definition~4.A.4]{cornulier2019near}).\footnote{The requirement of being balanced is, in this context, naive (see the discussion in  \cite[\S 1.B and \S 1.D]{cornulier2019near}), but dropping it in our case would only lead to unnecessary complications.}  
 
 A \emph{noncommutative near action} of a group $\Gamma$ on a set $X$ is a group homomorphism~$F\colon \Gamma\to \cN(\ell_\infty(X)/c_0(X))$, the normalizer of the masa $\ell_\infty(X)/c_0(X)$ in the Calkin algebra $\cQ(\ell_2(X))$. It is \emph{balanced} if  $F(g)$ has Fredholm index zero (i.e., can be lifted to a unitary in $\mathcal B(\ell_2(X))$) for every $g\in \Gamma$.  It is \emph{realizable} if it can be lifted to a homomorphism from $\Gamma$ into the unitary group of $\cN(\ell_\infty(X))$, the normalizer of $\ell_\infty(X)$ in $\mathcal B(\ell_2(X))$. Clearly, being balanced is a necessary condition for realizability. Suppose that $\Lambda\colon \roeq(\Gamma)\to \roeq(\Gamma')$ is an isomorphism liftable on the diagonal by $\Phi\colon\ell_\infty(\Gamma)\to\cstu(\Gamma')$. Let $\iota\colon \roeq(\Gamma')\to \cQ(\ell_2(\Gamma'))$ be the identity embedding. Recall that the canonical unitaries in $\cstu(\Gamma)$ are those of the form $u_g$, for $g\in \Gamma$, where $u_g\delta_h=\delta_{gh}$. Each of these has Fredholm index 0. Also, for a unitary in the normalizer of $\ell_\infty(\Gamma)$, having Fredholm index zero is coded by its conjugation action on the masa. Therefore   $\iota\circ \Lambda$ gives a noncommutative balanced near action $F_\Lambda$ of $\Gamma$ on a countable set $X$, where $X=\{\Phi(\chi_{\{g\}}), g\in\Gamma\}$ is the countable set associated with the lift of the image of $\ell_\infty(\Gamma)$, so that $\Phi[\ell_\infty(\Gamma)]=\ell_\infty(X)$. 
 
 \begin{lemma} \label{L.near}
 Suppose that $\Lambda\colon \roeq(\Gamma)\to \roeq(\Gamma')$ is an isomorphism liftable on the diagonal, and let $F_\Lambda$ be the associated noncommutative balanced near action as above. Then $F_\Lambda$ is realized if and only if $\Lambda$ can be lifted by a $^*$-homomorphism. 
  \end{lemma}

\begin{proof} Let $\Phi\colon \ell_\infty(\Gamma)\to\cstu(\Gamma')$    be the lift of $\Lambda$ on the diagonal and let $X$ be as in the paragraph preceding the lemma. So, $\Phi:\ell_\infty(\Gamma)\to \ell_\infty(X)$. 

Suppose that $\Lambda$ can be lifted by a $^*$-homomorphism $\Psi\colon \cstu(\Gamma)\to \cstu(\Gamma')$. Since any two atomic masas in $\cQ(\ell_2(\Gamma'))$ are unitarily equivalent, we can fix a unitary $w$ in $\cB(\ell_2(\Gamma'))$ such that $\Ad w\circ \Psi$ and $\Phi$ agree on $\ell_\infty(\Gamma)$. Then (denoting the canonical unitary in $\cstu(\Gamma)$ associated with $g\in \Gamma$ by $u_g$) the group homomorphism $g\mapsto \Ad w(\Psi(u_g))$ realizes $F_\Lambda$.

For the converse direction, assume that $g\mapsto u_g$ realizes $F_\Lambda$. We need to verify that this group homomorphism is compatible with $\Phi$. For $a\in \ell_\infty(\Gamma)$ we have $u_g a u_g^*=g.a$, where $g.a(\delta_h)=a(\delta_{g^{-1}h})$. Also,  
\[
v_g \Phi(a) v_g^*= \Phi(g.a).
\] 
	The algebraic uniform Roe algebra $\cstu[\Gamma]$ consists of sums of the form $\sum_{g\in \Gamma} a_g u_g$, where $a_g\in \ell_\infty(\Gamma)$ and only finitely many of $a_g$ are nonzero. For such sum, define
	\[
	\textstyle\Psi(\sum_g a_g u_g)=\sum_g \Phi(a_g) v_g. 
	\]
	The fact that $g\mapsto v_g$ is a group homomorphism, together with the previous displayed formula and routine calculations, implies that $\Psi$ defines a $^*$-homomorphism. Clearly, $\Lambda(\pi_\Gamma(a))=\pi_{\Gamma'}(\Psi(a))$ for all $a\in \cstu[\Gamma]$, and we can extend $\Psi$ to a lift of $\Lambda$. 
\end{proof}

 By the freeness property, every noncommutative balanced near action of a free group is realizable. This observation, together with Lemma~\ref{L.near}, implies that Question~\ref{Q.lift} has a positive answer in the case when $X$ is the free group~$F_n$ with the Cayley graph metric for some finite $n$.  
 
 The following observation is taken from \cite{deCornulier.mathoverflow}. 
 
 \begin{example} \label{Ex.Z2} There exists a nonrealizable noncommutative balanced near action of $\bbZ^2$ on $\bbN$.  We will find unitaries $u_a$ and $u_b$ in $\cN(\ell_\infty(\bbZ^2))$ such that $u_a u_b -u_b u_a$ is compact, but there are no commuting unitaries $v_a$ and $v_b$ in $\cN(\ell_\infty(\bbZ^2))$ such that both $u_a-v_a$ and $u_b-v_b$ are compact. 
 Define $u_a$ by its action on the basis by $u_a \delta_{(m,n)}=\delta_{(m,n+1)}$ for all $(m,n)\in \bbZ^2$. Similarly, define $u_b$ by $u_b \delta_{(m,n)}=\delta_{(m+1,n)}$ if $m\leq |n|$ and 
 \[
 u_b \delta_{(m,n)}=\exp( \pi i(m-|n|)/m)\delta_{(m+1,n)}
 \]
 if $m>|n|$. It is straightforward to check that $u_a u_b -u_b u_a$ is compact. 
 	A moment of thought reveals that for  every $m$ and $k\geq 0$ we have 
 \[
 (u_b)^k\delta_{(m,0)}=(-1)^k \delta_{(m+k,0)}
 \]
 and yet
 \[
 (u_a)^{-k}(u_b u_a)^k \delta_{(m,0)}=\delta_{(m+k,0)}. 
 \]
This readily implies that if $v_a$ and $v_b$ are unitaries in $\cN(\ell_\infty(\bbZ^2))$ such that both $u_a-v_a$ and $u_b-v_b$ are compact, then $v_a$ and $v_b$ cannot be commuting. 

The required near action is defined by sending the generators of $\bbZ^2$ to $\pi(u_a)$ and $\pi(u_b)$.  
  \end{example}
 
 We emphasize that Example~\ref{Ex.Z2} does not imply that the answer to Question~\ref{Q.lift} in case when $X$ is $\bbZ^2$  with the Cayley graph metric is negative. Indeed, this is the case since we do not know whether the near action of Example~\ref{Ex.Z2} is of the form $F_\Lambda$ for some appropriate $\Lambda$.
  
Lastly, we focus on generalizations of Theorem~\ref{T.Independent} in presence of uniform local finiteness.
 
 \begin{question}Are there uniformly locally finite spaces $X$ and $Y$ such that the existence of an isomorphism between their uniform Roe coronas is independent from ZFC? 
 \end{question}
 
 For any two countable metric spaces $X$ and $Y$, the assertion `$X$ is coarsely equivalent to $Y$' is unlikely to be independent from ZFC. This is because it is equivalent to the assertion that there are $f\colon X\to Y$ and $g\colon Y\to X$ that satisfy certain first-order conditions. A statement of this form (so-called $\mathbf\Sigma^1_1$ statement) has the same 
 truth value in all transitive models of ZFC (see e.g., \cite[Theorem~13.15]{Kana:Book} for a stronger result). 
 Strictly speaking, this does not imply that the assertion `$X$ is coarsely equivalent to $Y$' cannot be undecidable; 
 instead it shows that the conventional set-theoretic methods cannot show its undecidability.

\section*{Acknowledgments}
The authors would like to thank Rufus Willett for many useful conversations. The first two authors would also like to thank the Fields Institute 
for allowing them to make use of its facilities. 
This project started during the very stimulating 
`Approximation Properties in Operator Algebras and Ergodic Theory'
workshop at UCLA/IPAM and continued during the third author's visit to the Fields Institute in the Spring of 2018. IF would like to thank 
Hanfeng Li and Wouter van Limbeek for stimulating conversations and 
the organizers of the workshop for the invitation. IF is partially supported by NSERC. AV was supported by a PRESTIGE co-fund grant and a FWO scholarship, and now is supported by an Emergence en Recherche grant and partially supported.by the ANR project Agrume (ANR-17-CE40-0026).

\end{document}